\newcommand{\norm}[1]{\left\lVert #1 \right\rVert}
\newcommand{\mor}{\text{Mor}}
\newcommand{\Uelt}[2]{U \left( \substack{#1 \\ #2} \right)}
\newcommand{\Velt}[2]{V \left( \substack{#1 \\ #2} \right)}
\newcommand{\Aelt}[3]{A_{#1} \left( \substack{#2 \\ #3} \right)}
\DeclareMathOperator{\onb}{onb}
\DeclareMathOperator{\mult}{\mathfrak{m}}
\DeclareMathOperator{\id}{Id}
\DeclareMathOperator{\op}{op}
\DeclareMathOperator{\tr}{Tr}
\DeclareMathOperator{\del}{\partial}
\DeclareMathOperator{\ovtimes}{\overline{\otimes}}
\DeclareMathOperator{\mtimes}{\otimes_{(M,\del)}}
\DeclareMathOperator{\rep}{Rep}
\DeclareMathOperator{\End}{End}
\newcommand{\vect}[1]{\mathfrak{v} \left( #1 \right)}
\newtheorem*{rep@theorem}{\rep@title}
\newcommand{\newreptheorem}[2]{%
\newenvironment{rep#1}[1]{%
 \def\rep@title{#2 \ref{##1}}%
 \begin{rep@theorem}}%
 {\end{rep@theorem}}}
\theoremstyle{definition}
\newtheorem{definition}[subsection]{Definition}
\newtheorem{definition/proposition}[subsection]{Definition/Proposition}
\newtheorem{theorem}[subsection]{Theorem}
\newtheorem{lemma}[subsection]{Lemma}
\newtheorem{proposition}[subsection]{Proposition}
\newtheorem{corollary}[subsection]{Corollary}
\newtheorem{remark}[subsection]{Remark}
\newtheorem{example}[subsection]{Example}
\newtheorem{conjecture}[subsection]{Conjecture}
\title{Equivariant representation theory for proper actions on discrete spaces}
\author[1,2]{Lukas Rollier \thanks{email: lukasrollier@live.be} \thanks{Partly supported by FWO research project G090420N of the Research Foundation Flanders and by FWO-PAS
research project VS02619N}}
\date{}
\begin{document}

\maketitle

\begin{abstract}
    Starting from any proper action of any locally compact quantum group on any discrete quantum space, we show that its equivariant representation theory yields a concrete unitary $2$-category of finite type Hilbert bimodules over the discrete quantum space, from which the quantum group and its action may be completely reconstructed as in \cite{Rollier2025}. In particular, this shows that any locally compact quantum group acting properly on a discrete quantum space must be an algebraic quantum group.
\end{abstract}

\section{Introduction} \label{sc: introduction}

In the theory of compact quantum groups, special interest is given to their representation categories. As shown by Woronowicz \cite{Woronowicz88}, similarly to the case of classical compact groups, every representation of a compact quantum group is equivalent to a direct sum of irreducible, finite dimensional, unitary representations. Moreover, every representation has a categorical conjugate, which goes to show that a lot of the algebraic properties of the compact quantum group are reflected in its representation category. The content of Woronowicz' Tannaka-Krein duality is then precisely that the only information missing from the representation category is contained in its canonical fiber functor. 

For more general locally compact quantum groups, the situation is not as convenient. The central tool which allows for the detailed study of the representation category in the compact setting is the Haar state, which does not exist in the general locally compact setting. Nonetheless, there are always Haar weights, so one could hope to exploit these for a similar study. Still, it is well-known that the existence of a categorical conjugate to a representation of a locally compact quantum group forces the representation to be finite dimensional, and finite dimensional representations can only ever remember the Bohr compactification of a locally compact quantum group. 

Expanding our attention not just to locally compact quantum groups, but their actions as well, more is possible. In \cite{DeCommerDeRo2024}, De Commer and De Ro introduced the equivariant representation category of an action of a locally compact quantum group on a von Neumann algebra. This notion was then used to study approximation properties of such actions. Importantly, the notion of an equivariant representation category is a true generalisation of the ordinary representation category. Indeed, every representation of a locally compact quantum group is equivariant with respect to the trivial action on $\mathbb{C}$.

Recently in \cite{Rollier2025}, the author showed that for any discrete quantum space $(M,\del)$, as in definition \ref{def: discrete quantum space}, and a sufficiently well-behaved unitary $2$-category of finite type Hilbert-$M$-bimodules $\mathcal{C}$, it is possible to construct an algebraic quantum group $\mathbb{G}$ which admits an action $\alpha$ on $M$ preserving $\del$, such that the $\alpha$-equivariant representation theory of $\mathbb{G}$ is precisely given by $\mathcal{C}$. Hence, at least some actions can be reconstructed from their equivariant representation theory. The natural question then arises: which ones?

All of the actions resulting from the reconstruction in \cite{Rollier2025} were proper in the sense of definition \ref{def: generators of faithful part + proper action}. This properness may be seen as an equivariant version of compactness, and in light of the non-equivariant setting, it is then natural to restrict our attention to this class of actions. The current article shows that properness of an action on a discrete quantum space is sufficient in order for its representation theory to abide by the conditions of the reconstruction in \cite{Rollier2025}. In particular, given such an action of any locally compact quantum group $\mathbb{G}$, we are able to find a canonical strongly dense $*$-subalgebra $\mathcal{O}(\mathbb{G}) \subset L^{\infty}(\mathbb{G})$ which lies in the domain of both Haar weights, and such that the restriction of the coproduct to $\mathcal{O}(\mathbb{G})$ turns it into an algebraic quantum group in the sense of \cite{VanDaele98, KustermansVanDaele97}.

In \cite{LandstadVanDaele2008}, Landstad and Van Daele show that a classical locally compact group has a description as an algebraic quantum group if and only if it contains a compact open subgroup. In this case, the canonical action of the group on cosets of this compact open subgroup by translation is a proper action on a discrete space, and conversely for any proper action on a discrete space of a locally compact group, the stabiliser of any point yields a compact open subgroup. The question now arises whether this property lifts to all locally compact quantum groups, and it is tempting to believe the following conjecture.

\begin{conjecture}
    For any locally compact quantum group $\mathbb{G}$, the following are equivalent.
    \begin{enumerate}
        \item There exists a discrete quantum space $(M,\del)$ in the sense of definition \ref{def: discrete quantum space} and a proper $\del$-preserving right action $\alpha$ of $\mathbb{G}$ on $M$ in the sense of definition \ref{def: generators of faithful part + proper action}.
        
        \item There exists a strongly dense $*$-subalgebra $\mathcal{O}(\mathbb{G}) \subset L^{\infty}(\mathbb{G})$ which lies in the domain of the Haar weights, such that the restriction of the coproduct to $\mathcal{O}(\mathbb{G})$ yields an algebraic quantum group in the sense of \cite{VanDaele98, KustermansVanDaele97}.
    \end{enumerate}
\end{conjecture}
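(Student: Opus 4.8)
The plan is to treat the two implications separately: the forward direction $(1) \Rightarrow (2)$ is exactly the content of the present article, while the reverse $(2) \Rightarrow (1)$ is where the genuine difficulty — and the reason this remains a conjecture — resides.

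For $(1) \Rightarrow (2)$, I would invoke the main theorem directly. Given a proper $\del$-preserving action $\alpha$ of $\mathbb{G}$ on a discrete quantum space $(M,\del)$, the $\alpha$-equivariant representation theory assembles into a concrete unitary $2$-category $\mathcal{C}$ of finite type Hilbert-$M$-bimodules. The whole point of the paper is that properness forces $\mathcal{C}$ to satisfy the finiteness and nondegeneracy axioms demanded by the reconstruction theorem of \cite{Rollier2025}; feeding $\mathcal{C}$ into that reconstruction produces an algebraic quantum group with an action recovering $\mathcal{C}$, and since reconstruction inverts the passage to equivariant representations, this object is (isomorphic to) $\mathbb{G}$ itself. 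It comes equipped with precisely the strongly dense $*$-subalgebra $\mathcal{O}(\mathbb{G}) \subset L^{\infty}(\mathbb{G})$, lying in the domain of both Haar weights and closed under the coproduct as an algebraic quantum group in the sense of \cite{VanDaele98, KustermansVanDaele97}, that $(2)$ asks for. The only nonformal input is the verification of the $2$-categorical axioms from properness, which is the technical heart of the article.

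For $(2) \Rightarrow (1)$, which I expect to be the main obstacle, the guiding picture is the classical equivalence of \cite{LandstadVanDaele2008}: a locally compact group admits an algebraic quantum group description iff it contains a compact open subgroup $K$, in which case the translation action of $G$ on the discrete coset space $G/K$ is proper. The plan would be to manufacture a quantum analogue of this homogeneous space directly from the algebraic data $\mathcal{O}(\mathbb{G})$. Concretely, I would try to extract from $\mathcal{O}(\mathbb{G})$ a \emph{compact open quantum subgroup} — encoded either by an idempotent state or, more in the spirit of the multiplier Hopf algebra framework of \cite{LandstadVanDaele2008, VanDaele98}, by a suitable group-like projection in the multiplier algebra — and then define $M$ as the associated homogeneous space, $\del$ as its induced discrete structure, and $\alpha$ as the translation action.

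The hard part is twofold. First, there is no a priori reason that an arbitrary algebraic quantum group structure on $L^{\infty}(\mathbb{G})$ should arise from such a group-like projection; one must either produce the projection canonically from $\mathcal{O}(\mathbb{G})$ or bypass it entirely, perhaps building the discrete quantum space from a Pontryagin-type dual of $\mathcal{O}(\mathbb{G})$ whose discreteness mirrors the compact openness of $K$. Second, even granting the space, one must verify that the resulting action is proper in the strong sense of definition \ref{def: generators of faithful part + proper action} and that $\mathcal{O}(\mathbb{G})$ is genuinely recovered from its equivariant representation theory, thereby closing the loop with $(1) \Rightarrow (2)$. Since classically many non-conjugate compact open subgroups can coexist, the correspondence between algebraic structures and proper actions should be many-to-one, so isolating the right canonical construction — rather than merely some construction — is, I expect, the principal conceptual difficulty.
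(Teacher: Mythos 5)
You correctly identify the logical status of the statement: it is a conjecture, the paper proves only the implication $1. \Rightarrow 2.$, and the reverse implication is genuinely open --- the paper itself only remarks that $2. \Rightarrow 1.$ holds for classical groups by \cite{LandstadVanDaele2008} and for compact and discrete quantum groups via explicit actions. Your sketch for $2. \Rightarrow 1.$ (extracting a compact open quantum subgroup, an idempotent state, or a group-like projection from $\mathcal{O}(\mathbb{G})$) is a reasonable research plan and does not conflict with anything in the paper, but it is, as you say yourself, not a proof, and it cannot be graded as one.

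The genuine problem is in your route for $1. \Rightarrow 2.$, which is circular as written. You propose to feed $\rep_f^{\alpha}(\mathbb{G})$ into the reconstruction of \cite{Rollier2025} and then conclude that, ``since reconstruction inverts the passage to equivariant representations,'' the reconstructed algebraic quantum group is $\mathbb{G}$ itself and hence supplies the dense subalgebra of $L^{\infty}(\mathbb{G})$. That inversion statement is not available at this point: \cite[Theorem 3.0.1]{Rollier2025} produces \emph{some} algebraic quantum group whose equivariant representation theory is the given category, but it does not assert that every locally compact quantum group with that equivariant representation theory is isomorphic to the reconstructed one. A priori the reconstructed object $\mathcal{A}$ is abstract, with no embedding into $L^{\infty}(\mathbb{G})$ whatsoever, whereas condition 2.\ demands a strongly dense $*$-subalgebra \emph{of} $L^{\infty}(\mathbb{G})$. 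The paper proceeds in the opposite order: it defines $\mathcal{O}(\mathbb{G})$ directly inside $L^{\infty}(\mathbb{G})$ as the span of coefficients $\Uelt{\xi}{\eta}$ of finite type unitary $\alpha$-equivariant representations (definition \ref{def: O(G)}), and proves by hand --- using the relative tensor product and conjugate representations to get the $*$-algebra structure, proposition \ref{prop: corep into equivariant corep} applied to the left regular representation together with lemma \ref{lem: decomposition into finite type} to get strong density, and properness to place everything in the domain of the Haar weights --- that $(\mathcal{O}(\mathbb{G}),\Delta)$ is an algebraic quantum group (theorem \ref{thm: O(G) is dense *-subalgebra}). Only afterwards, in corollary \ref{cor: uniqueness for equivariant tannaka-krein}, is the uniqueness statement you invoke established, and its proof relies on the surjection $\mathcal{A} \to \mathcal{O}(\mathbb{G})$, i.e.\ on the direct construction already being in place. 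So the step you take for granted is a consequence of the theorem you are trying to prove, not an input to it; to repair your argument you would have to carry out (or cite) the direct construction of $\mathcal{O}(\mathbb{G})$ as the paper does.
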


Note that the implication $1. \Rightarrow 2.$ is shown in this paper, so the conjecture only concerns the reverse direction. Also, the direction $2. \Rightarrow 1.$ holds for all classical groups by \cite{LandstadVanDaele2008}, and for all compact and discrete quantum groups. Indeed, when $\mathbb{G}$ is compact, one can take the trivial action on $\mathbb{C}$, and when $\mathbb{G}$ is discrete, one can take the canonical action on $(L^{\infty}(\mathbb{G}),\psi)$ given by the coproduct, where $\psi$ is the right Haar weight, which can always be normalised to be a delta-form.

The structure of this article is as follows. In section \ref{sc: preliminaries}, we recall some preliminaries on discrete quantum spaces and locally compact quantum group actions on them. In section \ref{sc: properties of the action}, we define what it means for such an action to be proper, and we prove some nice properties that proper actions on discrete spaces enjoy. In section \ref{sc: equivariant representation theory}, we exploit these nice properties to get a description of the equivariant representation theory of such a proper action on a discrete space. This equivariant representation theory admits properties which directly generalise those of the representation theory of compact quantum groups. Finally, in section \ref{sc: algebraic quantum group}, we use this equivariant representation theory to define concretely a strongly dense $*$-subalgebra $\mathcal{O}(\mathbb{G}) \subset L^{\infty}(\mathbb{G})$ such that $(\mathcal{O}(\mathbb{G}), \Delta)$ is an algebraic quantum group. This then allows to strengthen the results from \cite{Rollier2025}.

\subsection*{Acknowledgment}
The author thanks Stefaan Vaes and Kenny De Commer for fruitful discussions, and critical insights. We also thank the Research Foundation Flanders, which partly funded this research by FWO research project G090420N and by FWO-PAS research project
VS02619N.

\section{Preliminaries} \label{sc: preliminaries}

As this article reports on the reverse direction of the reconstruction devised in \cite{Rollier2025}, it can help to read that paper for the general understanding of this one. However, we will not use any of its results until section \ref{sc: algebraic quantum group}. We expect the reader to be familiar with the general theory of locally compact quantum groups as initiated in \cite{KustermansVaes99,KustermansVaes03}. Furthermore, we briefly provide some preliminaries on discrete quantum spaces and actions of locally compact quantum groups on them. These preliminaries are partly taken from \cite{Rollier2025}.

We fix for the entire article a type I von Neumann algebra $M := \ell^{\infty} \bigoplus_{i \in I} M_i$, where $I$ is some index set, and $M_i \cong \mathbb{C}^{d_i \times d_i}$ for some integers $d_i \in \mathbb{N}$. We denote the algebra of finitely supported elements by $M_0 \subset M$, i.e. for every $x \in M_0$, there is some finite set $F \subset I$ such that $x \in \bigoplus_{i \in F} M_i$. Recall that we can write $M_0 \subset M \subset \mathcal{M}(M_0)$, where $\mathcal{M}(M_0)$ is the multiplier algebra, consisting of all possibly unbounded elements of $\prod_{i \in I} M_i$.

In the spirit of noncommutative geometry, we will view $M$ as the algebra of bounded functions on some noncommutative discrete space. The following will then play the role of the counting measure on that space. This was also defined in \cite[Definition 2.1.1]{Rollier2025}.

\begin{definition} \label{def: delta-form}
     We say an nsf weight $\del: M^+ \to [0,\infty]$ is a delta-form if the adjoint of the multiplication $M \otimes M \to M$ with respect to the GNS inner product coming from $\del$ is an isometry. This is equivalent to the existence of a positive invertible element $\sigma = \mathcal{M}(M_0)$, such that $\tr(1_i \sigma^{-1}) = d_i$ and $\del$ restricts to every $M_i$ as $\del_{| M_i}: M_i \to \mathbb{C}: x \mapsto d_i \tr(x \sigma)$.
\end{definition}

\begin{example} \label{ex: markov trace is delta-form}
    Given any direct sum of matrix algebras, $M$, the markov trace $\tr_M: E^i_{k,l} \mapsto \delta_{k,l} d_i$ is the unique tracial delta-form. In particular, when $M = \ell^{\infty}(I)$, the unique delta-form is given by integration against the counting measure.
\end{example}

Fix now for the rest of the article a delta-form $\del$ on $M$. We denote as in definition \ref{def: delta-form} by $\sigma \in \mathcal{M}(M_0)$ the multiplier such that $\del(x) = d_i \tr(x \sigma)$ for any $x \in M_i$. We define a one parameter group of algebra automorphisms\footnote{note that these are only $*$-automorphisms when they are trivial} $\mu^t: M_0 \to M_0: x \mapsto \sigma^{-t} x \sigma^t$. Clearly, this extends to all of $\mathcal{M}(M_0)$. Note however that in general, this extension does not restrict to $M$ itself.

As in \cite[Definition 2.1.3]{Rollier2025}, we now define discrete quantum spaces.
\begin{definition} \label{def: discrete quantum space}
    A discrete quantum space is a pair $(M,\del)$ consisting of a direct sum of matrix algebras $M$, and a delta-form $\del$ on $M$. For our fixed discrete quantum space $(M,\del)$, we denote by $\mathcal{F}_n$ the pre-Hilbert space $M_0^{\otimes n+1}$ with inner product given by
    \[
    \langle \vect{x_0 \otimes \cdots \otimes x_n} , \vect{y_0 \otimes \cdots \otimes y_n} \rangle = \del(y_0^* x_0) \cdots \del(y_n^* x_n).
    \]
    Here we write $\vect{x_0 \otimes \cdots \otimes x_n} \in \mathcal{F}_n$ instead of just $x_0 \otimes \cdots \otimes x_n$ to specify that we mean a vector in a pre-Hilbert space more than an element of an algebra. Then we denote by\footnote{The spaces $L^2(M^{n+1})$ depend, of course, also on $\del$, but we supress this in the notation to keep it lighter.} $L^2(M^{n+1})$ the closure of $\mathcal{F}_n$ with respect to this inner product. Now, as was mentioned in definition \ref{def: delta-form}, $\mult: \mathcal{F}_1 \to \mathcal{F}_0: \vect{x \otimes y} \mapsto \vect{xy}$ extends to a bounded linear map whose adjoint is an isometry.
\end{definition}

One easily checks that there is an easy orthonormal basis of $L^2(M)$ given by
\begin{equation}
    \left\{ \left. d_i^{-1/2} \vect{E^i_{k,l} \sigma^{-1/2}} \right| i \in I \text{ and } k,l \in \{1, \ldots, i\} \right\}. \label{eq: orthonormal basis of F_0}
\end{equation}
Throughout the article, we will often sum over $\onb(\mathcal{F}_0)$, by which we mean any orthonormal basis of $L^2(M)$, which as a set is contained in $\mathcal{F}_0$. We will make sure that the choice of orthonormal basis is of no consequence, and hence the one in \eqref{eq: orthonormal basis of F_0} may always be chosen.

Next, we must speak of bimodules over our discrete quantum space, as in \cite[Definition 2.1.5]{Rollier2025}.

\begin{definition} \label{def: Hilbert-M-bimodule}
    Let $H$ be any Hilbert space, which is equipped with $\lambda_H: M \to B(H)$ and $\rho_H: M^{\op} \to B(H)$ two normal unital $*$-homomorphisms such that $\lambda_H(x) \rho_H(y^{\op}) = \rho_H(y^{\op}) \lambda_H(x)$ for any $x,y \in M$. Then we say $(H,\lambda_H,\rho_H)$ is a Hilbert-$M$-bimodule. We will often just refer to $H$ as the bimodule, and implicitly assume $\lambda_H,\rho_H$. We will denote, for any $\xi \in H$ and $x,y \in M$
    \[
    x \cdot \xi \cdot y :=  \lambda_H(x) \rho_H(y^{\op}) (\xi).
    \]
    If it so happens that for any $x \in M_0$ we get that $\lambda_H(x)$ and $\rho_H(x^{\op})$ are finite rank operators, we say that the Hilbert-$M$-bimodule $H$ is of finite type.
\end{definition}

\begin{example} \label{ex: bimodule structure on F_n}
    The spaces $\mathcal{F}_n$ from definition \ref{def: discrete quantum space} come with a canonical $M$-bimodule structure given by
    \[
    a \cdot \vect{x_0 \otimes \cdots \otimes x_n} \cdot b := \vect{ax_0 \otimes \cdots \otimes x_n \mu^{-1/2}(b)}.
    \]
    One easily checks that this bimodule structure extends to $L^2(M^{n+1})$, making $L^2(M^{n+1})$ a Hilbert-$M$-bimodule for any $n \in \mathbb{N}$. Note that when $n \neq 0$ and $M$ is infinite-dimensional, $L^2(M^{n+1})$ is not of finite type.
\end{example}

\begin{definition} \label{def: relative tensor product}
    In \cite{Connes1980}, Connes defined the relative tensor product of two Hilbert-bimodules $H,K$ over a von Neumann algebra with respect to some weight. In the special case where we take this von Neumann algebra to be $M$ and this weight to be our delta-form, this becomes
    \[
    H \mtimes K := \{ \xi \in H \otimes K | \forall x \in M_0:  (\rho_H(x^{\op}) \otimes 1) \xi = (1 \otimes \lambda_K(\mu^{1/2}(x))) \xi \},
    \]
    which comes equipped with an $M$-bimodule structure $\lambda_{H \mtimes K} := \lambda_H \otimes 1$, $\rho_{H \mtimes K} = 1 \otimes \rho_K$. Note that the relative tensor product of finite type Hilbert-$M$-bimodules is again a finite type Hilbert-$M$-bimodule.
\end{definition}
Recall from \cite[Proposition 2.1.11]{Rollier2025} that the orthogonal projection $H \otimes K \to H \mtimes K$ is given by
\[
P := \sum_{x \in \onb(\mathcal{F}_0)} \rho_H(y^{\op}) \otimes \lambda_K(\mu^{1/2}(x^*)).
\]
We also get from \cite[Proposition 2.1.12]{Rollier2025} that for any Hilbert-$M$-bimodule $H$, there are canonical unitary $M$-bimodular isomorphisms $L^2(M) \mtimes H \cong H \cong H \mtimes L^2(M)$ given by the restrictions of the maps
\begin{align*}
    &H \otimes L^2(M) \to H: \xi \otimes \vect{x} \mapsto \xi \cdot \mu^{1/2}(x) \\
    &L^2(M) \otimes H \to H: \vect{x} \otimes \xi \mapsto x \cdot \xi.
\end{align*}

We finally define right actions on our discrete quantum space. For general actions of locally compact quantum groups, see \cite{Vaes01}.

\begin{definition} \label{def: right action}
    A right action $\alpha$ of a locally compact quantum group $\mathbb{G}$ on the discrete quantum space $(M,\del)$ is a normal, unital, injective $*$-homomorphism $\alpha: M \to M \ovtimes L^{\infty}(\mathbb{G})$ satisfying the action property
    \[
    (\id \ovtimes \Delta) \circ \alpha = (\alpha \ovtimes \id) \circ \alpha,
    \]
    where $\Delta$ denotes the coproduct of $\mathbb{G}$, and which moreover preserves the weight $\del$ in the sense that for any $x$ in the domain of $\del$, and any $\omega \in L^{\infty}(\mathbb{G})_*$, the predual of $L^{\infty}(\mathbb{G})$, we have that $(\id \otimes \omega) \alpha(x)$ lies in  the domain of $\del$ and 
    \[
    \del(\id \otimes \omega)\alpha(x) = \del(x) \omega(1).
    \]
\end{definition}

\section{Properties of the action and the orbit equivalence relation} \label{sc: properties of the action}

We fix for the entirety of this article a locally compact quantum group $\mathbb{G}$ which admits a right action $\alpha$ on a direct sum of matrix algebras $M = \bigoplus_{i \in I} M_i$ such that $\alpha$ preserves a delta-form $\del$. Recall that $\sigma$ denotes the weight of $\del$ with respect to the Markov trace, and that $\mu^t$ denotes conjugation by $\sigma^{-t}$. We denote by $\varphi$ and $\psi$ the left and right Haar weights of $\mathbb{G}$ respectively, normalised such that $\psi = \varphi \circ R$, where $R$ denotes the unitary antipode of $\mathbb{G}$. We also denote by $\alpha^{\op}$ the opposite action on $M^{\op}$, satisfying $\alpha^{\op}(x^{\op}) = (\op \otimes R) \alpha(x)$, where $\op$ denotes the canonical anti-isomorphism $M \to M^{\op}: x \mapsto x^{\op}$.

\begin{definition} \label{def: generators of faithful part + proper action}
    Denoting for any $x,y \in M_0$ the following
    \[
    u_{y,x} := (\del(y^* \cdot ) \otimes \id)\alpha(x) \in L^{\infty}(\mathbb{G})
    \]
    such that
    \[
    \alpha(x) = \sum_{y \in \onb(\mathcal{F}_0)}y \otimes u_{y,x}
    \]
    with convergence in the strong operator topology, we say the action $\alpha$ is proper if the following two conditions hold for any $x,y \in M_0$.
    \begin{itemize}
        \item $u_{x,y} \in C_r(\mathbb{G})$
        \item $u_{x,y}$ lies in the domain of $\varphi$. Here, by the domain of $\varphi$, we mean the linear span of positive elements which have bounded image under $\varphi$.
    \end{itemize}
    Note that for any $i,j \in I$, $u_{1_i,1_j}$ is a positive element of $L^{\infty}(\mathbb{G})$. 
\end{definition}

The reader may note that in the special case where $\mathbb{G}$ is a classical locally compact group, and $M = \ell^{\infty}(I)$ denotes the bounded functions on a classical discrete space, both of the properties defining properness in the sense of definition \ref{def: generators of faithful part + proper action} are equivalent with properness in the ordinary sense of the literature. Indeed, the first implies immediately that the stabiliser of every point in $I$ is compact. The second implies that the stabiliser of any point has a bounded Haar measure. Since these stabilisers are clopen subgroups, this forces them to again be compact, and hence the action is proper in the sense which is more familiar in the literature, justifying our nomenclature. It is unfortunately unclear whether either of these properties implies the other in the quantum setting.

\begin{proposition} \label{prop: calculation on u_(x,y)}
    The following are valid rules of calculation for the elements $u_{x,y} \in L^{\infty}(\mathbb{G})$.
    \begin{enumerate}
        \item For any $x,y \in M_0$, we have $u_{x,y}^* = u_{\mu(x)^*,y^*}$.

        \item For any $x \in M_0$, we have $\sum_{i \in I} u_{1_i,x} = \del(x) 1 = \sum_{i \in I} u_{x^*,1_i}$, with convergence in the strong operator topology.

        \item For any $x,y,z \in M_0$, we have $\sum_{t \in \onb(\mathcal{F}_0)} u_{x,yt} u_{z,t^*} = u_{xz,y} = \sum_{t \in \onb(\mathcal{F}_0)} u_{x,t}u_{z,t^*y}$ and similarly on the other side $\sum_{t \in \onb(\mathcal{F}_0)} u_{xt,y} u_{t^*,z} = u_{x,yz} = \sum_{t \in \onb(\mathcal{F}_0)} u_{t,y}u_{t^*x,z}$.

        \item For any $x,y \in M_0$, we have $\Delta(u_{x,y}) = \sum_{z \in \onb(\mathcal{F}_0)} u_{x,z} \otimes u_{z,y}$.
    \end{enumerate}
\end{proposition}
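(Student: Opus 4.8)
The plan is to read the four identities as expressions of, respectively, multiplicativity of $\alpha$, the $*$-structure, invariance of $\del$, and coassociativity of the action, using throughout two elementary tools. The first is the twisted-trace identity $\del(ab)=\del(b\mu^{-1}(a))$, equivalently $\del(mx)=\del(\mu(x)m)$, which is immediate from $\del|_{M_i}=d_i\tr(\,\cdot\,\sigma)$ and cyclicity of the trace. The second is the reproducing identity $\sum_{t\in\onb(\mathcal{F}_0)}\del(a^*t)\,u_{t,y}=u_{a,y}$ (just the expansion $\alpha(y)=\sum_t t\otimes u_{t,y}$ sliced against $a$), together with the Frobenius formula for the isometry $\mult^*$, namely $\mult^*\vect{x}=\sum_{t}\vect{t}\otimes\vect{t^*x}=\sum_{t}\vect{xt}\otimes\vect{t^*}$; both equalities I would verify by checking $\langle\mult^*\vect{x},\vect{p}\otimes\vect{q}\rangle=\del(q^*p^*x)$ via the modular identity, together with $\sum_t tt^*=1$.

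For item $4$ I would slice the action property $(\id\ovtimes\Delta)\alpha(y)=(\alpha\ovtimes\id)\alpha(y)$ on the $M$-leg with $\del(z^*\,\cdot\,)$. Since this slice commutes with $\Delta$ on the remaining legs, the left side becomes $\Delta(u_{z,y})$, while expanding $(\alpha\ovtimes\id)\alpha(y)=\sum_{x,w}w\otimes u_{w,x}\otimes u_{x,y}$ and collapsing the $w$-sum by the reproducing identity gives $\sum_{x}u_{z,x}\otimes u_{x,y}$, as claimed. For item $1$ I would use that $[(\omega\otimes\id)(T)]^*=(\omega^*\otimes\id)(T^*)$ for a normal functional $\omega$, where $\omega^*(m)=\overline{\omega(m^*)}$. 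Taking $\omega=\del(x^*\,\cdot\,)$ and $T=\alpha(y)$, the modular identity rewrites $\omega^*(m)=\overline{\del(x^*m^*)}=\del(mx)=\del(\mu(x)m)$, which is the functional defining $u_{\mu(x)^*,\,\cdot\,}$; combined with $\alpha(y)^*=\alpha(y^*)$ this gives $u_{x,y}^*=u_{\mu(x)^*,y^*}$.

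Item $2$ splits cleanly: the first equality holds because $\sum_{i}u_{1_i,x}=(\del\otimes\id)\alpha(x)$, and $\del$-invariance forces $(\del\otimes\id)\alpha(x)=\del(x)1$, since $\omega((\del\otimes\id)\alpha(x))=\del((\id\otimes\omega)\alpha(x))=\del(x)\omega(1)$ for all $\omega$ by Fubini; the second is even shorter, as $\sum_i\alpha(1_i)=\alpha(1)=1\otimes1$ yields $\sum_i u_{x^*,1_i}=(\del(x\,\cdot\,)\otimes\id)(1\otimes1)=\del(x)1$. For the two \emph{second-variable} identities of item $3$ I would feed multiplicativity into the definition, $u_{x,yz}=\sum_{p,q}\del(x^*pq)\,u_{p,y}u_{q,z}$, rewrite $\del(x^*pq)=\langle\vect{p}\otimes\vect{q},\mult^*\vect{x}\rangle$, and substitute the two Frobenius expansions of $\mult^*\vect{x}$; the reproducing identity then collapses the $p$- and $q$-sums to produce $\sum_t u_{t,y}u_{t^*x,z}$ and $\sum_t u_{xt,y}u_{t^*,z}$ respectively.

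The \emph{first-variable} identities $\sum_t u_{x,yt}u_{z,t^*}=u_{xz,y}=\sum_t u_{x,t}u_{z,t^*y}$ are the hard part, and I expect the main obstacle here. Expanding $\alpha(yt)=\alpha(y)\alpha(t)$ and $\alpha(t^*y)=\alpha(t^*)\alpha(y)$ and collecting the $\mathbb{G}$-legs, both sides reduce to the single element
\[
\Xi:=\sum_{t\in\onb(\mathcal{F}_0)}\alpha(t)_{13}\,\alpha(t^*)_{23}\in M\ovtimes M\ovtimes L^\infty(\mathbb{G}),
\]
after which slicing legs $1,2$ against $\del(x^*\,\cdot\,)$ and $\del(z^*\,\cdot\,)$ and applying the modular identity finishes both directions. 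The crux is the coevaluation-invariance claim $\Xi=\sum_{s\in\onb(\mathcal{F}_0)}s\otimes s^*\otimes 1$, i.e. that the canonical vector $\sum_s\vect{s}\otimes\vect{s^*}=\mult^*\vect{1}$ is fixed by the ``conjugate'' corepresentation. Conceptually this is the unitarity of the corepresentation implementing $\alpha$ on $L^2(M)$, which is equivalent to $\del$-preservation: multiplicativity makes $\mult$ an intertwiner, $\del$-invariance makes the implementing operators isometric, so the adjoint $\mult^*$ is again an intertwiner and carries the invariant vector $\vect{1}$ (here $\alpha(1)=1\otimes1$) to an invariant vector, giving $\Xi$ its $\mathbb{G}$-trivial form. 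I would either invoke this unitary implementation directly, or establish $\Xi=\sum_s s\otimes s^*\otimes1$ by hand from $(\del\otimes\id)\alpha=\del(\,\cdot\,)1$ and the modular identity, the latter being the most delicate computation precisely because the non-unitarity of $\mu$ must be tracked carefully.
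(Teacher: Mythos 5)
Your proof of items 1, 2 and 4, and of the second-variable half of item 3, matches the paper's essentially line for line (the paper handles the weight--functional interchange in item 2 a bit more carefully, taking $x$ positive and using normality of $\del$ via a supremum over finite subsets before extending by linearity, but the substance is identical). For the first-variable half of item 3, your route and the paper's are equivalent repackagings of one another, and both rest on the same nontrivial external input. The paper observes that the already-proven second-variable identities say precisely that $\mult$ intertwines $(U_0)_{13}(U_0)_{23}$ with $U_0$, where $U_0$ is the unitary implementation of $\alpha$ from \cite{Vaes01}, and then uses unitarity of $U_0$ to pass to adjoints, so that $\mult^*$ intertwines in the reverse direction; slicing gives the first-variable identities. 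Your invariance claim $\Xi=\sum_s s\otimes s^*\otimes 1$ is, after applying item 1 (namely $u_{z,t^*}=u_{\mu^{-1}(z^*),t}^{\,*}$), exactly the statement that $U_0$ is a co-isometry: slicing in the first two legs against $\del(x^*\,\cdot\,)\otimes\del(z^*\,\cdot\,)$ turns it into $\sum_t u_{x,t}u_{z,t^*}=\del(z^*x^*)1$, which is the full set of matrix coefficients of $U_0U_0^*=1$; and your subsequent collapsing computations (including the modular twist $\del(z^*x^*)=\del(x^*\mu^{-1}(z^*))$) do check out. So your first option, invoking the unitary implementation directly, is the paper's proof in different clothing. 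Two cautions, though. First, your fallback option of establishing $\Xi=\sum_s s\otimes s^*\otimes 1$ ``by hand from $\del$-invariance'' understates the difficulty considerably: $\del$-invariance yields isometry of the implementing operator cheaply, but what you need is the co-isometry $U_0U_0^*=1$, which is the genuinely hard content of the unitary implementation theorem of \cite{Vaes01}; your parenthetical ``which is equivalent to $\del$-preservation'' glosses over exactly this point. Second, unlike the sums in the proposition (which are finite because $yt$ and $t^*y$ vanish for all but finitely many $t\in\onb(\mathcal{F}_0)$), the sums $\sum_t u_{x,t}u_{z,t^*}$ defining the slices of $\Xi$ are genuinely infinite, so even their convergence presupposes boundedness of $U_0$; there is no way to make sense of $\Xi$ that bypasses the implementation, which is another reason the paper's formulation in terms of $\mult^*$ being an intertwiner is the cleaner way to organize the same argument.
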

\begin{proof}
    \begin{enumerate}
        \item One calculates as follows.
        \begin{align*}
            u_{x,y}^* = ((\del(x^* \cdot) \otimes \id)\alpha(y))^* = (\del(\cdot x) \otimes \id) \alpha(y^*) = (\del(\mu(x) \cdot ) \otimes \id) \alpha(y^*) = u_{\mu(x)^*,y^*}
        \end{align*}

        \item Suppose for the first equality that $x$ is positive. Then by normality of $\del$, we get for any positive $\omega \in L^{\infty}(\mathbb{G})_*$ that $\omega \left( \sum_{i \in I} u_{1_i,x} \right) = \sum_{i \in I} (\del(1_i \cdot) \otimes \omega) \alpha(x) = \sup_{\substack{F \subset I \\ |F| < \infty }} \del \left((1_F \otimes \omega) \alpha(x) \right) = \del\left(( \id \otimes \omega)\alpha(x)\right) = \del(x) \omega(1)$. As the predual seperates points, we have our statement for positive $x$. For general $x$, the statement now follows from linearity.

        For the second equality, it suffices to use normality of $\alpha$.

        \item Note first that the sums are finite since we take $x,y,z,t \in M_0$. Note also that for $x \in M_0$, we have $\mult^*(\vect{x}) = \sum_{t \in \onb(\mathcal{F}_0)} \vect{t} \otimes \vect{t^*x} = \sum_{t \in \onb(\mathcal{F}_0)} \vect{xt} \otimes \vect{t^*}$. Now, to show the second set of equalities, we calculate as follows.
        \begin{align*}
            u_{x,yz} = (\del(x^* \cdot) \otimes \id)\alpha(yz) &= (\del(x^* \cdot) \otimes \id)\alpha(y) \alpha(z) \\
            &= \sum_{t \in \onb(\mathcal{F}_0)} (\del(t^* \cdot) \otimes \del(x^*t \cdot) \otimes \id) \alpha(y)_{13} \alpha(z)_{23} \\
            &= \sum_{t \in \onb(\mathcal{F}_0)} u_{t,y} u_{t^*x,z}
        \end{align*}
        The first set of equalities now follows from the fact that we have now shown $\mult$ to be an intertwiner of $(U_0)_{13}(U_0)_{23}$ to $U_0$, where $U_0$ is the unitary implementation of $\alpha$ as in definition \ref{def: unitary implementation} below. It then follows that $\mult^*$ is an intertwiner in the other direction.

        \item We have the following.
        \begin{align*}
            &\Delta(u_{x,y}) = (\del(x^* \cdot ) \otimes \Delta)\alpha(y) = (\del(x^* \cdot ) \otimes \id \otimes \id) (\alpha \otimes \id)\alpha(y) \\
            &= \sum_{z \in \onb(\mathcal{F}_0)} (\del(x^* \cdot ) \otimes \id \otimes \id)(\alpha(z) \otimes u_{z,y}) = \sum_{z \in \onb(\mathcal{F}_0)} u_{x,z} \otimes u_{z,y}
        \end{align*}
        
    \end{enumerate}
\end{proof}

We assume for the rest of this section that the fixed action $\alpha$ is proper. Recall from \cite{Vaes01} the unitary implementation of the action $\alpha$.

\begin{definition} \label{def: unitary implementation}
    By \cite{Vaes01}, and the fact that $\alpha$ preserves the delta-form $\del$, we get that there is a unitary representation $U_0 \in B(L^2(M)) \ovtimes L^{\infty}(\mathbb{G})$ satisfying $(\omega_{x,y} \otimes \id)U_0 = u_{x,y}$, where $\omega_{x,y} \in B(L^2(M))_*$ is defined by $\omega_{x,y}(T) = \langle T \vect{y}, \vect{x} \rangle$ for any $x,y \in M_0$.
\end{definition}

\begin{lemma} \label{lem: modular data on generators}
    Denote by $S$ the antipode of $\mathbb{G}$, by $R$ its unitary antipode, and by $\tau$ its scaling group. We get the following properties for any $x,y \in M_0$.
    \begin{enumerate}
        \item $u_{x,y}$ lies in the domain of $S$ and $S(u_{x,y}) = u_{y,x}^* = u_{\mu(y)^*,x^*}$
        \item $\tau_t(u_{x,y}) = u_{\mu^{it}(x),\mu^{it}(y)}$
        \item $R(u_{x,y}) = u_{\mu^{1/2}(y)^*,\mu^{1/2}(x)^*}$
    \end{enumerate}
\end{lemma}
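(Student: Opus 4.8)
The plan is to deduce all three statements from the standard modular theory of the unitary corepresentation $U_0$ combined with the explicit modular operator of $\del$ on $L^2(M)$. Write $\nabla$ for the modular operator of $\del$ on $L^2(M)$ and $P$ for the scaling operator of $\mathbb{G}$, so that $\tau_t = \mathrm{Ad}(P^{it})$ on $L^{\infty}(\mathbb{G})$. The computational engine is the following: since $\langle \vect{x},\vect{y}\rangle = \del(y^*x)$ realises $L^2(M)$ as the GNS space of $\del$, and the modular automorphism group of the delta-form is conjugation by $\sigma^{it}$, one has $\nabla^{it}\vect{x} = \vect{\mu^{-it}(x)}$ for every $x \in M_0$; in particular each such $\vect{x}$ is entire analytic for $\nabla$.

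For part 1, I would use the general fact that for any unitary corepresentation $U$ of $\mathbb{G}$ the right slices $(\omega \otimes \id)(U)$ lie in the domain of $S$ and satisfy $S((\omega \otimes \id)(U)) = (\omega \otimes \id)(U^*)$ (see \cite{Vaes01,KustermansVaes99}). Applied to $U_0$ and $\omega = \omega_{x,y}$, this reduces the claim to identifying $(\omega_{x,y}\otimes\id)(U_0^*)$. A short computation with the conjugate functional $\overline{\omega}(T) = \overline{\omega(T^*)}$, using $\overline{\omega_{y,x}} = \omega_{x,y}$, gives $(\omega_{x,y}\otimes\id)(U_0^*) = u_{y,x}^*$, so that $S(u_{x,y}) = u_{y,x}^*$; the second equality $u_{y,x}^* = u_{\mu(y)^*,x^*}$ is then exactly item 1 of Proposition \ref{prop: calculation on u_(x,y)}.

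For part 2, I would invoke the interaction of the unitary implementation with the modular data established in \cite{Vaes01}: because $\alpha$ preserves $\del$, the unitary $U_0$ commutes with $\nabla^{it}\otimes P^{-it}$ (with signs fixed by the chosen conventions). Slicing this relation in the first leg and using $\tau_t = \mathrm{Ad}(P^{it})$ turns it into $\tau_t(u_{x,y}) = (\omega_{x,y}(\nabla^{it}\cdot\,\nabla^{-it})\otimes\id)(U_0)$; the explicit action $\nabla^{it}\vect{x} = \vect{\mu^{-it}(x)}$ then yields $\omega_{x,y}(\nabla^{it}T\nabla^{-it}) = \omega_{\mu^{it}(x),\mu^{it}(y)}(T)$, whence $\tau_t(u_{x,y}) = u_{\mu^{it}(x),\mu^{it}(y)}$. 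This paragraph is where the main obstacle lies: everything hinges on extracting from Vaes's construction the precise commutation relation between $U_0$, $\nabla$, and $P$, and on matching conventions so that the modular group of $\del$ is identified with $t \mapsto \mu^{-it}$ rather than its inverse.

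For part 3, I would combine the previous two through the polar decomposition $S = R\,\tau_{-i/2}$, i.e.\ $R = S \circ \tau_{i/2}$. First note that each $u_{x,y}$ with $x,y \in M_0$ is entire analytic for $\tau$, since $z \mapsto \mu^{z}(x) = \sigma^{-z}x\sigma^{z}$ is entire and supported on a fixed finite set, and the slicing formula of part 2 extends holomorphically via $\nabla^{iz}$. Evaluating this extension at $z = i/2$, and taking care that $\omega_{x,y}$ is antilinear in its first variable (which flips the sign of the exponent in that slot), gives $\tau_{i/2}(u_{x,y}) = u_{\mu^{1/2}(x),\mu^{-1/2}(y)}$. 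Applying part 1 and then item 1 of Proposition \ref{prop: calculation on u_(x,y)} finishes the computation: $R(u_{x,y}) = S(u_{\mu^{1/2}(x),\mu^{-1/2}(y)}) = u_{\mu^{-1/2}(y),\mu^{1/2}(x)}^* = u_{\mu^{1/2}(y)^*,\mu^{1/2}(x)^*}$. Once the operator identity of part 2 is in hand, parts 1 and 3 are bookkeeping with the rules of Proposition \ref{prop: calculation on u_(x,y)} and the antilinearity of $\omega_{x,y}$.
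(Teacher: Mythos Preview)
Your argument is correct. Parts 1 and 3 match the paper's approach: the same slicing identity $S((\omega\otimes\id)U_0) = (\omega\otimes\id)U_0^*$, the same decomposition $R = S\circ\tau_{i/2}$, and you correctly handle the antilinearity in the first slot of $u_{\cdot,\cdot}$ when continuing part 2 to $t=i/2$. The genuine difference is in part 2. The paper does not cite any commutation relation from \cite{Vaes01}; instead it reduces to $\mu$-eigenvectors $x,y$, notes from part 1 that $S^2(u_{x,y}) = \tau_{-i}(u_{x,y})$ is a scalar multiple of $u_{x,y}$, and then runs a Liouville argument on the entire function $t\mapsto \mu_x^{-it}\mu_y^{-it}\tau_t(u_{x,y})$ to pin down the scalar dependence. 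This is fully self-contained, needing only part 1 and elementary complex analysis. Your route via $(\nabla^{it}\otimes P^{-it})U_0 = U_0(\nabla^{it}\otimes P^{-it})$ is more direct once that identity is in hand, but as you yourself note, extracting it from \cite{Vaes01} with the correct sign for this paper's conventions (right action, the chosen normalisation of $\del$) is exactly the obstacle; the paper's Liouville trick sidesteps that bookkeeping entirely at the cost of a slightly less conceptual argument.
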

\begin{proof}
\begin{enumerate}
    \item Recall that for any unitary representation $U \in B(H) \ovtimes L^{\infty}(\mathbb{G})$, and any functional $\omega \in B(H)_*$ we have $S((\omega \otimes \id)U) = (\omega \otimes \id)U^*$. Now, since $u_{x,y} = (\omega_{x,y} \otimes \id)U_0$, we get that $S(u_{x,y}) = (\omega_{x,y} \otimes \id)U_0^* = u_{y,x}^* = u_{\mu(y)^*,x^*}$.

    \item By discreteness of $M$ and positivity of $\sigma$, we get that $\mu$ admits a basis of eigenvectors in $M_0$, so by linearity it suffices to suppose that $x$ and $y$ are eigenvectors of $\mu$, say with eigenvalues $\mu_x$ and $\mu_y$. In that case, we find by the previous point that $\tau_{-ni}(u_{x,y}) = S^{2n}(u_{x,y}) = u_{\mu^{-n}(x),\mu^n(y)} = \overline{\mu_x}^{-n} \mu_y^n u_{x,y}$. Define the entire function $f:\mathbb{C} \to L^{\infty}(\mathbb{G}): t \mapsto \overline{\mu_x}^{-it} \mu_y^{-it} \tau_t(u_{x,y})$. Then on the one side, we have for any $s \in \mathbb{R}$ and $t \in \mathbb{C}$ that $\norm{f(t+s)} = \norm{f(t)}$. On the other hand, one easily checks that $f(t+i) = f(t)$ for any $t \in \mathbb{C}$. Consequently, the entire function $f$ is bounded, and hence constant by Liouville's theorem. It follows that $\tau_t(u_{x,y}) = u_{\mu^{it}(x),\mu^{it}(y)}$.
    
    \item This now follows from the fact that we must have $S = R \circ \tau_{-i/2}$ wherever defined.
\end{enumerate}    
\end{proof}

\begin{lemma}
    There is an equivalence relation $\approx$ on $I$ given by $i \approx j$ iff $u_{1_i,1_j} \neq 0$. Moverover, for any $W \subset I$, we have $\alpha(1_W) = 1_W \otimes 1 = \alpha^{\op}(1_W)$ if and only if $W$ is invariant under $\approx$. Due to this property, we will call $\approx$ the orbit equivalence relation of $\alpha$. We will call its equivalence classes the orbits of $\alpha$, and denote the set of these orbits as $\mathcal{E}$.
\end{lemma}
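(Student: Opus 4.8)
I would prove the three defining properties of an equivalence relation for $\approx$ and the subset characterisation separately, after first recording two reductions that make the nonvanishing of $u_{1_i,1_j}$ tractable. The first reduction is that, since $1_i$ is central and $\del$ restricts to a faithful normal functional on the block $M_i$, the slice $(\del(1_i\,\cdot)\otimes\id)$ is faithful on $(M_i\ovtimes L^\infty(\mathbb{G}))^+$; as $\alpha(1_j)$ is a projection this gives
\[
i\approx j \iff u_{1_i,1_j}\neq 0 \iff (1_i\otimes 1)\alpha(1_j)\neq 0 .
\]
The second reduction comes from Proposition \ref{prop: calculation on u_(x,y)}: specialising $u_{xz,y}=\sum_t u_{x,t}u_{z,t^*y}$ to $x=z=1_i$, $y=1_k$, and using $u_{1_i,t^*}=u_{1_i,t}^*$ (from part 1, as $\mu(1_i)=1_i$), yields the reproducing identity
\[
u_{1_i,1_k}=\sum_{t\in M_k}u_{1_i,t}\,u_{1_i,t}^* ,\qquad\text{so}\qquad i\approx k \iff \exists\, t\in M_k:\ u_{1_i,t}\neq 0 .
\]
Summing over all blocks and invoking part 2 gives the ``row isometry'' relation $\sum_{t\in\onb(\mathcal{F}_0)}u_{1_i,t}u_{1_i,t}^*=\del(1_i)1$, which I will use below. \emph{Symmetry} is then immediate from Lemma \ref{lem: modular data on generators}(3): since $R(u_{1_i,1_j})=u_{1_j,1_i}$ and $R$ is a bijection, $u_{1_i,1_j}\neq 0 \iff u_{1_j,1_i}\neq 0$.

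\textbf{The subset characterisation.} I would prove this before worrying about reflexivity/transitivity, as it needs only symmetry, part 2 and faithfulness. If $\alpha(1_W)=1_W\otimes 1$, then for $i\notin W$ one gets $\sum_{j\in W}u_{1_i,1_j}=(\del(1_i\,\cdot)\otimes\id)(1_W\otimes 1)=0$, and positivity of each $u_{1_i,1_j}$ forces $u_{1_i,1_j}=0$ for all $j\in W$; with symmetry this says $W$ has no $\approx$-edges to its complement, i.e.\ $W$ is $\approx$-invariant. Conversely, if $W$ is $\approx$-invariant then $(1_i\otimes 1)\alpha(1_j)=0$ whenever exactly one of $i,j$ lies in $W$, whence $\alpha(1_W)\le 1_W\otimes 1$; applying the faithful slices $(\del(1_i\,\cdot)\otimes\id)$ to $d:=(1_W\otimes 1)-\alpha(1_W)\ge 0$ and using part 2 (which gives $\sum_{j\in W}u_{1_i,1_j}=\del(1_i)1$ for $i\in W$ and $=0$ for $i\notin W$) shows each block component of $d$ vanishes, so $d=0$. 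Finally the two conditions are equivalent because $\alpha^{\op}(1_W)=(\op\otimes R)\alpha(1_W)$ and $\op\otimes R$ fixes $1_W\otimes 1$. \emph{Reflexivity} I would then deduce cheaply: part 2 gives $\sum_k u_{1_k,1_i}=\del(1_i)1\neq 0$, so some $k$ satisfies $k\approx i$; symmetry gives $i\approx k$, and transitivity then yields $i\approx i$.

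\textbf{Transitivity is the main obstacle.} The remaining and genuinely hard point is that $u_{1_i,1_j}\neq 0$ and $u_{1_j,1_k}\neq 0$ imply $u_{1_i,1_k}\neq 0$. I expect this to be where the real work lies, and I want to stress why: by Definition \ref{def: unitary implementation} the $u$'s are matrix coefficients of a single unitary corepresentation $U_0$, and for a general unitary the pattern $U_{12}\neq 0,\ U_{23}\neq 0,\ U_{13}=0$ is perfectly possible — so no purely corepresentation-theoretic argument can work, and coassociativity only reproduces part 4. What must be exploited is the extra positivity $u_{1_i,1_j}\ge 0$ (special to the $1_i$'s), together with the algebra structure of Proposition \ref{prop: calculation on u_(x,y)}. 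My proposed route is to use the reproducing identity to replace $i\approx k$ by ``some $u_{1_i,t}\neq 0$ with $t\in M_k$'', and then to analyse $\Delta(u_{1_i,t})=\sum_{w}u_{1_i,w}\otimes u_{w,t}$, grouped by the block of $w$. Since $\del$-preservation places the $u$'s in the domain of $\varphi$ (this is exactly where properness enters), I would pair the second leg against a suitable positive functional built from $\varphi$ and the nonzero positive element $u_{1_j,1_k}$ so as to detect precisely the block-$j$ contribution $\sum_{w\in M_j}u_{1_i,w}\otimes u_{w,t}$; positivity of the diagonal coefficients is what should prevent the block-$j$ term from cancelling against the others, and injectivity of $\Delta$ then forces $u_{1_i,1_k}\neq 0$. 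Making the ``detection'' step rigorous — most plausibly via a Schur-type orthogonality relation $\varphi(u_{x,y}^*u_{x',y'})$ for the coefficients of $U_0$ — is the crux of the whole lemma; once transitivity is in hand, reflexivity follows as above and the identification of the $\approx$-classes with the orbits $\mathcal{E}$ is immediate from the subset characterisation.
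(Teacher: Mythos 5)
Your handling of symmetry, of the invariant-subset characterisation, and of reflexivity (granted transitivity) is correct, and it is essentially the paper's: symmetry from $R(u_{1_i,1_j})=u_{1_j,1_i}$, the subset characterisation from positivity of the $u_{1_i,1_j}$, part 2 of Proposition \ref{prop: calculation on u_(x,y)} and faithfulness of the slices $\del(1_i\,\cdot)\otimes\id$, and your reproducing identity is the same block-reduction the paper records in the form $u_{\sigma_i^{-1},1_j}=\sum_{x,y}u_{x,y}u_{x,y}^*$ (so that $i\approx j$ iff some coefficient between the blocks $M_i$ and $M_j$ survives).

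The genuine gap is transitivity, which you correctly single out as the crux but then only sketch: the ``detection'' step --- isolating the block-$j$ contribution of $\Delta(u_{1_i,t})$ by pairing the second leg against a functional built from $\varphi$ and $u_{1_j,1_k}$ --- is exactly what you do not know how to carry out, and you say so. Note moreover that the Schur-type orthogonality you hope for is not available at this stage: in the paper, all $\varphi$-averaging results (Lemma \ref{lem: concrete invariance of functionals}, Lemma \ref{equivariant endomorphisms of M}, Lemma \ref{lem: integrating intertwiners}) come after, and make use of, the orbit relation, so that route risks circularity; it also invokes properness, which is not needed for this lemma. The paper's proof of transitivity is instead a short, self-contained algebraic computation from Proposition \ref{prop: calculation on u_(x,y)}. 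Assume $i\not\approx j$. By the block-reduction, $u_{x',y}=0$ for all $x'\in M_i$, $y\in M_j$; since $\mult^*$ maps $M_i$ into $M_i\otimes M_i$ (Sweedler notation $\mult^*(x)=x_{(1)}\otimes x_{(2)}$), this gives, for any $k\in I$, $x\in M_i$, $y\in M_j$, $z\in M_k$,
\[
0=(u_{x_{(1)},z}\otimes1)\Delta(u_{x_{(2)},y})=\sum_{t\in\onb(1_k\cdot\mathcal{F}_0)}u_{x,zt}\otimes u_{t,y}.
\]
Multiplying this on the right by its adjoint and summing over $y\in\onb(1_j\cdot\mathcal{F}_0)$, then over $z\in\onb(1_k\cdot\mathcal{F}_0)$, and taking $x=1_i$, the fusion rules collapse everything to $d_k^{-2}\,u_{1_i,1_k}\otimes u_{\sigma_k^{-2},1_j}=0$. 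An elementary tensor vanishes only if one of its factors does, and $u_{\sigma_k^{-2},1_j}=0$ iff $u_{1_k,1_j}=0$ (as $\sigma_k^{-2}$ and $1_k$ are bounded above and below by multiples of each other), so $i\not\approx j$ forces $i\not\approx k$ or $k\not\approx j$ for every $k$ --- the contrapositive of transitivity. In short, the missing idea is not an orthogonality relation for $\varphi$, but the ``multiply by the adjoint and sum over orthonormal bases'' trick applied to the displayed identity, using only the relations of Proposition \ref{prop: calculation on u_(x,y)} and positivity.
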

\begin{proof}
    Since $R(u_{1_i,1_j}) = u_{1_j,1_i}$, the relation is symmetric. Note also that
    \[
    u_{\sigma_i^{-1},1_j} = \sum_{\substack{ x \in \onb(1_i \cdot \mathcal{F}_0) \\ y \in \onb(1_j \cdot \mathcal{F}_0) }} u_{x,y} u_{x,y}^*
    \]
    so $u_{\sigma_i^{-1},1_j} = 0$ if and only if $u_{x,y}$ is zero for every $x \in M_i, y \in M_j$. Now, since $\sigma_i^{-1}$ and $1_i$ are bounded above and below by multiples of each other, $u_{\sigma_i^{-1},1_j} = 0$ if and only if $u_{1_i,1_j} = 0$. Finally, if $i \not \approx j$, we get that for every $x \in M_i$, $y \in M_j$, $k \in I$ and $z \in M_k$
    \begin{equation}
    0 = (u_{x_{(1)},z} \otimes 1) \Delta(u_{x_{(2)},y}) = \sum_{t \in \onb(1_k \cdot \mathcal{F}_0)} u_{x,zt} \otimes u_{t,y}, \label{eq: transitivity calculation}
    \end{equation}
    Where we denote $\mult^*(a) := a_{(1)} \otimes a_{(2)}$ as a sort of Sweedler notation. Note that for $a \in M_0$, this amounts to taking a finite sum of pure tensors in $M_0 \otimes M_0$.
    Multiplying the expression \eqref{eq: transitivity calculation} on the right with its adjoint, and then summing over all $y \in \onb(1_j \cdot \mathcal{F}_0)$ yields
    \begin{align*}
        0 &= \sum_{t,s \in \onb(1_k \cdot \mathcal{F}_0)} u_{x,zt}u_{\mu(x)^*,s^*z^*} \otimes u_{t \mu(s)^*,1_j} \\
        &= \sum_{t,s,r \in \onb(1_k \cdot \mathcal{F}_0)} \del(\mu(s)t^*r) u_{x,zt}u_{\mu(x)^*,s^*z^*} \otimes u_{r,1_j} \\
        &= \sum_{r \in \onb(1_k \cdot \mathcal{F}_0)} u_{x\mu(x)^*,zrz^*} \otimes u_{r,1_j}
    \end{align*}
    Now summing over $z \in \onb(1_k \cdot \mathcal{F}_0)$, and taking $x = 1_i$, we find $d_k^{-2} u_{1_i,1_k} \otimes u_{\sigma_k^{-2},1_j} = 0$. Now, since $\sigma_k$ and $1_k$ are bounded above and below by multiples of each other, we get $u_{\sigma_k^{-2},1_j} = 0$ if and only if $u_{1_k,1_j} = 0$. Hence, if $i \not \approx j$, then for any $k \in I$, $i \not \approx k$ or $k \not \approx j$. This shows transitivity. 

    Note now that $\sum_{j \in I} u_{1_i,1_j} = \del(1_i)1 \neq 0$ for any $i \in I$. Hence, there must exist for every $i \in I$ some $j \in I$ such that $i \approx j$. Then by symmetry $j \approx i$, and by transitivity $i \approx i$. Hence, $\approx$ is also reflexive.

    Clearly $\alpha(1_W) = 1_W \otimes 1$ if and only if $\alpha^{\op}(1_W) = 1_W \otimes 1$
    Now, take $W \subset I$ arbitrarily, and suppose that $\alpha(1_W) = 1_W \otimes 1$. This forces
    \begin{align*}
        u_{1_i, 1_W} =& (\del(1_i \cdot) \otimes 1) \alpha(1_W) \\
        =& \del(1_i 1_W) 1 \\
        =& \delta_{i \in W} \del(1_i) \\
        =& \delta_{i \in W} u_{1_i,1} .
    \end{align*}
    Hence, for any $i \in W$, and $j \in I$ with $i \approx j$, we must have $u_{1_i,1_W} = u_{1_i,1} = u_{1_i,1_W} + u_{1_i,1_{I \backslash W}}$, and since these are positive, $u_{1_i,1_{I \backslash W}} = 0$, so $j \in W$. It follows that $W$ is invariant under $\approx$.

    Conversely, suppose $1_W$ is invariant under $\approx$. Then for any $i \in I$, $x \in M_i$, we have $u_{x,1_W} = \delta_{i \in W} \del(x^*)$. It follows that $\alpha(1_W) = 1_W \otimes 1$.
\end{proof}

\begin{lemma} \label{lem: concrete invariance of functionals}
    For any $i \approx k \in I$ and $e \in M_0$, we have $d_i^{-2} \varphi(u_{\sigma_i^{-2},e}) = d_k^{-2} \varphi(u_{\sigma_k^{-2},e})$ and $d_i^{-2}\psi(u_{e,\sigma_i^{-2}}) = d_k^{-2} \psi(u_{e,\sigma_k^{-2}})$.
\end{lemma}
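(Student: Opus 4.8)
The plan is to reduce the statement for $\psi$ to the one for $\varphi$, and then to recognise the $\varphi$-quantity as the evaluation of a canonical invariant functional against an $\alpha$-fixed element.

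First I would dispatch the $\psi$-identity. Since $\psi = \varphi \circ R$ and $\mu^{1/2}$ fixes $\sigma_i^{-2} = 1_i \sigma^{-2}$ (it commutes with $\sigma$), Lemma~\ref{lem: modular data on generators}(3) gives $R(u_{e,\sigma_i^{-2}}) = u_{\sigma_i^{-2},\mu^{1/2}(e)^*}$, hence $\psi(u_{e,\sigma_i^{-2}}) = \varphi(u_{\sigma_i^{-2},\mu^{1/2}(e)^*})$. As $\mu^{1/2}(e)^* \in M_0$ whenever $e \in M_0$, the claimed identity for $\psi$ at $e$ is exactly the claimed identity for $\varphi$ at $e' = \mu^{1/2}(e)^*$. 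So it suffices to prove the $\varphi$-statement for all $e \in M_0$.

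For the $\varphi$-statement, set $P(e) := (\id \otimes \varphi)\alpha(e)$. Using that $\del(\sigma_i^{-2}\,\cdot) = d_i \tr(1_i \sigma^{-1}\,\cdot)$ is a bounded functional supported on block $i$, together with Fubini for $\varphi$, I get $\varphi(u_{\sigma_i^{-2},e}) = \del(\sigma_i^{-2} P(e)) = d_i \tr(1_i \sigma^{-1} P(e))$, so that $d_i^{-2}\varphi(u_{\sigma_i^{-2},e}) = \omega_i(P(e))$ with $\omega_i := d_i^{-1}\tr(1_i \sigma^{-1}\,\cdot)$ a state on $M$ (indeed $\omega_i(1)=1$). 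Only the block-$i$ part of $P(e)$ enters, which is the finite sum $\sum_{y \in \onb(1_i \cdot \mathcal{F}_0)} y\,\varphi(u_{y,e})$ and is well-defined by properness, so there are no convergence issues. Next, combining the action property $(\alpha \otimes \id)\alpha = (\id \otimes \Delta)\alpha$ with left invariance $(\id \otimes \varphi)\Delta(a) = \varphi(a)1$ yields $\alpha(P(e)) = P(e) \otimes 1$; that is, $m := P(e)$ lies in the fixed-point algebra. The $\varphi$-statement is therefore reduced to the purely action-theoretic claim that $\omega_i(m) = \omega_k(m)$ whenever $m$ is $\alpha$-fixed and $i \approx k$.

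To prove this claim I would exploit the connecting element $u_{1_i,1_k} \neq 0$. For $\alpha$-fixed $m$ one has $u_{y,m} = \del(y^* m)1$ for every $y \in M_0$, and feeding this into the multiplicativity relations of Proposition~\ref{prop: calculation on u_(x,y)}(3) (expanding $(u_{x_{(1)},z} \otimes 1)\Delta(u_{x_{(2)},m})$ two ways, with $x = 1_i$, and reconstructing the resulting orthonormal-basis sums via the modular identity $\del(ab) = \del(b\mu(a))$) produces the operator identity $u_{\mu(m_i^*),z} = u_{1_i,zm}$ for all $z \in M_0$, where $m_i = 1_i m$. The left-hand side depends only on the block-$i$ part of $m$, while once $z \in M_k$ the right-hand side depends only on its block-$k$ part, so this single identity couples the two blocks. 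I would then extract the scalar equality just as in the proof of the orbit-equivalence lemma: multiply this identity by suitable adjoints, sum over orthonormal bases of $M_i$ and $M_k$, and collapse the $\mathbb{G}$-leg using left invariance of $\varphi$, arriving at $(\omega_i(m) - \omega_k(m))\,u_{1_i,1_k} = 0$; since $u_{1_i,1_k} \neq 0$, this forces $\omega_i(m) = \omega_k(m)$, completing the reduction.

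I expect this last extraction to be the main obstacle: turning the operator identity $u_{\mu(m_i^*),z} = u_{1_i,zm}$ into a scalar identity requires careful bookkeeping of the orthonormal-basis reconstructions (tracking the conjugate-linearity of $u_{x,y}$ in its first slot and the modular automorphism $\mu$), and a justification that all weight manipulations stay inside the domain of $\varphi$ guaranteed by properness. By contrast, the $\psi$-to-$\varphi$ reduction and the identification of $d_i^{-2}\varphi(u_{\sigma_i^{-2},e})$ with $\omega_i(P(e))$ are routine.
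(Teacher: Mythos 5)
Your two reductions are sound: the passage from $\psi$ to $\varphi$ via $R$ is exactly the paper's own one-liner, and the identification $d_i^{-2}\varphi(u_{\sigma_i^{-2},e}) = \omega_i(P(e))$ with $P(e) := (\id\otimes\varphi)\alpha(e)$ is correct, since only finitely many blocks intervene. But your third step is not available as stated: ``$P(e)$ lies in the fixed-point algebra'' presupposes that $P(e)$ is a bounded element of $M$, and nothing proved up to this point gives that. A priori $P(e)$ is only a blockwise-finite element of $\prod_i M_i$ (for $e \geq 0$, an element of the extended positive part of $M$), and in the paper the uniform control of these blocks is deduced in lemma \ref{equivariant endomorphisms of M} \emph{from} the present lemma — so silently placing $P(e)$ inside $M^{\alpha}$ flirts with circularity. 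This part is repairable: for $e \geq 0$ one can view $(\id\otimes\varphi)\alpha$ as an operator-valued weight with values in the extended positive part, where the fixed-point equation does hold, and your coupling identity can then be recovered blockwise via truncations $1_F m \nearrow m$; but this has to be argued, and it does not follow from properness alone as you suggest.

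The decisive gap is the final step. The extraction of $(\omega_i(m)-\omega_k(m))\,u_{1_i,1_k}=0$ is asserted, not proved, and it is \emph{not} ``just as in the proof of the orbit-equivalence lemma'': in that proof every summed basis index sits in matching slots of the two factors being multiplied, which is exactly the configuration that the relations of proposition \ref{prop: calculation on u_(x,y)} and unitarity of $U_0$ can collapse, whereas your coupling identity has the fixed element woven into a single slot, and multiplying it by adjoints produces mixed-slot sums that these relations do not collapse. (There is also a modular slip: with the paper's convention $\mu(x)=\sigma^{-1}x\sigma$, the identity should read $u_{1_i,zm} = u_{\mu(m_i)^*,z}$, and $\mu(m_i)^* = \mu^{-1}(m_i^*) \neq \mu(m_i^*)$ in general — in computations whose entire difficulty is this bookkeeping, such signs matter.) To produce the scalar $d_k^{-2}\del(\sigma_k^{-2}m_k)$ one needs the contraction $\sum_{t}(1_k)_{(1)}t(1_k)_{(2)}\otimes t^{*} = d_k^{-2}\,1_k\otimes\sigma_k^{-2}$, i.e.\ one must sandwich $\Delta\bigl(u_{(1_i)_{(2)},\,\cdot}\bigr)$ between the Sweedler legs of $1_i$ and $1_k$ and then apply left invariance of $\varphi$ — and that is precisely the paper's proof, which moreover runs directly on $e$ with no need for the fixed-point detour. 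So your proposal correctly repackages the statement (it amounts to: $\alpha$-fixed elements have $\mathfrak{e}$-image constant on orbits), but it postpones the entire mathematical content of the lemma to the step you yourself flag as ``the main obstacle''; as written it is not a proof.
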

\begin{proof}
    Note that for any $e \in M_0$, we have the following\footnote{The steps in this calculation were made by choosing an explicit orthonormal basis of $\mathcal{F}_0$. This, however, inhibits swift readability so we must invite the reader to do the same.}, using again the Sweedler type notation $\mult^*(a) = a_{(1)} \otimes a_{(2)}$.
    \begin{align*}
        &(u_{(1_i)_{(1)},(1_k)_{(1)}} \otimes 1) \Delta(u_{(1_i)_{(2)},e})(u_{(1_i)_{(3)},(1_k)_{(2)}} \otimes 1) \\
        &= \sum_{t \in \onb(\mathcal{F}_0)} u_{(1_i)_{(1)},(1_k)_{(1)}}u_{(1_i)_{(2)},t}u_{(1_i)_{(3)},(1_k)_{(2)}} \otimes u_{t,e} \\
        &= \sum_{t \in \onb(\mathcal{F}_0)} u_{1_i,(1_k)_{(1)}t(1_k)_{(2)}} \otimes u_{t,e} \\
        &= u_{1_i,1_k} \otimes d_k^{-2}u_{\sigma_k^{-2},e}
    \end{align*}
    Hence, applying $\id \otimes \varphi$, we find
    \begin{align*}
        u_{(1_i)_{(1)},(1_k)_{(1)}}u_{(1_i)_{(3)},(1_k)_{(2)}} \varphi(u_{(1_i)_{(2)},e}) = u_{1_i,1_k} d_k^{-2}\varphi(u_{\sigma_k^{-2},e}) \\
        u_{1_i,1_k} d_i^{-2}\varphi(u_{\sigma_i^{-2},e}) = u_{1_i,1_k} d_k^{-2}\varphi(u_{\sigma_k^{-2},e})
    \end{align*}
    Hence, when $i \approx k$, $u_{1_i,1_k} \neq 0$, meaning $d_i^{-2}\varphi(u_{\sigma_i^{-2},e}) = d_k^{-2}\varphi(u_{\sigma_k^{-2},e})$. The second claim follows from the fact that $\psi = \varphi \circ R$.
\end{proof}

\begin{definition} \label{def: conditional expectation}
    We define the following conditional expectation.
    \begin{align*}
        \mathfrak{e}: M \to Z(M) \cong \ell^{\infty}(I): x \mapsto \sum_{y \in \onb(\mathcal{F}_0)} yxy^* = \sum_{i \in I} \frac{\tr(x \sigma_i^{-1})}{d_i} 1_i
    \end{align*}
\end{definition}

\begin{lemma} \label{equivariant endomorphisms of M}
    For any $x \in M_0$, we have that $\mathfrak{e}\left[(\id \otimes \varphi)\alpha(x) \right]$ is a well-defined element of $\ell^{\infty}(I)$.
\end{lemma}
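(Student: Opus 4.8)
The plan is to reduce everything to a single explicit scalar per block and then invoke Lemma \ref{lem: concrete invariance of functionals}. Write the conditional expectation of definition \ref{def: conditional expectation} as $\mathfrak{e} = \sum_{i\in I}\omega_i(\,\cdot\,)1_i$, where $\omega_i(T) = d_i^{-1}\tr(T\sigma_i^{-1})$ is the normal functional reading off the $i$-th central coefficient. Using $\del_{|M_i}(z) = d_i\tr(z\sigma_i)$ and cyclicity of the trace, I would first check the identity $\omega_i = d_i^{-2}\del(\sigma_i^{-2}\,\cdot\,)$, which gives $(\omega_i\otimes\id)\alpha(x) = d_i^{-2}u_{\sigma_i^{-2},x}$, since $\sigma_i^{-2}$ is self-adjoint and supported on block $i$. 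As $\sigma_i^{-2}\in M_i\subset M_0$ and $x\in M_0$, properness (definition \ref{def: generators of faithful part + proper action}) places $u_{\sigma_i^{-2},x}$ in the domain of $\varphi$; hence each
\[
\mathfrak{e}\left[(\id\otimes\varphi)\alpha(x)\right]_i = \varphi\left[(\omega_i\otimes\id)\alpha(x)\right] = d_i^{-2}\varphi(u_{\sigma_i^{-2},x})
\]
is a well-defined finite scalar, the displayed interchange of the bounded normal $\omega_i$ with the weight $\varphi$ being justified by normality. This settles well-definedness componentwise; the real content is the boundedness of the family $\left(d_i^{-2}\varphi(u_{\sigma_i^{-2},x})\right)_{i\in I}$.

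By linearity in $x$ I may assume $x$ is supported on a single block $M_j$. I would then observe that the $i$-th coefficient vanishes unless $i\approx j$: since $\sigma_i^{-2}\in M_i$ and $x\in M_j$, the computation in the lemma introducing the orbit relation $\approx$ shows that $u_{a,b}=0$ for all $a\in M_i$, $b\in M_j$ whenever $i\not\approx j$, so in particular $u_{\sigma_i^{-2},x}=0$ there. Thus the central support of $\mathfrak{e}[(\id\otimes\varphi)\alpha(x)]$ lies inside the single orbit $[j]\in\mathcal{E}$. Finally, Lemma \ref{lem: concrete invariance of functionals} with $e=x$ gives $d_i^{-2}\varphi(u_{\sigma_i^{-2},x}) = d_j^{-2}\varphi(u_{\sigma_j^{-2},x})$ for every $i\approx j$, so the coefficient is constant on this orbit and
\[
\mathfrak{e}\left[(\id\otimes\varphi)\alpha(x)\right] = d_j^{-2}\varphi(u_{\sigma_j^{-2},x})\sum_{i\approx j}1_i,
\]
a scalar multiple of a central projection, hence an element of $\ell^{\infty}(I)$ of norm $|d_j^{-2}\varphi(u_{\sigma_j^{-2},x})|$. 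Summing over the finitely many blocks in the support of a general $x\in M_0$ gives the claim.

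The hard part is precisely the boundedness over the possibly infinite index set $I$, and it is defeated by the combination of two facts: the vanishing-off-the-orbit property (confining the support to one orbit per block) and the invariance of Lemma \ref{lem: concrete invariance of functionals} (collapsing the a priori unbounded family of values to a single value per orbit). A secondary subtlety, which is what makes \emph{well-defined} more than a formality, is that $(\id\otimes\varphi)\alpha(x)$ need not be a bounded element of $M$; this is handled by reading $\mathfrak{e}$ off through the bounded functionals $\omega_i$ and using properness to keep each $u_{\sigma_i^{-2},x}$ inside the domain of $\varphi$.
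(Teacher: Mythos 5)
Your proof is correct and follows essentially the same route as the paper: both identify the $i$-th central coefficient as $d_i^{-2}\varphi(u_{\sigma_i^{-2},x})$ (the paper via an orthonormal-basis expansion of $\alpha(x)$, you via the functionals $\omega_i$), then combine vanishing off the orbits supporting $x$ with the orbit-invariance of Lemma \ref{lem: concrete invariance of functionals} to get boundedness. Your write-up merely makes explicit two points the paper leaves implicit, namely the off-orbit vanishing and the interpretation of $(\id\otimes\varphi)\alpha(x)$ blockwise.
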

\begin{proof}
    By direct calculation, we get the following.
    \begin{align*}
        \mathfrak{e}[(\id \otimes \varphi)\alpha(x)] =& \sum_{y \in \onb(\mathcal{F}_0)} \mathfrak{e}(y) \varphi(u_{y,x}) \\
        =& \; \sum_{\substack{i \in I \\ y \in \onb(\mathcal{F}_0)}} \frac{\tr(y \sigma_i^{-1})}{d_i} 1_i \varphi(u_{y,x}) \\
        =& \sum_{i \in I} \frac{1}{d_i^2} 1_i \varphi(u_{\sigma_i^{-2},x}) \\
        =& \sum_{i \in I} 1_i \frac{\varphi(u_{\sigma_i^{-2},x})}{d_i^2}
    \end{align*}
    Now, as $x$ is supported on finitely many orbits, and $\frac{\varphi(u_{\sigma_i^{-2},x})}{d_i^2}$ is constant on orbits, it follows that this is indeed a well-defined element of $\ell^{\infty}(I)$.
\end{proof}

\begin{corollary}
    Note that $\mathfrak{e}[(\id \otimes \psi)\alpha^{\op}(x)] = \mathfrak{e}[(\id \otimes \varphi)\alpha(x)]^{\op} \in \ell^{\infty}(I)$ is now also a well-defined element of $\ell^{\infty}(I)$.
\end{corollary}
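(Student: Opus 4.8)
The plan is to deduce this directly from Lemma \ref{equivariant endomorphisms of M} by rewriting the new expression in terms of the one already treated there. The only tools needed are the defining relation $\alpha^{\op}(x^{\op}) = (\op \otimes R)\alpha(x)$ of the opposite action, the normalisation $\psi = \varphi \circ R$ together with the involutivity $R^2 = \id$, and the observation that the conditional expectation is compatible with the passage to the opposite algebra. Here I read $\alpha^{\op}(x)$ as shorthand for $\alpha^{\op}(x^{\op})$ with $x \in M_0$ under the identification $M \cong M^{\op}$ of underlying vector spaces.

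First I would perform the substitution at the level of the weights. Since $R$ is an involutive anti-automorphism and $\psi = \varphi \circ R$, we have $\psi \circ R = \varphi$, so that
\begin{align*}
    (\id \otimes \psi)\alpha^{\op}(x^{\op}) = (\id \otimes \psi)(\op \otimes R)\alpha(x) = (\op \otimes (\psi \circ R))\alpha(x) = (\op \otimes \varphi)\alpha(x) = \big[(\id \otimes \varphi)\alpha(x)\big]^{\op}.
\end{align*}
Writing $\alpha(x) = \sum_{y \in \onb(\mathcal{F}_0)} y \otimes u_{y,x}$ makes the last equality transparent: applying $\op \otimes \varphi$ produces $\sum_y y^{\op}\, \varphi(u_{y,x})$, which is exactly the image under $\op$ of $(\id \otimes \varphi)\alpha(x) = \sum_y y\, \varphi(u_{y,x})$, by linearity and normality of the slice map.

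It then remains to apply the conditional expectation and check that it commutes with $\op$ in the relevant sense. Because $\mathfrak{e}$ takes values in $Z(M) \cong \ell^{\infty}(I)$, and the canonical anti-isomorphism $\op$ fixes the central projections $1_i$, the opposite conditional expectation agrees with $\mathfrak{e}$ on the shared centre $Z(M) = Z(M^{\op}) = \ell^{\infty}(I)$. Consequently,
\begin{align*}
    \mathfrak{e}\big[(\id \otimes \psi)\alpha^{\op}(x^{\op})\big] = \mathfrak{e}\big[(\id \otimes \varphi)\alpha(x)\big]^{\op}.
\end{align*}
Since the right-hand side is the image under $\op$ of the element $\mathfrak{e}[(\id \otimes \varphi)\alpha(x)] \in \ell^{\infty}(I)$, which is well-defined by Lemma \ref{equivariant endomorphisms of M}, it is itself a well-defined element of $\ell^{\infty}(I)$, as claimed.

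The computation above is essentially a sequence of substitutions, so I expect no serious difficulty. The one point deserving genuine care — and the step I would flag as the main obstacle — is the compatibility of the conditional expectation with the opposite structure: one must verify that the delta-form induced on $M^{\op}$ yields a conditional expectation whose restriction to the centre matches $\mathfrak{e}$, so that the single symbol $\mathfrak{e}$ may legitimately be used on both sides of the stated equality. Once this bookkeeping between $\op$, $R$, and the two Haar weights is pinned down, the statement follows immediately.
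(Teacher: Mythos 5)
Your proof is correct and is essentially the argument the paper intends: the corollary's content is precisely the reduction via $\alpha^{\op}(x^{\op}) = (\op \otimes R)\alpha(x)$, $\psi = \varphi \circ R$ and $R^2 = \id$, which transports the statement back to Lemma \ref{equivariant endomorphisms of M}. The compatibility of $\mathfrak{e}$ with $\op$ on the centre, which you flag as the main point of care, is indeed the only bookkeeping step, and your verification of it is fine.
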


\section{Equivariant representation theory} \label{sc: equivariant representation theory}

 Let us recall from \cite{DeCommerDeRo2024} the notion of an $\alpha$-equivariant representation.
\begin{definition} \label{def: equivariant representation}
    An $\alpha$-equivariant representation of $\mathbb{G}$ on a Hilbert-$M$-bimodule $H$ with left- and right $M$-module structure $\lambda_H$ and $\rho_H$ respectively, is a representation $U \in B(H) \ovtimes L^{\infty}(\mathbb{G})$ such that
    \begin{align*}
        U (\lambda_H(x) \otimes 1) = [(\lambda_H \otimes 1) \alpha(x)]U \text{ and } [(\rho_H(x^{\op}) \otimes 1)]U = U(\rho_H \otimes 1) \alpha^{\op}(x^{\op}) \text{ for all } x \in M
    \end{align*}
    We say the representation is of finite type if $H$ is a finite type Hilbert-$M$-bimodule as in definition \ref{def: Hilbert-M-bimodule}.
\end{definition}
Note that for any $W \subset I$ which is invariant under $\approx$, and any $\alpha$-equivariant representation $U$ of $\mathbb{G}$ on a Hilbert-$M$-bimodule $H$, we get that $\lambda_H(1_W)$ and $\rho_H(1_W^{\op})$ are $M$-bimodular self-intertwiners of $U$.

\begin{lemma} \label{lem: U_0 is equivariant}
    The unitary implementation $U_0$ from definition \ref{def: unitary implementation} is an $\alpha$-equivariant representation in the sense of definition \ref{def: equivariant representation} with respect to the bimodule structure introduced in example \ref{ex: bimodule structure on F_n}.
\end{lemma}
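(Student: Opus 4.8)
The plan is to verify the two defining relations of Definition \ref{def: equivariant representation} by slicing $U_0$ with the vector functionals $\omega_{x,y}$. On $L^2(M)$ the bimodule structure of Example \ref{ex: bimodule structure on F_n} (with $n=0$) is concretely $\lambda(a)\vect{w} = \vect{aw}$ and $\rho(b^{\op})\vect{w} = \vect{w\mu^{-1/2}(b)}$, and by Definition \ref{def: unitary implementation} we have $(\omega_{x,y}\otimes\id)U_0 = u_{x,y}$, so every slice can be expressed through the generators $u_{x,y}$. Since the family $\{\omega_{x,y} : x,y\in M_0\}$ is total in $B(L^2(M))_*$, an identity in $B(L^2(M))\ovtimes L^\infty(\mathbb{G})$ holds as soon as all its $(\omega_{x,y}\otimes\id)$-slices agree in $L^\infty(\mathbb{G})$; it thus suffices to check the two relations for $a\in M_0$ and to extend to $a\in M$ by normality. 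Throughout I use the elementary slice formulas $(\omega_{x,y}\otimes\id)[(A\otimes b)U_0] = b\,u_{A^*x,\,y}$ and $(\omega_{x,y}\otimes\id)[U_0(A\otimes b)] = u_{x,\,Ay}\,b$ for $A\in B(L^2(M))$, $b\in L^\infty(\mathbb{G})$, where $A^*x$ and $Ay$ abbreviate the vectors $A^*\vect{x}$ and $A\vect{y}$.

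For the left relation $U_0(\lambda(a)\otimes 1) = [(\lambda\otimes 1)\alpha(a)]U_0$, slicing the left-hand side gives $u_{x,ay}$, while expanding $\alpha(a) = \sum_z z\otimes u_{z,a}$ and slicing the right-hand side gives $\sum_{z\in\onb(\mathcal{F}_0)} u_{z,a}\,u_{z^*x,y}$. These coincide by the second equality of the second set in point 3 of Proposition \ref{prop: calculation on u_(x,y)} (the identity $u_{x,yz}=\sum_t u_{t,y}u_{t^*x,z}$, specialised to $u_{x,ay}=\sum_z u_{z,a}u_{z^*x,y}$). This case requires no modular corrections.

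For the right relation $(\rho(a^{\op})\otimes 1)U_0 = U_0(\rho\otimes 1)\alpha^{\op}(a^{\op})$, the left-hand slice is $u_{x\mu^{-1/2}(a^*),\,y}$, using $\rho(a^{\op})^*\vect{x} = \vect{x\mu^{-1/2}(a^*)}$. For the right-hand side I expand $\alpha^{\op}(a^{\op}) = (\op\otimes R)\alpha(a) = \sum_z z^{\op}\otimes R(u_{z,a})$ and apply point 3 of Lemma \ref{lem: modular data on generators} to write $R(u_{z,a}) = u_{\mu^{1/2}(a)^*,\,\mu^{1/2}(z)^*}$; slicing then yields $\sum_{z\in\onb(\mathcal{F}_0)} u_{x,\,y\mu^{-1/2}(z)}\,u_{\mu^{1/2}(a)^*,\,\mu^{1/2}(z)^*}$. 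The crux is to absorb the asymmetric twists in this sum via the basis-independent identity $\sum_{z\in\onb(\mathcal{F}_0)}\vect{\mu^{-1/2}(z)}\otimes\vect{\mu^{1/2}(z)^*} = \sum_{t\in\onb(\mathcal{F}_0)}\vect{t}\otimes\vect{t^*}$, which one checks directly on the explicit basis \eqref{eq: orthonormal basis of F_0} (after diagonalising $\sigma$ both sides equal $\sum_{i,k,l} d_i^{-1}s_{i,l}^{-1}\,\vect{E^i_{kl}}\otimes\vect{E^i_{lk}}$). Because each generator is linear in the vector occupying its moving slot, the bilinear expression in the sum is controlled entirely by this identity, so the sum rewrites as $\sum_{t\in\onb(\mathcal{F}_0)} u_{x,\,yt}\,u_{\mu^{1/2}(a)^*,\,t^*}$, which equals $u_{x\mu^{1/2}(a)^*,\,y}$ by the first equality of the first set in point 3 of Proposition \ref{prop: calculation on u_(x,y)}. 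Finally $\mu^{1/2}(a)^* = \mu^{-1/2}(a^*)$, so the two slices agree.

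The main obstacle is exactly the modular bookkeeping of the right relation: correctly threading $\op$, $R$ and the adjoint through $\alpha^{\op}$, and—most delicately—justifying the re-indexing of the orthonormal-basis sum carrying the mismatched twists $\mu^{-1/2}(z)$ and $\mu^{1/2}(z)^*$. The cleanest justification is the canonical-element identity above, combined with the remark that $u_{p,q}$ depends linearly on the vector in its moving slot (so that antilinearity in the fixed slots is irrelevant); one should also note that for fixed $x,y,a\in M_0$ the sums are effectively finite, so no convergence issue arises. The left relation, by contrast, drops straight out of Proposition \ref{prop: calculation on u_(x,y)}.
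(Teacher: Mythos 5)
Your proof is correct and follows essentially the same route as the paper: a direct verification of both intertwining relations by expanding everything in the generators $u_{x,y}$ and invoking the product rules of Proposition \ref{prop: calculation on u_(x,y)} together with the formula for $R$ from Lemma \ref{lem: modular data on generators}. The paper merely organizes the computation dually --- expanding $U_0$ and $U_0^*$ in rank-one operators $E_{a,b}$ and verifying the conjugated identities $U_0(\lambda(x)\otimes 1)U_0^* = (\lambda\otimes\id)\alpha(x)$ and $U_0^*(\rho(x^{\op})\otimes 1)U_0 = (\rho\otimes\id)\alpha^{\op}(x^{\op})$ instead of slicing by the vector functionals $\omega_{x,y}$ --- and your explicit re-indexing identity $\sum_{z}\vect{\mu^{-1/2}(z)}\otimes\vect{\mu^{1/2}(z)^*}=\sum_{t}\vect{t}\otimes\vect{t^*}$ is precisely the step the paper performs implicitly when it rewrites $\sum_{a,d}E_{a,d}\otimes u_{\mu^{1/2}(x)^*,a^*d}$ as $\sum_{z}\rho(z^{\op})\otimes u_{\mu^{1/2}(x)^*,\mu^{1/2}(z)^*}$.
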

\begin{proof}
    This follows more generally from \cite{Vaes01}, but nonetheless we include the calculation in our specific case. Denote for any $a,b \in M_0$ by $E_{a,b}$ the rank one operator $\xi \mapsto \langle \xi, \vect{b} \rangle \vect{a}$ in $B(L^2(M))$. We can calulate as follows, for arbitrary $x \in M$.
    \begin{align*}
        U_0(\lambda(x) \otimes 1)U_0^* =& \sum_{a,b,c,d \in \onb(\mathcal{F}_0)} E_{a,b} \lambda(x) E_{c,d} \otimes u_{a,b}u_{\mu(d)^*,c^*} \\
        =& \sum_{a,b,c,d \in \onb(\mathcal{F}_0} \del(b^*xc) E_{a,d} \otimes u_{a,b}u_{\mu(d)^*,c^*} \\
        =& \sum_{a,c,d \in \onb(\mathcal{F}_0} E_{a,d} \otimes u_{a,xc}u_{\mu(d)^*,c^*} \\
        =& \; \sum_{a,d \in \onb(\mathcal{F}_0)} E_{a,d} \otimes u_{a \mu(d)^*,x} \\
        =& \sum_{z \in \onb(\mathcal{F}_0)} \lambda(z) \otimes u_{z,x} \\
        =& (\lambda \otimes \id)\alpha(x)
    \end{align*}
    And also as follows.
    \begin{align*}
        U_0^*(\rho(x^{\op}) \otimes 1)U_0 =& \sum_{a,b,c,d \in \onb(\mathcal{F}_0)} E_{a,b}\rho(x^{\op})E_{c,d} \otimes u_{\mu(b)^*,a^*}u_{c,d} \\
        =& \sum_{a,b,c,d \in \onb(\mathcal{F}_0)} \del(b^*c \mu^{-1/2}(x)) E_{a,d} \otimes u_{\mu(b)^*,a^*}u_{c,d} \\
        =& \; \sum_{a,c,d \in \onb(\mathcal{F}_0)} E_{a,d} \otimes u_{\mu^{1/2}(x)^* \mu(c)^*,a^*} u_{c,d} \\
        =& \sum_{a,d \in \onb(\mathcal{F}_0)} E_{a,d} \otimes u_{\mu^{1/2}(x)^*,a^*d} \\
        =& \sum_{z \in \onb(\mathcal{F}_0)} \rho(z^{\op}) \otimes u_{\mu^{1/2}(x)^*,\mu^{1/2}(z)^*} \\
        =& (\rho \otimes \id) \alpha^{\op}(x^{\op})
    \end{align*}
    Hence $U_0$ is indeed a unitary $\alpha$-equivariant representation.
\end{proof}

The following lemma allows to take relative tensor products of equivariant representations. It is a special case of \cite[Definition 5.2]{DeCommerDeRo2024}.

\begin{lemma} \label{lem: relative tensor product of representations}
    Let $U$ and $V$ be two $\alpha$-equivariant representations of $\mathbb{G}$ on Hilbert-$M$-bimodules $H$ and $K$ respectively. Consider the projection $P: H \otimes K \to H \mtimes K$ where $P = \sum_{y \in \onb(\mathcal{F}_0)} (\rho_H(y^{\op}) \otimes \lambda_K(\mu^{1/2}(y^*))$. Then $(P \otimes 1)U_{13}V_{23}$ is an $\alpha$-equivariant representation of $\mathbb{G}$ on $H \mtimes K$. We will denote $U \odot V := (P \otimes 1)U_{13}V_{23}$.
\end{lemma}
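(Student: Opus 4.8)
The plan is to verify the three defining features of an $\alpha$-equivariant representation for the unitary $W := U_{13}V_{23}$ compressed by $P \otimes 1$: that $U \odot V$ restricts to a \emph{unitary} on $H \mtimes K$, that it is \emph{comultiplicative}, and that it satisfies the two \emph{equivariance} intertwining relations of definition \ref{def: equivariant representation} with respect to $\lambda_{H \mtimes K} = \lambda_H \otimes 1$ and $\rho_{H \mtimes K} = 1 \otimes \rho_K$. The heart of the matter is the first point, which amounts to showing that $W$ commutes with $P \otimes 1$; once this is known, the other two are comparatively mechanical.

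For the equivariance and comultiplicativity I would first record that $P$ is $M$-bimodular: since $P = \sum_y \rho_H(y^{\op}) \otimes \lambda_K(\mu^{1/2}(y^*))$, it commutes with every $\lambda_H(a) \otimes 1$ (these touch only the first leg, where $P$ contributes $\rho_H$, which commutes with $\lambda_H$) and with every $1 \otimes \rho_K(a^{\op})$ (symmetrically on the second leg). For the $\lambda$-equivariance of $U \odot V$ I would then push $\lambda_H(x)_1$ through $V_{23}$ (disjoint legs) and through $U_{13}$ using the $\lambda$-equivariance of $U$, and commute the resulting $[(\lambda_H \otimes \id)\alpha(x)]_{13}$ past $P \otimes 1$ by the bimodularity just noted; the $\rho$-equivariance is entirely symmetric, using instead the $\rho$-equivariance of $V$ and commuting $\rho_K(x^{\op})_2$ past $P \otimes 1$. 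For comultiplicativity, since $P$ carries no $\mathbb{G}$-leg we have $(\id \otimes \Delta)(P \otimes 1) = P \otimes 1 \otimes 1$, so the claim $(\id \otimes \Delta)(U \odot V) = (U \odot V)_{12}(U \odot V)_{13}$ reduces, after applying $(\id \otimes \Delta)$ to $W$ via the comultiplicativity of $U$ and $V$ (which turns it into $U_{13}V_{23}U_{14}V_{24}$), to the commutation $[P \otimes 1, W] = 0$, which lets one reinsert and absorb the extra copy of $P$.

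For the crux, note that by definition \ref{def: relative tensor product} the space $(H \mtimes K) \otimes L^2(\mathbb{G})$ is cut out inside $(H \otimes K) \otimes L^2(\mathbb{G})$ by the balancing relations $(\rho_H(x^{\op}))_1 \eta = (\lambda_K(\mu^{1/2}(x)))_2 \eta$ for all $x \in M_0$, so it suffices to show that $W$ and $W^*$ send such an $\eta$ to a vector satisfying the same relations. Fixing $\eta$ in the subspace, I would compute $(\rho_H(x^{\op}))_1 W \eta$ by first using the $\rho$-equivariance of $U$ together with the expansion $\alpha^{\op}(x^{\op}) = \sum_y y^{\op} \otimes R(u_{y,x})$, where $R(u_{y,x}) = u_{\mu^{1/2}(x)^*, \mu^{1/2}(y)^*}$ by lemma \ref{lem: modular data on generators}; then commuting the freed $\rho_H(y^{\op})_1$ back onto $\eta$ and invoking the balancing relation to replace it by $\lambda_K(\mu^{1/2}(y))_2$; and finally pushing this through $V_{23}$ by the $\lambda$-equivariance of $V$. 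Comparing with $(\lambda_K(\mu^{1/2}(x)))_2 W \eta$, the whole verification collapses to the single identity
\[
\sum_{y \in \onb(\mathcal{F}_0)} \bigl(1 \otimes R(u_{y,x})\bigr)\, \alpha(\mu^{1/2}(y)) = \mu^{1/2}(x) \otimes 1,
\]
equivalently, after slicing the first leg against $\del(a^* \cdot)$, to $\sum_y R(u_{y,x}) u_{a, \mu^{1/2}(y)} = \del(a^* \mu^{1/2}(x))\, 1$ for all $a \in M_0$.

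This last identity is where the real work sits, and I expect it to be the main obstacle. It is to be extracted from the convolution rules of proposition \ref{prop: calculation on u_(x,y)}(3) together with the antipode/modular formulae of lemma \ref{lem: modular data on generators} and the adjoint rule $u_{a,b}^* = u_{\mu(a)^*, b^*}$. The bookkeeping is delicate for two reasons: the $\lambda$- and $\rho$-equivariance relations have opposite handedness, so one must track carefully whether each module operator sits to the left or to the right of $U$ and $V$ before any convolution identity applies; and the twists by $\mu^{1/2}$ are not $*$-preserving, so that $\{\mu^{1/2}(y)^*\}_y = \{\mu^{-1/2}(y^*)\}_y$ is \emph{not} an orthonormal basis and the resolution-of-identity sums must be applied through $\mult^*(\vect{a}) = \sum_t \vect{t} \otimes \vect{t^* a}$ rather than by naively reindexing. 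Finally, the analogous computation for $W^* = V_{23}^* U_{13}^*$, reading the two equivariance relations in the opposite direction, shows that $W^*$ also preserves the subspace; together these give $[P \otimes 1, W] = 0$, and hence that $U \odot V$ is a well-defined unitary, comultiplicative, $\alpha$-equivariant representation on $H \mtimes K$.
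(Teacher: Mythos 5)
Your skeleton is the same as the paper's: everything rests on the commutation of $W = U_{13}V_{23}$ with $P \otimes 1$, which you attack with exactly the paper's three ingredients ($\rho$-equivariance of $U$, the expansion $\alpha^{\op}(x^{\op}) = \sum_y y^{\op} \otimes R(u_{y,x})$, and $\lambda$-equivariance of $V$), and from which comultiplicativity (reinsert $P$ using $P^2 = P$ and $[U_{14},V_{23}]=0$) and equivariance (bimodularity of $P$) follow just as you describe. Working on vectors of the balanced subspace and treating $W$ and $W^*$ separately, rather than manipulating operators as the paper does, is a cosmetic difference, and your reduction of the whole lemma to the displayed identity is correct. (One small slip: the lemma does not assume $U,V$ unitary, so you should speak of $W$ and $W^*$, equivalently $W$ and $W^{-1}$, preserving the subspace, rather than of $W$ \enquote{restricting to a unitary}; nothing in your argument changes.)

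The genuine gap is that your key identity
\[
\sum_{y \in \onb(\mathcal{F}_0)} R(u_{y,x})\, u_{a,\mu^{1/2}(y)} = \del\bigl(a^* \mu^{1/2}(x)\bigr)\, 1
\]
is asserted rather than proved, and it is precisely the mathematical core of the lemma: it is the step the paper performs implicitly, in one line, when it re-sums $\sum_y \lambda_K(\mu^{1/2}(y^*)) \otimes R(u_{z,y})$ into $(\lambda \otimes \id)\alpha(\mu^{1/2}(z^*))$. Fortunately it is not the obstacle you fear; the bookkeeping you anticipate collapses after one observation. Take the concrete basis \eqref{eq: orthonormal basis of F_0}: for $y = d_i^{-1/2} E^i_{kl}\sigma^{-1/2}$ one computes $\mu^{1/2}(y) = d_i^{-1/2}\sigma^{-1/2}E^i_{kl}$, so that $t := \mu^{1/2}(y)^* = d_i^{-1/2}E^i_{lk}\sigma^{-1/2}$ is again a basis vector and $\mu^{1/2}(y) = t^*$. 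Hence $y \mapsto \mu^{1/2}(y)^*$ permutes the basis and carries $\mu^{1/2}(y)$ to the adjoint of the image. Using $R(u_{y,x}) = u_{\mu^{1/2}(x)^*,\mu^{1/2}(y)^*}$ from lemma \ref{lem: modular data on generators}(3), your sum becomes
\[
\sum_{t \in \onb(\mathcal{F}_0)} u_{\mu^{1/2}(x)^*,t}\; u_{a,t^*},
\]
and proposition \ref{prop: calculation on u_(x,y)}(3), applied with the central cutoff $1_F \in M_0$ for finite $F \subset I$ (so that $\sum_t u_{\mu^{1/2}(x)^*,t}\,u_{a,t^* 1_F} = u_{\mu^{1/2}(x)^* a,\,1_F}$, the left side being exactly the partial sum over $t \in \onb(1_F \cdot \mathcal{F}_0)$), followed by proposition \ref{prop: calculation on u_(x,y)}(2) as $F \uparrow I$, shows that these partial sums converge strongly to $\del\bigl(a^*\mu^{1/2}(x)\bigr)1$, as required. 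Adding these few lines closes the gap and makes your proof complete, and essentially identical in content to the paper's.
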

\begin{proof}
    We start by noting the following.
    \begin{align*}
        (P \otimes 1)U_{13}V_{23} =& \; \sum_{y \in \onb(\mathcal{F}_0)} (\rho_H(y^{\op}) \otimes \lambda_K(\mu^{1/2}(y^*)) \otimes 1)U_{13}V_{23} \\
        =& \; \sum_{y \in \onb(\mathcal{F}_0)} U_{13}[(\rho_H \otimes 1)\alpha^{\op}(y^{\op})]_{13}(1 \otimes \lambda_K(\mu^{1/2}(y^*)) \otimes 1)V_{23} \\
        =& \; \sum_{y,z \in \onb(\mathcal{F}_0)} U_{13} (\rho_H(z^{\op}) \otimes \lambda_K(\mu^{1/2}(y^*)) \otimes u_{\mu^{1/2}(y^*),\mu^{1/2}(z^*)}) V_{23} \\
        =& \; \sum_{z \in \onb(\mathcal{F}_0)} U_{13} \left(\rho_H(z^{\op}) \otimes \left[ (\lambda \otimes \id)\alpha(\mu^{1/2}(z^*)) \right] \right) V_{23} \\
        =& U_{13}V_{23}(P \otimes 1)
    \end{align*}
    Using this, one readily checks that
    \begin{align*}
        (\id \otimes \id \otimes \Delta)(P \otimes 1)U_{13}V_{23} =& \; (P \otimes 1 \otimes 1)U_{13}U_{14}V_{23}V{24} \\
        =& \; (P \otimes 1 \otimes 1) U_{13}V_{23}(P \otimes 1 \otimes 1)U_{14}V_{24}
    \end{align*}
    and hence $(P \otimes 1)U_{13}V_{23}$ is indeed a representation of $\mathbb{G}$. $\alpha$-equivariance is immediate.
\end{proof}

\begin{corollary}
    $U_0$ acts as a monoidal unit with respect to the tensor product $\odot$ in the sense that for any $\alpha$-equivariant representation $U$ on a Hilbert-$M$-bimodule $H$, we have that $U \odot U_0$, $U$, and $U_0 \odot U$ are equivalent.
\end{corollary}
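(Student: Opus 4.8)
The plan is to use the canonical $M$-bimodular unitaries $L^2(M)\mtimes H\cong H\cong H\mtimes L^2(M)$ recalled in the preliminaries and show that they intertwine $U_0\odot U$ and $U\odot U_0$ with $U$. Write $m\colon L^2(M)\otimes H\to H,\ \vect{x}\otimes\xi\mapsto x\cdot\xi$ and $n\colon H\otimes L^2(M)\to H,\ \xi\otimes\vect{x}\mapsto\xi\cdot\mu^{1/2}(x)$ for the unrestricted maps whose restrictions to the relative tensor products are exactly those unitaries, say $\Phi$ and $\Psi$. Since equivalence of $\alpha$-equivariant representations amounts to the existence of a unitary $M$-bimodular intertwiner, and $\Phi,\Psi$ are already known to be such unitaries, it remains only to verify the two intertwining relations $(\Phi\otimes1)(U_0\odot U)=U(\Phi\otimes1)$ and $(\Psi\otimes1)(U\odot U_0)=U(\Psi\otimes1)$.

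First I would establish the two ``unrestricted'' intertwiners on the full tensor products,
\[
(m\otimes1)(U_0)_{13}U_{23}=U(m\otimes1),\qquad (n\otimes1)U_{13}(U_0)_{23}=U(n\otimes1).
\]
For the first, expand $U_0=\sum_{a,b}E_{a,b}\otimes u_{a,b}$ and compute that $(m\otimes1)(U_0)_{13}$ sends $\vect{c}\otimes\Xi$ to $\sum_a(\lambda_H(a)\otimes u_{a,c})\Xi$, using left $M$-linearity of $m$ and the collapse $\sum_b\langle\vect{c},\vect{b}\rangle u_{a,b}=u_{a,c}$. Feeding this through $U_{23}$ and comparing with $U(m\otimes1)=U(\lambda_H(c)\otimes1)$ via the left-equivariance $U(\lambda_H(c)\otimes1)=[(\lambda_H\otimes1)\alpha(c)]U=\sum_z(\lambda_H(z)\otimes u_{z,c})U$ makes the two sides agree term by term over the basis.

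The right-unit identity is the main obstacle, since it entangles the modular automorphism $\mu$ with the orthogonality of the $u_{a,b}$. Computing $(n\otimes1)U_{13}(U_0)_{23}$ on $\xi\otimes\vect{c}\otimes v$ gives $\sum_a(\rho_H(\mu^{1/2}(a)^{\op})\otimes1)U(1\otimes u_{a,c})(\xi\otimes v)$. Applying the right-equivariance $(\rho_H(y^{\op})\otimes1)U=U(\rho_H\otimes\id)\alpha^{\op}(y^{\op})$ together with $\alpha^{\op}(\mu^{1/2}(a)^{\op})=\sum_z z^{\op}\otimes u_{\mu(a)^*,\mu^{1/2}(z)^*}$ reduces the inner sum to $\sum_a u_{a,\mu^{1/2}(z)}^*u_{a,c}$ (using Proposition \ref{prop: calculation on u_(x,y)}(1)). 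Here I would invoke the orthogonality relation $\sum_a u_{a,b}^*u_{a,d}=\del(b^*d)\,1$, which is precisely the content of the unitarity $U_0^*U_0=1$, to obtain $\del(\mu^{1/2}(z)^*c)\,1$; a one-line trace computation then gives the modular identity $\del(\mu^{1/2}(z)^*c)=\del(z^*\mu^{1/2}(c))$, so that $\sum_z\del(\mu^{1/2}(z)^*c)\,\rho_H(z^{\op})=\rho_H(\mu^{1/2}(c)^{\op})$ by the orthonormal expansion of $\vect{\mu^{1/2}(c)}$. This recovers exactly $U(\rho_H(\mu^{1/2}(c)^{\op})\otimes1)=U(n\otimes1)$.

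Finally I would descend both identities to the relative tensor products. Writing $\iota$ for the inclusion $L^2(M)\mtimes H\hookrightarrow L^2(M)\otimes H$, so that $\Phi=m\circ\iota$ and $U_0\odot U$ is the compression $(\iota^*\otimes1)(U_0)_{13}U_{23}(\iota\otimes1)$, the commutation $(P\otimes1)(U_0)_{13}U_{23}=(U_0)_{13}U_{23}(P\otimes1)$ proved inside Lemma \ref{lem: relative tensor product of representations}, together with $P\iota=\iota$, lets me slide the projection past the representation and cancel it, giving $(\Phi\otimes1)(U_0\odot U)=(m\otimes1)(U_0)_{13}U_{23}(\iota\otimes1)=U(\Phi\otimes1)$. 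The right-hand case is identical with $n,\Psi$ and the projection $P'$ onto $H\mtimes L^2(M)$. I expect the only genuinely delicate point to be the bookkeeping of $\mu$ in the right-unit step; once the two unrestricted intertwiners are in hand, the rest is formal.
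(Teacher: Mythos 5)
Your proposal is correct and takes essentially the same approach as the paper: the paper's proof is a one-line assertion that the canonical unitary $M$-bimodular isomorphisms $L^2(M) \mtimes H \cong H \cong H \mtimes L^2(M)$ intertwine the representations (invoking Lemmas \ref{lem: U_0 is equivariant} and \ref{lem: relative tensor product of representations}), and you simply carry out the verification it leaves implicit, including the modular bookkeeping with $\mu^{1/2}$ and the descent through the projection $P$.
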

\begin{proof}
    Using lemmas \ref{lem: U_0 is equivariant} and \ref{lem: relative tensor product of representations}, the result follows easily by noting that the canonical unitary $M$-bimodular isomorphisms $L^2(M) \mtimes H \cong H \cong H \mtimes L^2(M)$ intertwine the representations.
\end{proof}

\begin{definition} \label{def: equivariant representation category}
    We define the $2$-category $\rep_f^{\alpha}(\mathbb{G})$ whose $0$-cells are the orbits of $\alpha$, whose $1$-cells are finite type unitary $\alpha$-equivariant representations of $\mathbb{G}$, and whose $2$-cells are $M$-bimodular bounded linear intertwiners of the representations. Composition of $1$-cells is given by the relative tensor product, and composition of $2$-cells by the ordinary composition of operators.
\end{definition}

The rest of this section will be dedicated to describing important properties of the category $\rep_f^{\alpha}(\mathbb{G})$. In particular, we will show that it abides by the conditions necessary for the reconstruction theorem \cite[Theorem 3.0.1]{Rollier2025}. To this end, the following lemma will be indispensible.

\begin{lemma} \label{lem: integrating intertwiners}
    Let $X$ and $Y$ be two $\alpha$-equivariant representations of $\mathbb{G}$ on Hilbert-$M$-bimodules $H$ and $K$ respectively. Let $T: K \to H$ be any bounded $M$-bimodular linear map such that there exists some finitely supported central projection $p \in M_0$ for which $T \circ \lambda_K(p) = T$. Then there exists a bounded $M$-bimodular linear map $\Phi(T)$, such that for any $i \in I$ we get
    \begin{align*}
        \lambda(1_i) \Phi(T) := (\id \otimes \varphi) \left[ \sum_{y \in \onb(1_i \cdot \mathcal{F}_0)} (\lambda_H(y) \otimes 1)X(T \otimes 1)Y^*(\lambda_K(y^*) \otimes 1) \right].
    \end{align*}
    This map satisfies
    \begin{align}
        X(\Phi(T) \otimes 1)Y^* = \Phi(T) \otimes 1. \label{eq: Phi(T) intertwines}
    \end{align}
\end{lemma}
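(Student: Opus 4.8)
The plan is to read the formula as an equivariant averaging of $T$, exactly analogous to the compact-group expression $\int_{\mathbb{G}} X_g\, T\, Y_g^*\, dg$, the extra sum over $\onb(1_i \cdot \mathcal{F}_0)$ being the bookkeeping forced upon us because we average over a bimodule and $\varphi$ is a weight rather than a state. Two things must be checked: that the right-hand side converges to a bounded $M$-bimodular operator $\Phi(T)$, and that this operator satisfies \eqref{eq: Phi(T) intertwines}. The engine behind the intertwining identity will be left invariance of the Haar weight $\varphi$, used in the slice form $(\id \otimes \id \otimes \varphi)\big[(\id \otimes \Delta)(W)\big] = (\id \otimes \varphi)(W) \otimes 1$, combined with the relations of Proposition \ref{prop: calculation on u_(x,y)}.

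For well-definedness I would argue block by block. Since $X$ and $Y$ are of finite type, $\lambda_H(y)$ and $\lambda_K(y^*)$ are finite rank for every $y \in M_0$, so each summand $(\lambda_H(y) \otimes 1)X(T \otimes 1)Y^*(\lambda_K(y^*) \otimes 1)$ has finite-rank first leg; moreover the hypothesis $T = T\lambda_K(p)$ confines $T$, and hence the whole expression, to the finitely many orbits meeting the support of the finite central projection $p$. Thus for each fixed $i$ the sum over $\onb(1_i \cdot \mathcal{F}_0)$ is finite, and the only genuine analytic point is that the first-leg-finite-rank element lies, in its group leg, in the domain of $\varphi$. This is exactly what properness (Definition \ref{def: generators of faithful part + proper action}) is designed to supply: a positivity and Cauchy--Schwarz estimate reduces the group-leg integrability to finiteness of $\varphi$ on positive coefficients of the type $u_{x,y}$ localized by $p$, which holds by assumption. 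Assembling the blocks into a single bounded operator, and checking $M$-bimodularity (immediate from bimodularity of $T$, the defining intertwining relations of Definition \ref{def: equivariant representation}, and centrality of the $1_i$), then finishes the construction.

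For the intertwining identity I would compute $X(\Phi(T) \otimes 1)Y^*$ in three legs, writing $\Phi(T)$ as a slice over leg $3$, the group leg of $X$ and $Y$ being the new leg $2$. Pulling $X_{12}$ and $Y_{12}^*$ inside the leg-$3$ slice (they act trivially on the third leg) and then using the equivariance relations of Definition \ref{def: equivariant representation} to carry $\lambda_H(y)$ through $X$ and $\lambda_K(y^*)$ through $Y^*$, each $y$-summand becomes
\[
\big[(\lambda_H \otimes \id)\alpha(y)\big]_{12}\; (\id \otimes \Delta)\big(X(T \otimes 1)Y^*\big)\; \big[(\lambda_K \otimes \id)\alpha(y^*)\big]_{12},
\]
where I have used the representation identity $X_{12}X_{13} = (\id \otimes \Delta)X$ and likewise for $Y$. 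Applying $\id \otimes \id \otimes \varphi$ and invoking left invariance collapses the coproduct to $C \otimes 1$ with $C := (\id \otimes \varphi)\big(X(T \otimes 1)Y^*\big)$ in the first leg. Expanding the two $\alpha$-factors as $\sum_z \lambda_H(z) \otimes u_{z,y}$ and $\sum_w \lambda_K(w) \otimes u_{w,y^*}$, what remains in the new group leg is then $\sum_{y} u_{z,y}\,u_{w,y^*}$, and Proposition \ref{prop: calculation on u_(x,y)}(3) evaluates this to $u_{zw,1} = \del(w^*z^*)\,1$, killing all group-leg dependence.

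It remains to recognise the resulting first-leg operator $\sum_{z,w} \del(w^*z^*)\,\lambda_H(z)\,C\,\lambda_K(w)$ as $\Phi(T)$. Since $\del(w^*z^*) = \langle \vect{z^*}, \vect{w} \rangle$, the Parseval relation for $\onb(\mathcal{F}_0)$ gives $\sum_w \del(w^*z^*)\lambda_K(w) = \lambda_K(z^*)$, so the expression becomes $\sum_z \lambda_H(z)\,C\,\lambda_K(z^*)$, which is precisely $\Phi(T)$ by its definition, since slicing the $y$-summand by $\id \otimes \varphi$ leaves $\lambda_H(y)\,C\,\lambda_K(y^*)$. This establishes \eqref{eq: Phi(T) intertwines}. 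I expect the main obstacle to be the well-definedness step rather than the identity: one must carefully justify that the unbounded weight $\varphi$ may be applied and sliced as above, and that the block-wise finite expressions assemble into a bounded operator, and it is precisely here that both defining conditions of properness, together with the finite-support hypothesis $T = T\lambda_K(p)$, are indispensable.
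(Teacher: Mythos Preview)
Your intertwining calculation is along the right lines and close in spirit to the paper's argument, which packages the same use of left invariance and Proposition~\ref{prop: calculation on u_(x,y)}(3) via the auxiliary operators $S_{y,z} := (\id \otimes \varphi)\big[(\lambda_H(y)\otimes 1)X(T\otimes 1)Y^*(\lambda_K(z^*)\otimes 1)\big]$; one small warning is that your global slice $C = (\id \otimes \varphi)\big(X(T\otimes 1)Y^*\big)$ is not a priori well-defined, so it is safer to keep the $\lambda_H(y),\lambda_K(z^*)$ localisations inside the weight throughout, as the paper does.

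The genuine gap is in the well-definedness step. First, you assert that $X$ and $Y$ are of finite type; the lemma makes no such assumption, and it is applied later (for instance in Lemmas~\ref{lem: equivalent to unitary} and~\ref{lem: decomposition into finite type}, and in Corollary~\ref{cor: uniqueness for equivariant tannaka-krein}) to representations that are emphatically not finite type. The finiteness of the sum over $\onb(1_i\cdot\mathcal{F}_0)$ comes simply from $1_i\cdot\mathcal{F}_0$ being finite-dimensional, not from any finite-rank property of $\lambda_H,\lambda_K$. Second, and more seriously, you never explain why the blocks $\lambda(1_i)\Phi(T)$ are \emph{uniformly} bounded in $i\in I$; you only check block-by-block integrability and then write ``assembling the blocks into a single bounded operator'' as though this were automatic. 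It is not: the norm of $\lambda(1_i)\Phi(T)$ involves $\varphi$ applied to expressions that grow with $i$. The paper handles this via an $M$-valued Cauchy--Schwarz bound, reducing to the estimate
\[
\sum_{y\in\onb(1_i\cdot\mathcal{F}_0)}(\id\otimes\varphi)\big[(\lambda_H(y)\otimes 1)X(\lambda_H(p)\otimes 1)X^*(\lambda_H(y^*)\otimes 1)\big]\;\le\;\norm{XX^*}\,\lambda_H\big[1_i(\mathfrak{e}\otimes\varphi)\alpha(p)\big],
\]
and then invoking Lemma~\ref{equivariant endomorphisms of M}: the quantity $d_i^{-2}\varphi(u_{\sigma_i^{-2},p})$ is constant on orbits (Lemma~\ref{lem: concrete invariance of functionals}), so $(\mathfrak{e}\otimes\varphi)\alpha(p)\in\ell^\infty(I)$. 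This orbit-constancy is the missing ingredient in your outline; without it there is no control over $\norm{\Phi(T)}$.
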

\begin{proof}
    A direct calculation immediately shows that $\Phi(T)$ is $M$-bimodular. It remains to show boundedness. By an $M$-valued version of the Cauchy-Schwarz inequality, it suffices to show that
    \begin{align*}
        \sum_{y \in \onb(1_i \cdot \mathcal{F}_0)} (\id \otimes \varphi) \left[ (\lambda_H(y) \otimes 1)X(\lambda_H(p) \otimes 1)X^*(\lambda_H(y^*) \otimes 1) \right]
    \end{align*}
    is uniformly bounded in $i \in I$. One may note however that this equals
    \begin{align*}
        \sum_{y \in \onb(1_i \cdot \mathcal{F}_0)} & (\id \otimes \varphi) \left[ (\lambda_H \otimes \id)[(y \otimes 1)\alpha(p)] XX^* (\lambda_H \otimes \id)[(y \otimes 1)\alpha(p)]^*
        \right] \\
        \leq & \norm{XX^*}  \lambda_H \left[ 1_i (\mathfrak{e} \otimes \varphi)(\alpha(p)) \right]
    \end{align*}
    which is uniformly bounded by lemma \ref{equivariant endomorphisms of M}.

    Finally, we show \eqref{eq: Phi(T) intertwines}. To this end, define for any $y,z \in M$
    \begin{align*}
        S_{y,z} := (\id \otimes \varphi) \left[ (\lambda_H(y) \otimes 1)X(T \otimes 1)Y^*(\lambda_K(z^*) \otimes 1) \right]
    \end{align*}
    by definition, we then have 
    \begin{align*}
        \Phi(T) = \sum_{x \in \onb(1_i \cdot \mathcal{F}_0)} S_{x,x},
    \end{align*}
    with convergence in the strong operator topology.
    Then a simple calculation using left invariance of $\varphi$ shows that for any $x \in M_0$, we have
    \begin{align*}
        S_{y,z} & \otimes u_{y,x}u_{z,x}^* = (\id \otimes \id \otimes \varphi) \left[ (\lambda_H(y) \otimes u_{y,x} \otimes 1)X_{12}X_{13}(T \otimes 1 \otimes 1)Y^*_{13}Y^*_{12}(\lambda_K(z^*) \otimes u_{z,x}^* \otimes 1) \right].
    \end{align*}
    Summing over $x \in \onb(\mathcal{F}_0)$ and $y\in \onb(1_i \cdot \mathcal{F}_0)$ and $z \in \onb(1_j \cdot \mathcal{F}_0)$, the left hand side becomes $\delta_{i,j} \lambda_H(1_i)\Phi(T) \otimes 1$. If we sum over $y \in \onb(1_i \cdot \mathcal{F}_0)$ and $z \in \onb(1_j \cdot \mathcal{F}_0)$, the right hand side becomes
    \begin{align*}
        (\lambda_H(1_i) \otimes 1)(\id& \otimes \id \otimes \varphi) \left[ ((\lambda_H \otimes \id)\alpha(x) \otimes 1)X_{12}X_{13}(T \otimes 1 \otimes 1)Y^*_{13}Y^*_{12}((\lambda_K \otimes \id)\alpha(x^*) \otimes 1) \right] (\lambda_K(1_j) \otimes 1) \\
        =& (\lambda_H(1_i) \otimes 1) X \left[\left( (\id \otimes \varphi)(\lambda_H(x) \otimes 1)X(T \otimes 1)Y^*(\lambda_K(x^*) \otimes 1) \right) \otimes 1 \right]Y^* (\lambda_K(1_j) \otimes 1) \\
        =& \; (\lambda_H(1_i) \otimes 1)X(S_{x,x} \otimes 1)Y^*(\lambda_K(1_j) \otimes 1)
    \end{align*}
    Summing over $x \in \onb(\mathcal{F}_0)$, this then becomes $(\lambda_H(1_i) \otimes 1)X(\Phi(T) \otimes 1)Y^*(\lambda_K(1_j) \otimes 1)$. Now we can finish since
    \[
    \Phi(T) \otimes 1 = \sum_{i,j \in I} \delta_{i,j} \lambda_H(1_i) \Phi(T) \otimes 1 = \sum_{i,j \in I} (\lambda_H(1_i) \otimes 1)X(\Phi(T) \otimes 1)Y^*(\lambda_K(1_j) \otimes 1) = X(\Phi(T) \otimes 1)Y^*.
    \]
\end{proof}

\begin{remark}
    Note that by a very similar argument, we may also take any bounded $M$-bimodular $T: K \to H$ such that $T \circ \rho_K(p^{\op}) = T$ for some central projection $p \in M_0$, from which we can then define an $M$-bimodular bounded linear map $\Psi(T)$ such that for any $i \in I$
    \begin{align*}
        \rho_H(1_i)\Psi(T) := (\id \otimes \psi) \left[ \sum_{y \in \onb(\mathcal{F}_0 \cdot 1_i)} (\rho_H((y^*)^{\op}) \otimes 1) X^*(T \otimes 1)Y(\rho_K(y^{\op}) \otimes 1) \right].
    \end{align*}
    This map then satisfies
    \begin{align*}
        X^*(\Psi(T) \otimes 1)Y = \Psi(T) \otimes 1.
    \end{align*}
\end{remark}

\begin{lemma} \label{lem: equivalent to unitary}
    Every $\alpha$-equivariant representation of $\mathbb{G}$ is equivalent to a unitary $\alpha$-equivariant representation.
\end{lemma}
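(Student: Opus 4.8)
The plan is to reduce unitarizability to the construction of a single positive, invertible, $M$-bimodular self-intertwiner and then conjugate. Suppose $U$ is an $\alpha$-equivariant representation on a finite type Hilbert-$M$-bimodule $H$, which as a corepresentation I take to be bounded with bounded inverse. I claim it suffices to produce a positive invertible $M$-bimodular $Q \in B(H)$ satisfying the invariance
\[
U^*(Q \otimes 1)U = Q \otimes 1.
\]
Granting such a $Q$, set $U' := (Q^{1/2} \otimes 1)U(Q^{-1/2} \otimes 1)$. Since $Q^{1/2}$ is $M$-bimodular it commutes with every $\lambda_H(y) \otimes 1$ and $\rho_H(y^{\op}) \otimes 1$, hence with $(\lambda_H \otimes 1)\alpha(x)$ and $(\rho_H \otimes 1)\alpha^{\op}(x^{\op})$; a direct computation then shows $U'$ is again an $\alpha$-equivariant representation, equivalent to $U$ via the invertible $M$-bimodular intertwiner $Q^{1/2}$. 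A short calculation reduces $U'^*U' = 1$ to $U^*(Q \otimes 1)U = Q \otimes 1$ and $U'U'^* = 1$ to $U(Q^{-1} \otimes 1)U^* = Q^{-1} \otimes 1$, and for invertible $U$ these two identities are equivalent, so the displayed invariance alone suffices.

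For the construction of $Q$ I would use the averaging map $\Psi$ from the remark following Lemma \ref{lem: integrating intertwiners}, with $X = Y = U$, which by design outputs $M$-bimodular operators obeying exactly $U^*(\Psi(T) \otimes 1)U = \Psi(T) \otimes 1$. The naive choice $T = \id_H$ is inadmissible, since its $\psi$-average produces the non-integrable constant $1$ in the $\mathbb{G}$-leg, reflecting that one cannot normalise the counting measure over an infinite orbit. Instead I fix a transversal $J \subset I$ meeting each orbit in exactly one point and set
\[
Q := \sum_{\mathcal{O} \in \mathcal{E}} Q_{\mathcal{O}}, \qquad Q_{\mathcal{O}} := \Psi\!\left(\rho_H(1_{i_0(\mathcal{O})}^{\op})\right),
\]
where $i_0(\mathcal{O}) \in J$ is the chosen representative of the orbit $\mathcal{O}$. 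Each $\rho_H(1_{i_0(\mathcal{O})}^{\op})$ is finite rank by finite type and finitely supported, so each $Q_{\mathcal{O}}$ is a well-defined positive self-intertwiner by the remark; moreover the orbit structure forces $\rho_H(1_j^{\op})Q_{\mathcal{O}} = 0$ unless $j \in \mathcal{O}$, so the sum converges strongly with $\rho_H(1_j^{\op})Q = \rho_H(1_j^{\op})Q_{\mathcal{O}_j}$ a single term. Positivity and the invariance equation for $Q$ are then inherited termwise.

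The main obstacle is to show that $Q$ is bounded above and, crucially, bounded below (invertible), uniformly across the infinitely many orbits. For boundedness above I would route the defining $\psi$-average through Lemma \ref{equivariant endomorphisms of M}: the normalising quantities $d_i^{-2}\psi(u_{e,\sigma_i^{-2}})$ are constant along orbits by Lemma \ref{lem: concrete invariance of functionals}, which is exactly what makes the fibrewise averages assemble into a bounded element of $\ell^{\infty}(I)$ rather than diverging. For invertibility I would argue that, because $U$ is invertible, one has $U^*U \geq \varepsilon$, so the base-point average is faithful and strictly positive on each fibre; combined with faithfulness of $\psi$ and the fact that within an orbit every index is linked to its representative through the nonvanishing coefficients $u_{1_i,1_{i_0}}$, this yields a uniform lower bound $Q \geq \varepsilon' > 0$. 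Establishing this two-sided estimate — rather than the surrounding algebra, which is routine — is where properness and finite type do the real work, and is the step I expect to require the most care.
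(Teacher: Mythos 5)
Your overall strategy is the paper's: reduce to producing a positive, bounded, invertible, $M$-bimodular self-intertwiner, build it by applying the averaging map of Lemma \ref{lem: integrating intertwiners} (you use the right-handed $\Psi$, the paper uses $\Phi$) to one central projection per orbit, sum over orbits, and conjugate by the square root. But there is a genuine gap at exactly the step you flag as the crucial one, and your proposed way of closing it does not work. With the \emph{unnormalised} input $T = \rho_H(1_{i_0(\mathcal{O})}^{\op})$, the two-sided estimate coming from $m1 \leq U^*U \leq s1$ reads
\[
m\, c_{\mathcal{O}}\, \rho_H(1_{\mathcal{O}}^{\op}) \;\leq\; Q_{\mathcal{O}} \;\leq\; s\, c_{\mathcal{O}}\, \rho_H(1_{\mathcal{O}}^{\op}), \qquad c_{\mathcal{O}} := \frac{\varphi(u_{\sigma_{i_0}^{-2},1_{i_0}})}{d_{i_0}^2},
\]
where $1_{\mathcal{O}}$ is the indicator of the orbit: Lemmas \ref{equivariant endomorphisms of M} and \ref{lem: concrete invariance of functionals} say precisely that the averaged multiplier is the constant $c_{\mathcal{O}}$ times $1_{\mathcal{O}}$, with $c_{\mathcal{O}}$ constant \emph{along} each orbit but varying freely \emph{between} orbits. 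Nothing in properness bounds the family $(c_{\mathcal{O}})_{\mathcal{O}}$ above or below: already for a classical group acting properly on infinitely many orbits, $c_{\mathcal{O}}$ is essentially the Haar measure of the stabiliser of a point in $\mathcal{O}$, which can tend to $0$ or to $\infty$ across orbits. Since a (even finite type) bimodule can meet infinitely many orbits, your $Q = \sum_{\mathcal{O}} Q_{\mathcal{O}}$ can fail to be bounded, and its lower bound $\inf_{\mathcal{O}} m\,c_{\mathcal{O}}$ can be $0$, destroying invertibility. Your suggested remedy --- faithfulness of $\psi$, strict positivity on each fibre, and the linking coefficients $u_{1_i,1_{i_0}}$ --- only gives strict positivity orbit by orbit; it cannot produce a uniform $\varepsilon' > 0$ over infinitely many orbits.

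The missing idea, which is the actual content of the paper's proof, is a per-orbit normalisation: average the rescaled projection, i.e.\ take $p_{\mathcal{O}} = \frac{d_{i_0}^2}{\varphi(u_{\sigma_{i_0}^{-2},1_{i_0}})} 1_{i_0}$, so that $(\mathfrak{e} \otimes \varphi)\alpha(p_{\mathcal{O}})$ is \emph{exactly} $1_{\mathcal{O}}$. Then every orbitwise average $\Phi(\lambda_H(p_{\mathcal{O}}))$ is pinched between $m\,\lambda_H(1_{\mathcal{O}})$ and $s\,\lambda_H(1_{\mathcal{O}})$ with the \emph{same} constants $m,s$ for all orbits, so the sum $T$ over orbit representatives satisfies $m \leq T \leq s$ and is therefore bounded and invertible; conjugation then finishes as you describe. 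A secondary issue: you assume $H$ is of finite type, but the lemma must hold for arbitrary $\alpha$-equivariant representations --- the paper invokes it in the proof of Lemma \ref{lem: decomposition into finite type}, before any finite type decomposition is available --- and in fact nothing in the construction requires finite type, so that hypothesis should simply be dropped.
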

\begin{proof}
    Let $X$ be any $\alpha$-equivariant representation of $\mathbb{G}$ on a Hilbert-$M$-bimodule $H$. Then by invertibility and boundedness of $X$, there exist $m,s \in \mathbb{R}^+$ such that $m1 \leq XX^* \leq s1$. Hence, it follows that for any central $p \in M_0$, we get
    \begin{align*}
        m \cdot \lambda_H[(\mathfrak{e} \otimes \varphi)\alpha(p)] \leq \Phi(\lambda_H(p)) \leq s \cdot \lambda_H[(\mathfrak{e} \otimes \varphi)\alpha(p)]
    \end{align*}
    If we take $p = \frac{d_i^2}{\varphi(u_{\sigma_i^{-2},1_i})}1_i$, then $(\mathfrak{e} \otimes \phi)\alpha(p)$ is the indicator function of the equivalence class of $i$ under $\approx$. Choosing such a representative $x$ of every equivalence class, and summing all $\Phi(\lambda_H(x))$ we get in this way, we obtain a positive, bounded, invertible, $M$-bimodular linear map $T: H \to H$ which satisfies
    \begin{align*}
        T \otimes 1 = X(T \otimes 1)X^*
    \end{align*}
    Then clearly, $Y := (T^{-1/2} \otimes 1)X(T^{1/2} \otimes 1)$ is an equivalent unitary $\alpha$-equivariant representation of $\mathbb{G}$.    
\end{proof}

\begin{lemma} \label{lem: decomposition into irreducibles}
    Every finite type $\alpha$-equivariant representation of $\mathbb{G}$ is equivalent to a direct sum of irreducible representations.
\end{lemma}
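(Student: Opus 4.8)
By Lemma \ref{lem: equivalent to unitary}, I may replace the given finite type $\alpha$-equivariant representation $X$ on $H$ by an equivalent \emph{unitary} one, so assume $X$ is unitary. The object to analyse is the self-intertwiner algebra
\[
\End(X) := \{ T \in B(H) \mid T \text{ is } M\text{-bimodular and } X(T \otimes 1)X^* = T \otimes 1 \}.
\]
Since $X$ is unitary and $\lambda_H, \rho_H$ are $*$-homomorphisms, $\End(X)$ is closed under adjoints, and it is clearly a weakly closed subalgebra of $B(H)$, so it is a von Neumann algebra. A decomposition of $X$ into irreducibles is exactly a decomposition of the unit of $\End(X)$ into mutually orthogonal minimal projections: a minimal projection $q \in \End(X)$ is $M$-bimodular and intertwines $X$, so $qH$ is an invariant Hilbert-$M$-sub-bimodule carrying a unitary subrepresentation $X_q := X(q \otimes 1)$, and $\End(X_q) = q \End(X) q = \mathbb{C}q$ forces $X_q$ to be irreducible.

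\textbf{Producing minimal projections.} The essential input is the finite type hypothesis of Definition \ref{def: Hilbert-M-bimodule}: for every $i \in I$ the projection $\lambda_H(1_i)$ is finite rank, so $V_i := \lambda_H(1_i) H$ is a \emph{finite-dimensional} Hilbert space. Because every $T \in \End(X)$ is $M$-bimodular, $T$ commutes with $\lambda_H(1_i)$; hence compression $\pi_i : \End(X) \to B(V_i)$, $T \mapsto \lambda_H(1_i) T \lambda_H(1_i)$, is a normal unital $*$-homomorphism into the finite-dimensional algebra $B(V_i)$. Its kernel is a weakly closed two-sided ideal, so $\ker \pi_i = (1-z_i)\End(X)$ for a central projection $z_i \in \End(X)$, and $z_i \End(X) \cong \pi_i(\End(X))$ is therefore \emph{finite-dimensional}. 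If $H \neq 0$, then $\sum_i \lambda_H(1_i) = 1$ forces some $V_i \neq 0$, so some $z_i \neq 0$; a minimal projection $q$ of the finite-dimensional C$^*$-algebra $z_i \End(X)$ then satisfies $q \leq z_i$ and, since every projection of $\End(X)$ below $z_i$ already lies in $z_i\End(X)$, it is minimal in $\End(X)$ itself. Thus every nonzero finite type unitary equivariant representation contains an irreducible subrepresentation.

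\textbf{Exhausting the representation.} I would finish with a standard Zorn's lemma argument. Consider a maximal family $\{q_\lambda\}$ of mutually orthogonal minimal projections in $\End(X)$, and set $p := \sum_\lambda q_\lambda$, which converges in the strong operator topology to a projection $p \in \End(X)$. If $p \neq 1$, then $1-p$ is a nonzero $M$-bimodular self-intertwiner projection, and the subrepresentation on $(1-p)H$ is again of finite type: $(1-p)$ commutes with each $\lambda_H(1_i)$ and $\rho_H(1_j^{\op})$, so the corresponding structure maps are compressions of the finite rank operators $\lambda_H(1_i), \rho_H(1_j^{\op})$ and remain finite rank. By the previous paragraph this subrepresentation contains a minimal projection orthogonal to every $q_\lambda$, contradicting maximality. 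Hence $p = 1$ and $X \cong \bigoplus_\lambda X_{q_\lambda}$ is a direct sum of irreducibles.

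\textbf{Main obstacle.} The routine parts are the von Neumann algebra bookkeeping (central support of $\pi_i$, transfer of minimality to $\End(X)$, SOT-convergence of the orthogonal sum) and the verification that restrictions to $(1-p)H$ stay finite type. The one genuinely load-bearing step is the identification of finiteness: it is precisely the finite type condition that turns each $V_i$ into a finite-dimensional space and hence each corner $z_i \End(X)$ into a finite-dimensional algebra, which is what guarantees the existence of minimal projections. Everything else is a faithful transcription of the Woronowicz-style complete reducibility argument, with finite rank of $\lambda_H(1_i)$ replacing the global finite-dimensionality available in the compact setting.
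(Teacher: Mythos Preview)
Your argument is correct, but it takes a genuinely different route from the paper's. The paper first restricts to a single orbit $W$ (using that $\lambda_H(1_W)$ is a self-intertwiner) and then invokes the integration map $\Phi$ of Lemma~\ref{lem: integrating intertwiners}---hence properness of $\alpha$---to show directly that the compression $\End(X)\to B(1_i\cdot H),\ T\mapsto \lambda_H(1_i)T$ is \emph{injective} for every $i\in W$; this forces $\End(X)$ itself to be finite-dimensional, and the decomposition into minimal projections is then immediate and finite. You sidestep the integration lemma entirely: instead of proving $\pi_i$ injective you pass to the central support $z_i$ of its kernel, extract minimal projections from the finite-dimensional corner $z_i\End(X)$, and exhaust with Zorn. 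This is more elementary and uses properness only through the unitarization Lemma~\ref{lem: equivalent to unitary}. The trade-off is twofold: first, your decomposition is a priori an infinite direct sum, whereas the paper obtains finitely many irreducibles per orbit; second, you do not establish the injectivity of $T\mapsto \lambda_H(1_i)T$ (resp.\ $Q\mapsto \rho_H(1_k^{\op})Q$) on the intertwiner algebra, a fact the paper explicitly reuses in the proofs of Lemma~\ref{lem: decomposition into finite type} and Corollary~\ref{cor: conjugate equations have bounded solution}.
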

\begin{proof}
    Let $X$ be any $\alpha$-equivariant representation of $\mathbb{G}$ on a finite type Hilbert-$M$-bimodule $H$. We may assume that $X$ is unitary, so that $\text{End}(X)$ is a $C^*$-algebra.
    Note first that since $\lambda_H(1_W)$ is an endomorphism of $X$ for any $W \subset I$ which is invariant under $\approx$, we may assume that there is an equivalence class $W$ of $\approx$ such that $\lambda_H(1_W) = 1$.
    We will show that the map $\text{End}(X) \to B(1_i \cdot H): T \mapsto \lambda_H(1_i)T$ is injective for every $i \in W$, and then by finite dimensionality of $\lambda_H(1_i)H$, the $C^*$-algebra $\text{End}(X)$ must be finite-dimensional, so that we can decompose by minimal projections.

    Take then $i \in W$ arbitrarily, and suppose that $\lambda_H(1_i)T = 0$ for some $T \in \text{End}(X)$. Then we can calculate as follows for any $j \in I$.
    \begin{align*}
        0 =& \sum_{y \in \onb(1_j \cdot \mathcal{F}_0)}(\id \otimes \varphi)[(\lambda_H(y) \otimes 1)X(\lambda_H(1_i)T\lambda_H(1_i) \otimes 1)X^*(\lambda_H(y^*) \otimes 1)] \\
        =& \sum_{y \in \onb(1_j \mathcal{F}_0)} (\id \otimes \varphi) \left[ (\lambda_H \otimes \id)[(y \otimes 1)\alpha(1_i)] (T \otimes 1) (\lambda_H \otimes \id)[\alpha(1_i)(y^* \otimes 1)] \right] \\
        =& \lambda_H[1_j (\mathfrak{e} \otimes \varphi)\alpha(1_i)] T \\
        =& \delta_{j \in W} \frac{\varphi(u_{\sigma_i^{-2},1_i})}{d_i^2} \lambda_H(1_j) T 
    \end{align*}
    Summing over all $j \in W$, it follows that $T = 0$, and hence the claim holds.
\end{proof}

\begin{lemma} \label{lem: decomposition into finite type}
    Every $\alpha$-equivariant representation of $\mathbb{G}$ decomposes into a direct sum of finite type $\alpha$-equivariant representations.
\end{lemma}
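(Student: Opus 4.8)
The plan is to reduce, via Lemma \ref{lem: equivalent to unitary}, to the case of a unitary representation $X$ on $H$, and then to peel off finite type subrepresentations one at a time with a Zorn's lemma argument. Since $X$ is unitary, $\text{End}(X)$ is a von Neumann algebra and the orthogonal complement of any subrepresentation is again a subrepresentation, so it suffices to prove the Key Claim: \emph{every nonzero unitary $\alpha$-equivariant representation contains a nonzero finite type subrepresentation}. Granting this, a maximal mutually orthogonal family of nonzero finite type subrepresentations must have closed span equal to all of $H$ (otherwise the Key Claim applied to its orthogonal complement would contradict maximality), which is the desired decomposition. As $\lambda_H(1_W)$ is a central self-intertwiner for each orbit $W$, I may moreover assume throughout that $\lambda_H(1_W) = 1$ for a single orbit $W$, and fix some $i_0 \in W$ with $\lambda_H(1_{i_0}) \neq 0$.

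To prove the Key Claim, I would first record that the computation already carried out in the proof of Lemma \ref{lem: decomposition into irreducibles} shows, \emph{without} any finite type hypothesis, that for every $j \in W$ the map $\beta_j \colon \text{End}(X) \to B(\lambda_H(1_j)H)$, $T \mapsto T\lambda_H(1_j)$, is a faithful normal $*$-homomorphism (faithfulness being exactly the content of that displayed averaging identity). Next, choose a block $(i_0,j_0)$ on which $H$ is nonzero and let $q$ be the finite rank, $M$-bimodular orthogonal projection onto the sub-bimodule spanned by a single vector of the multiplicity space of $\lambda_H(1_{i_0})\rho_H(1_{j_0}^{\op})H$. Applying Lemma \ref{lem: integrating intertwiners} with $Y = X$ and $T = q$ produces a positive endomorphism $\Phi(q) \in \text{End}(X)$, which is nonzero since $q \neq 0$, $X$ is invertible, and $\varphi$ is faithful.

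The heart of the argument, and the step I expect to be the main obstacle, is to show that for every $j \in W$ the operator $\beta_j(\Phi(q)) = \lambda_H(1_j)\Phi(q)$ is \emph{compact} (indeed trace class) on $\lambda_H(1_j)H$. The crude bound from the boundedness proof of Lemma \ref{lem: integrating intertwiners} only yields a multiple of $\lambda_H(1_j)$ and is therefore too weak; the improvement must genuinely use that $q$ has finite rank together with properness in the sense of Definition \ref{def: generators of faithful part + proper action}. Concretely, one expands $\Phi(q)$ in terms of the matrix coefficients $c_{\eta,\xi} := (\omega_{\eta,\xi} \otimes \id)X$, and the point is that these are square integrable against $\varphi$: this is the analogue of Peter--Weyl square integrability and should follow by writing $c_{\eta,\xi}$, for $\xi,\eta$ in fixed blocks, as a combination of the generators $u_{x,y}$ via equivariance, each of which lies in the domain of $\varphi$ by properness. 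The orbit invariance of the relevant $\varphi$-masses from Lemma \ref{lem: concrete invariance of functionals} then ensures the estimate is uniform enough to conclude compactness on \emph{every} block $j$, not merely on $j = i_0$.

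Granting block-compactness, I would finish as follows. For small $\varepsilon > 0$ the spectral projection $p := \mathbbm{1}_{[\varepsilon,\infty)}(\Phi(q)) \in \text{End}(X)$ is nonzero, and since each $\beta_j$ is a normal $*$-homomorphism it commutes with the continuous functional calculus, so $\beta_j(p) = \mathbbm{1}_{[\varepsilon,\infty)}(\lambda_H(1_j)\Phi(q))$ is the spectral projection above $\varepsilon$ of a compact positive operator, hence finite rank. Thus $\lambda_H(1_j)pH$ is finite dimensional for every $j \in W$. Running the symmetric construction with the right module structure, the weight $\psi$, and the map $\Psi$ from the remark following Lemma \ref{lem: integrating intertwiners} — and using the second half of Lemma \ref{lem: concrete invariance of functionals} — yields finiteness of $\rho_H(1_j^{\op})pH$ as well, so that $pH$ is a nonzero finite type subrepresentation. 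This establishes the Key Claim and completes the proof.
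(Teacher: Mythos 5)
Your overall skeleton (reduction to a unitary representation, Zorn's lemma, the averaging map $\Phi$ from Lemma \ref{lem: integrating intertwiners}, faithfulness of the block compressions $\beta_j$, and spectral projections of a compact positive intertwiner) matches the paper's proof. But the step you yourself flag as the heart of the argument --- compactness of the block compressions of $\Phi(q)$ --- has a genuine gap, and the mechanism you propose for it does not work. Your claim that equivariance lets one write a matrix coefficient $c_{\eta,\xi} = (\omega_{\eta,\xi}\otimes\id)X$ of a general equivariant representation ``as a combination of the generators $u_{x,y}$'' is false: equivariance only yields relations of the form $c_{\eta,\,x\cdot\xi} = \sum_{y \in \onb(\mathcal{F}_0)} u_{y,x}\, c_{y^*\cdot\eta,\,\xi}$, i.e.\ it expresses coefficients of $X$ as products of generators with \emph{other} coefficients of $X$, never as elements of $\mathrm{span}\{u_{x,y}\}$ (which is exactly the coefficient space of $U_0$). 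A blunt counterexample: for a compact $\mathbb{G}$ acting trivially on $M=\mathbb{C}$ the action is proper, every representation is equivariant, and $\mathrm{span}\{u_{x,y}\}=\mathbb{C}1$, while coefficients of nontrivial representations are not scalars. So square-integrability of the $c_{\eta,\xi}$ is not established; and even if it were, compactness of $\lambda_H(1_j)\Phi(q)$ would still not follow, since a Gram-type operator built from an infinite family of $L^2(\mathbb{G})$-vectors need not be compact without a summability (Schur-orthogonality-type) estimate --- which is essentially what one is trying to prove at this stage.

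The paper's proof closes exactly this hole by a $C^*$-algebraic argument that your proposal never invokes, and which is where the \emph{first} properness condition ($u_{x,y} \in C_r(\mathbb{G})$) enters: since any unitary representation satisfies $X \in \mathcal{M}(\mathcal{K}(H)\otimes_r C_r(\mathbb{G}))$, and the finite sum $\sum_{t\in\onb(1_j\cdot\mathcal{F}_0)} T\rho_H(t^{\op})\otimes R(u_{t,1_k})$ lies in $\mathcal{K}(H)\otimes_r C_r(\mathbb{G})$ (here compactness of $T$ and membership of the $u$'s in $C_r(\mathbb{G})$ are both essential), conjugation by $X$ stays in $\mathcal{K}(H)\otimes_r C_r(\mathbb{G})$; slicing with the bounded functionals $\varphi(u_{x,1_i}\,\cdot\,u_{z,1_i})$ (bounded by the second properness condition) then lands in $\mathcal{K}(H)$. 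Moreover, the paper does not stop at blockwise compactness: the strict convergence $\sum_l \lambda_H(1_l) = 1$ against a fixed element of $\mathcal{K}(H)\otimes_r C_r(\mathbb{G})$ gives \emph{norm} convergence of $\sum_l \lambda_H(1_l)\rho_H(1_k^{\op})\Phi(T)$, hence compactness of the full right-block compression $\rho_H(1_k^{\op})\Phi(T)$; note that the analogous norm convergence over right blocks $k$ (which your per-left-block strategy would need) fails, because $\sum_k u_{t,1_k}$ converges only strongly, not in norm. Finally, a smaller slip: running the symmetric construction with $\Psi$ and $\psi$ produces a \emph{further} subrepresentation of $pH$ with finite-rank right blocks, not finiteness of $\rho_H(1_j^{\op})pH$ itself; the paper instead gets all right blocks of its subrepresentation finite rank at once from the injectivity of $Q \mapsto \rho_{H_k}(1_k^{\op})Q$ into a finite-dimensional $C^*$-algebra. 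As it stands, your proposal is a correct outline whose decisive analytic step is missing.
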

\begin{proof}
    Let $X$ be some $\alpha$-equivariant representation of $\mathbb{G}$ on a Hilbert-$M$-bimodule $H$. We may assume that $X$ is unitary, and that there are equivalence classes $W,W'$ of $\approx$ such that $\lambda_H(1_W) = \id_H = \rho_H(1_{W'}^{\op})$. Fix $i \in W$ and $j \in W'$ with $\lambda_H(1_i) \rho_H(1_j^{\op}) \neq 0$. Let $T \in \mathcal{K}(H)$ be any positive, nonzero, compact, $M$-bimodular operator which is bounded above by $\lambda_H(1_i) \rho_H(1_j^{\op})$. Then recall that by the definition of $\Phi$ (lemma \ref{lem: integrating intertwiners}) and $\alpha$-equivariance of $X$, for any $k,l \in I$, we get that
    \begin{align*}
        &\lambda_H(1_l)\rho_H(1_k^{\op}) \Phi(T) \\
        &= \sum_{y \in \onb(1_l \cdot \mathcal{F}_0)} (\id \otimes \varphi) \left[ (\lambda_H \otimes \id)[(y \otimes 1)\alpha(1_i)]X (T \otimes 1)[(\rho_H \otimes R)\alpha(1_k)]X^* (\lambda_H \otimes \id)[\alpha(1_i)(y^* \otimes 1)]\right] \\
        &= \sum_{\substack{x,y,z \in \onb(1_l \cdot \mathcal{F}_0) \\ t \in \onb(1_j \cdot \mathcal{F}_0)}} (\id \otimes \varphi) \left[ (\lambda_H(yx) \otimes u_{x,1_i}) X (T \rho_{H}(t^{\op}) \otimes R(u_{t,1_k})) X^* (\lambda_H(zy^*) \otimes u_{z,1_i}) \right]
    \end{align*}
    Now, as $X \in \mathcal{M}(\mathcal{K}(H) \otimes_r C_r(\mathbb{G}))$, we get that
    \[
    X \left( \sum_{t \in \onb(1_j \cdot \mathcal{F}_0)} T \rho_H(t^{\op}) \otimes R(u_{t,1_k}) \right) X^* \in \mathcal{K}(H) \otimes_r C_r(\mathbb{G}).
    \]
    Hence, $\lambda_H(1_l) \rho_H(1_k^{\op}) \Phi(T)$ is compact, and moreover, that 
    \[
    \norm{(\lambda_H(1_l) \otimes 1) X \left( \sum_{t \in \onb(1_j \cdot \mathcal{F}_0)} T \rho_H(t^{\op}) \otimes R(u_{t,1_k}) \right) X^*} \to 0 \text{ for } l \to \infty.
    \]
    Therefore,
    \[
    \sum_{l \in I} (\lambda_H(1_l) \otimes 1) X \left( \sum_{t \in \onb(1_j \cdot \mathcal{F}_0)} T \rho_H(t^{\op}) \otimes R(u_{t,1_k}) \right) X^*
    \]
    converges in norm to $X \left( \sum_{t \in \onb(1_j \cdot \mathcal{F}_0)} T \rho_H(t^{\op}) \otimes R(u_{t,1_k}) \right) X^*$, and finally, we get that
    \[
    \sum_{l \in I} \lambda_H(1_l) \rho_H(1_k^{\op}) \Phi(T) = \rho_H(1_k^{\op}) \Phi(T)
    \]
    is also a convergence in norm, so that $\rho_H(1_k^{\op}) \Phi(T)$ is compact.

    By faithfulness of $\id \otimes \varphi$ and positivity of $T$, $\Phi(T)$ is also nonzero, so we can use functional calculus of compact operators to find an intertwiner $P_k$ of $X$ for which $\rho_H(1_k^{\op})P_k$ is a nonzero finite rank projection. Since $C^*(\Phi(T)) \to B(H \cdot 1_k): Q \mapsto \rho_H(1_k^{\op})Q$ is an injective $*$-homomorphism as in the proof of lemma \ref{lem: decomposition into irreducibles}, this shows that $X$ admits an $\alpha$-equivariant subrepresentation $X_k$ on a Hilbert-$M$-subbimodule $H_k$ for which $\rho_{H_k}(1_k^{\op})$ is finite rank. 

    Now suppose $\rho_{H_k}(1_l^{\op})$ is not finite rank for some $l \in I$. Then we can inductively repeat the previous argument infinitely many times to find projections $P^1_l, P^2_l, \ldots$ which sum to $\id_{H_k}$, which intertwine $X_k$, and for which $\rho_{H_k}(1_l^{\op})P^n_l$ is finite rank for every $n \in \mathbb{N}$. However, again since $Q \mapsto \rho_{H_k}(1_k^{\op})Q$ is an injective $*$-homomorphism from the intertwiner space of $X_k$ into a finite dimensional $C^*$-algebra, we must get that all but finitely many of these $P^n_l$ are zero, which is in contradiction with the assumption that $\rho_{H_k}(1_l^{\op})$ is not finite rank. Hence, $H_k$ is a finite type Hilbert-$M$-subbimodule of $H$.
    
    By Zorn's lemma, $X$ now completely decomposes into finite type $\alpha$-equivariant subrepresentations.
\end{proof}

\begin{corollary} \label{cor: split into finite type irreducible unitaries}
    Together, lemmas \ref{lem: equivalent to unitary}, \ref{lem: decomposition into irreducibles}, and \ref{lem: decomposition into finite type} show that every $\alpha$-equivariant representation of $\mathbb{G}$ is equivalent to a direct sum of irreducible, unitary ones, which are necessarily finite type.
\end{corollary}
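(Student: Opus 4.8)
The plan is to chain the three preceding lemmas together, the one point requiring attention being the order of application: Lemma~\ref{lem: decomposition into irreducibles} applies only to \emph{finite type} representations, so the reduction to finite type summands must precede the decomposition into irreducibles.

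Concretely, start with an arbitrary $\alpha$-equivariant representation $X$ of $\mathbb{G}$ on a Hilbert-$M$-bimodule $H$. By Lemma~\ref{lem: equivalent to unitary} we may replace $X$ by an equivalent unitary representation, so assume $X$ is unitary. Then Lemma~\ref{lem: decomposition into finite type} gives a decomposition $X \cong \bigoplus_k X_k$ into finite type $\alpha$-equivariant subrepresentations; each $X_k$ is the restriction of the unitary $X$ to an invariant sub-bimodule cut out by an intertwining projection, hence is again unitary. Applying Lemma~\ref{lem: decomposition into irreducibles} to each finite type piece yields $X_k \cong \bigoplus_l X_{k,l}$ with every $X_{k,l}$ irreducible, and reindexing produces a single direct sum $X \cong \bigoplus_{k,l} X_{k,l}$ of irreducible unitary representations.

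It remains to see that these irreducibles are of finite type. Each $X_{k,l}$ lives on a sub-bimodule of the finite type bimodule underlying $X_k$, and restricting $\lambda_H(x)$ and $\rho_H(x^{\op})$ to an invariant subspace only compresses operators that are already finite rank for $x \in M_0$; hence the sub-bimodule is again finite type. This also explains the word ``necessarily'': any irreducible $\alpha$-equivariant representation is finite type, since Lemma~\ref{lem: decomposition into finite type} would otherwise split it nontrivially, contradicting irreducibility unless it already coincides with a single finite type summand.

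I do not anticipate a genuine obstacle, as the corollary is purely an assembly of the three lemmas; the only care needed is to apply them in the correct order and to check that unitarity and finite-typeness pass to the subrepresentations produced at each stage, both of which are immediate from the compression description above.
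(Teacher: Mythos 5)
Your proposal is correct and takes essentially the same route as the paper: the corollary carries no separate proof there, being precisely the assembly of Lemmas \ref{lem: equivalent to unitary}, \ref{lem: decomposition into finite type}, and \ref{lem: decomposition into irreducibles} in the order you give (unitarize, split into finite type pieces, then decompose each piece into irreducibles). Your added checks --- that unitarity and finite-typeness pass to subrepresentations cut out by intertwining projections, and that irreducibility forces finite type via Lemma \ref{lem: decomposition into finite type} --- are exactly the routine verifications the paper leaves implicit.
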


\begin{lemma} \label{lem: scaled representation}
    Let $U$ be any $\alpha$-equivariant representation of $\mathbb{G}$ on a Hilbert-$M$-bimodule $H$. For any $t \in \mathbb{R}$, we denote by $H_t$ the Hilbert-$M$-bimodule which as a Hilbert space is just $H$, but with the bimodule structure twisted such that
    \[
    \lambda_{H_t}(x) = \lambda_H(\mu^{-it}(x)) \text{ and } \rho_{H_t}(x^{\op}) = \rho_H(\mu^{-it}(x)^{\op}).
    \]
    Then $U_t := (\id \otimes \tau_t)U$ is an $\alpha$-equivariant representation of $\mathbb{G}$ on $H_t$. Moreover, for any $t \in \mathbb{R}$, we have that $U_t$ is equivalent to $U$.
\end{lemma}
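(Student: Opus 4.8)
The plan is to verify the two claims separately: that $U_t$ is an $\alpha$-equivariant representation on $H_t$, and that it is equivalent to $U$.

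For the structural claim, I would first check that $U_t$ is a corepresentation, i.e. $(\id \otimes \Delta)U_t = (U_t)_{12}(U_t)_{13}$. This follows immediately from the standard identity $\Delta \circ \tau_t = (\tau_t \otimes \tau_t) \circ \Delta$: applying $\id \otimes \Delta$ to $U_t = (\id \otimes \tau_t)U$ and using $(\id \otimes \Delta)U = U_{12}U_{13}$ turns the right-hand side into $(\id \otimes \tau_t \otimes \tau_t)(U_{12}U_{13}) = (U_t)_{12}(U_t)_{13}$. For $\alpha$-equivariance I would verify the left relation $U_t(\lambda_{H_t}(x) \otimes 1) = [(\lambda_{H_t} \otimes 1)\alpha(x)]U_t$ by applying the automorphism $\id \otimes \tau_{-t}$ to both sides. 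The left side becomes $U(\lambda_H(\mu^{-it}(x)) \otimes 1)$, which by equivariance of $U$ equals $[(\lambda_H \otimes 1)\alpha(\mu^{-it}(x))]U$. On the right side, after expanding $(\lambda_{H_t} \otimes 1)\alpha(x) = \sum_{y} \lambda_H(\mu^{-it}(y)) \otimes u_{y,x}$ and using $\tau_{-t}(u_{y,x}) = u_{\mu^{-it}(y),\mu^{-it}(x)}$ from Lemma \ref{lem: modular data on generators}, I would reindex the sum by $z = \mu^{-it}(y)$. The crucial point is that $\mu^{it}$ is conjugation by the unitary $\sigma^{it}$, hence a $\del$-preserving $*$-automorphism of $M_0$; it therefore sends any orthonormal basis of $\mathcal{F}_0$ to another one, so by basis-independence of these sums the reindexed expression also equals $[(\lambda_H \otimes 1)\alpha(\mu^{-it}(x))]U$. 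The relation for $\rho_{H_t}$ is entirely analogous, using $\alpha^{\op}(x^{\op}) = \sum_y y^{\op} \otimes R(u_{y,x})$ together with $R \tau_t = \tau_t R$.

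For the equivalence, I would first record that $\End(U_t) = \End(U)$: the bimodularity condition is unchanged because $\mu^{-it}(M) = M$, and since $\id \otimes \tau_t$ is an automorphism fixing every $T \otimes 1$, the relation $(T \otimes 1)U_t = U_t(T \otimes 1)$ is equivalent to $(T \otimes 1)U = U(T \otimes 1)$. In particular $U_t$ is finite type and irreducible whenever $U$ is, so by Corollary \ref{cor: split into finite type irreducible unitaries} I may assume $U$ irreducible. Next, since each component $\sigma_i^{it}$ is unitary of norm one, $\sigma^{it} \in M$, and I would introduce the canonical bimodular unitary $S_t := \lambda_H(\sigma^{it})\rho_H((\sigma^{-it})^{\op}) : H \to H_t$; conjugation by $\lambda_H(\sigma^{it})$ implements $\lambda_H \circ \mu^{-it}$ and likewise on the right, so $S_t$ is an isomorphism of the underlying bimodules. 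A direct computation on the matrix units $E_{a,b}$ shows that for $U = U_0$ this same $S_t$ already intertwines $U_0$ with $(U_0)_t$.

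The genuine obstacle is that $S_t$ only handles the bimodule twist, whereas a general $U$ carries its own representation-theoretic scaling that $S_t$ does not see; indeed for the trivial action on $M = \mathbb{C}$ one has $S_t = 1$ while $(\id \otimes \tau_t)U \neq U$ in general. What is really needed is that $U$ and $(\id \otimes \tau_t)U$ are equivalent as corepresentations. I would obtain this from the canonical strictly positive scaling operator $Q$ attached to the corepresentation $U$, characterised by $(\id \otimes \tau_t)U = (Q^{it} \otimes 1)U(Q^{-it} \otimes 1)$, so that $T := Q^{it}$ is an invertible intertwiner of $U$ and $U_t$. The hardest step is then to show that $T$ is $M$-bimodular as a map $H \to H_t$, i.e. that conjugation by $Q^{it}$ implements $\mu^{-it}$ on both $\lambda_H(M)$ and $\rho_H(M^{\op})$; I would prove this by feeding the defining relation of $Q$ into the equivariance identities of $U$ and invoking uniqueness of the positive implementing operator, the point being that $Q$ must factor as the bimodule-twist operator $\lambda_H(\sigma)\rho_H((\sigma^{-1})^{\op})$ composed with a positive operator commuting with the $M$-action, exactly as in the model case $U = U_0$. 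I would note in passing that the tempting shortcut of averaging $S_t$ through the map $\Phi$ of Lemma \ref{lem: integrating intertwiners} is circular here, since non-vanishing of $\Phi(S_t)$ is equivalent to the very equivalence one is trying to establish.
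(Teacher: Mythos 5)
Your first half (that $U_t$ is again an $\alpha$-equivariant representation on $H_t$) is correct and is essentially the paper's own computation: both rest on $\tau_t(u_{y,x}) = u_{\mu^{it}(y),\mu^{it}(x)}$ together with the observation that $\mu^{it}$ is a $\del$-preserving $*$-automorphism of $M_0$ and hence permutes orthonormal bases of $\mathcal{F}_0$.

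The equivalence $U \sim U_t$, however, has a genuine gap. You invoke ``the canonical strictly positive scaling operator $Q$ attached to the corepresentation $U$'', characterised by $(\id \otimes \tau_t)U = (Q^{it} \otimes 1)U(Q^{-it} \otimes 1)$. No such operator is available a priori: for a general unitary representation of a general locally compact quantum group it does not exist, and in the present setting its existence is precisely what this lemma is needed for --- the paper only constructs $\Omega_U$ (definition \ref{def: Omega}) \emph{after} the lemma, by applying Stone's theorem to the one-parameter family of unitary intertwiners that the lemma provides. The fact you are importing is a theorem about \emph{finite-dimensional} representations (where it follows, e.g., via the Bohr compactification and the usual $F$-matrices of compact quantum group theory), whereas here $H$ is infinite-dimensional even when $U$ is irreducible of finite type. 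The general principle is actually false within the scope of this paper: take $\mathbb{G} = \widehat{SU_q(2)}$, a discrete quantum group, which acts properly on itself. Unitary representations of $\mathbb{G}$ correspond to $*$-representations of $C(SU_q(2))$, and $(\id \otimes \tau_t)U$ corresponds to composing with the scaling group of $SU_q(2)$, which multiplies the generator $\gamma$ by the phase $q^{2it}$. On the standard infinite-dimensional irreducibles $\pi_\theta$, with $\pi_\theta(\gamma)e_n = e^{i\theta}q^n e_n$, this rotates the parameter $\theta$ by $2t\ln q$, producing a representation inequivalent to $\pi_\theta$ for generic $t$; so for such $U$ no intertwiner, let alone a positive $Q$, can exist. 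The existence of $Q$ is thus a consequence of equivariance plus properness and cannot be quoted as a property of the corepresentation alone: your argument assumes what is to be proven.

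Moreover, your closing remark dismissing the averaging route as circular is mistaken --- that route is exactly the paper's proof, and it is not circular. For $U$ unitary and irreducible the paper forms
\[
T_t := \sum_{y \in \onb(\mathcal{F}_0)} (\id \otimes \varphi)\, (\lambda_{H_t}(y) \otimes 1)U_t(\lambda_H(1_i) \otimes 1)U^*(\lambda_{H}(y^*) \otimes 1),
\]
a bounded $M$-bimodular intertwiner from $U$ to $U_t$ as in lemma \ref{lem: integrating intertwiners}. Its non-vanishing is not assumed but \emph{proved} for small $t$: for a fixed $\xi$ with $1_i \cdot \xi = \xi$, the quantity $\omega_{\xi,\xi}(T_t)$ is a finite sum of matrix coefficients of $U_tU^*$, hence continuous in $t$ by strong continuity of $\tau$, and it is nonzero at $t = 0$; Schur's lemma then yields $U \sim U_t$ on a neighbourhood of $0$, and an infimum/connectedness bootstrap (applying the same continuity argument at the infimum of bad parameters) extends this to all $t \in \mathbb{R}$. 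That non-vanishing of an intertwiner is \emph{equivalent} to the desired equivalence does not make the argument circular, since the non-vanishing is established independently at small $t$ and then propagated. This continuity-plus-bootstrap step is the idea missing from your proposal.
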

\begin{proof}
    It is clear that $U_t$ is a representation. We show $\alpha$-equivariance as follows, using the fact that $\{\mu^{it}(y) | y \in \onb(\mathcal{F}_0)\}$ is of course an $\onb(\mathcal{F}_0)$ for any $t \in \mathbb{R}$.
    \begin{align*}
        U_t (\lambda_{H_t}(x) \otimes 1) &= (\id \otimes \tau_t) U (\lambda_H(\mu^{-it}(x)) \otimes 1) \\
        &=(\id \otimes \tau_t) [(\lambda_H \otimes \id)\alpha(\mu^{-it}(x))U] \\
        &= \sum_{y \in \onb(\mathcal{F}_0)} (\lambda_H(y) \otimes \tau_t(u_{y, \mu^{-it}(x)})) U_t \\
        &= \sum_{y \in \onb(\mathcal{F}_0)} (\lambda_H(\mu^{-it}(y)) \otimes u_{y,x}) U_t \\
        &= (\lambda_{H_t} \otimes \id)\alpha(x) U_t
    \end{align*}
    Right equivariance is shown similarly.

    To show that $U_t$ is equivalent to $U$, we suppose first that $U$ is unitary and irreducible. Fix then some $i \in I$, and recall that
    \begin{equation*}
        T_t := \sum_{y \in \onb(\mathcal{F}_0)}(\id \otimes \varphi) (\lambda_{H_t}(y) \otimes 1)U_t(\lambda_H(1_i) \otimes 1)U^*(\lambda_{H}(y^*) \otimes 1) \label{eq: U equivalent to U_t}
    \end{equation*}
    is a bounded $M$-bimodular intertwiner from $U$ to $U_t$. We will show that $T_t$ is nonzero for small values of $t$, whence it then follows that it must be a scalar multiple of a unitary by irreducibility of $U$. Fix $\xi \in H$ with $1_i \cdot \xi = \xi$. Consider the following map.
    \[
    \mathbb{R} \ni t \mapsto \omega_{\xi,\xi}(T_t) \in \mathbb{C}
    \]
    Note first that $\omega_{\xi,\xi}(T_0) \neq 0$, so it suffices to show continuity of this map. We calculate as follows.
    \begin{align*}
        \omega_{\xi,\xi}(T_t) &= \sum_{x,y,z \in \onb(1_i \cdot \mathcal{F}_0)} (\omega_{\xi,\xi} \otimes \varphi) \left[ (\lambda_{H_t}(yx) \otimes u_{x,1_i}) U_t U^* (\lambda_{H}(zy^*) \otimes u_{z,1_i}) \right] \\
        &= \sum_{x,y,z \in \onb(1_i \cdot \mathcal{F}_0)} \left\langle U_tU^* (zy^* \cdot \xi \otimes \vect{u_{z,1_i}}) , \mu^{-it}(x^*y^*) \cdot \xi \otimes \vect{u_{x,1_i}^*} \right\rangle
    \end{align*}
    Where $\vect{u_{x,1_i}}$ denotes $u_{x,1_i}$ seen as an element of $L^2(\mathbb{G})$. Now, by SOT-continuity of $t \mapsto \tau_t$, we also get that $t \mapsto U_tU^*$ is WOT-continuous, so the above is a finite sum of continuous functions, and hence we find a neighbourhood of $0$ for which $T_t \neq 0$ such that $U_t$ is equivalent to $U$. Now, suppose still not all $U_t$ are equivalent. Then let $t_0$ be the infimum of all $t > 0$ for which $U_t$ is not equivalent to $U$. Repeating this argument for $U_{t_0}$, we find a small neighbourhood of $t_0$ around which all $U_t$ are equivalent to $U_{t_0}$, but this is in contradiction with $t_0$ being an infimum.

    The result for arbitrary $U$ now follows from decomposing into irreducible unitary $\alpha$-equivariant subrepresentations.
\end{proof}

Fix now $U$, some $\alpha$-equivariant unitary representation of $\mathbb{G}$ on a Hilbert-$M$-bimodule $H$. By the previous proposition, we must find some strongly continuous one parameter group of unitaries $A_t \in B(H)$ such that $(A_{-t} \otimes 1)U(A_t \otimes 1) = U_t$. By Stone's theorem \cite{Stone1932}, we can then write $A_t = \exp(itB)$ for some unique possibly unbounded self-adjoint operator $B$. Writing $\Omega_U = \exp(B)$, we get a possibly unbounded positive operator $\Omega_U$ on $H$ such that $U_t = (\Omega_U^{-it} \otimes 1) U (\Omega_U^{it} \otimes 1)$.

\begin{definition} \label{def: Omega}
    For every unitary $\alpha$-equivariant representation $U$ of $\mathbb{G}$ on a Hilbert-$M$-bimodule $H$, we let $\Omega_U$ be the unbounded operator on $H$ such that $U_t = (\Omega_U^{-it} \otimes 1) U (\Omega_U^{it} \otimes 1)$, where $U_t$ is as in \ref{lem: scaled representation}. By decomposing into irreducible unitary representations, this $\Omega_U$ exists and is unique for every $U$.
\end{definition}
One may note now that because $\Omega_{U}^{it}$ must be $M$-bimodular with respect to the bimodule structures on $H$ and $H_t$, we must have $\Omega_U(x \cdot \xi \cdot y) = \mu(x) \cdot \Omega_U(\xi) \cdot \mu(y)$ for any $\xi$ in the domain of $\Omega_U$ and any $x,y \in M_0$. This goes to show that $\Omega_U$ restricts to $1_i \cdot H \cdot 1_j$ for any $i,j \in I$. In particular, by positivity we must then have that whenever $H$ is finite type, we can find a basis of eigenvectors of $\Omega_U$ in $H_0 := M_0 \cdot H \cdot M_0$. Let $\xi,\eta$ be such eigenvectors, and denote $\Uelt{\xi}{\eta} := (\omega_{\xi,\eta} \otimes \id)U$. Then we must have $\tau_t(\Uelt{\xi}{\eta}) = \Uelt{\Omega_U^{it}(\xi)}{\Omega_U^{it}(\eta)}$. It follows that $\tau_t$ is analytic on $\Uelt{\xi}{\eta}$, and that $\tau_{-i/2}(\Uelt{\xi}{\eta}) = \Uelt{\Omega_U^{-1/2}(\xi)}{\Omega_U^{1/2}(\eta)}$. By linearity, this formula generalises to any $\xi,\eta \in H_0$. Using also the fact that $S(\Uelt{\xi}{\eta}) = \Uelt{\eta}{\xi}^*$, we now find that $R(\Uelt{\xi}{\eta}) = \Uelt{\Omega_U^{-1/2}(\eta)}{\Omega_U^{1/2}(\xi)}^*$.

\begin{definition} \label{def: conjugate representation}
    Let $U$ be any finite type $\alpha$-equivariant representation of $\mathbb{G}$ on a Hilbert-$M$-bimodule $H$. Denote by $\overline{H}$ the dual space to $H$ with the standard $M$-bimodule structure given by $x \cdot \overline{\xi} \cdot y := \overline{y^* \cdot \xi \cdot x^*}$, and by $J: B(H) \to B(\overline{H})$ the anti-isomorphism such that $J(T) \overline{\xi} = \overline{T^* \xi}$. Then denote $\overline{U} := (J \otimes R)U$. Then $\overline{U}$ is an $\alpha$-equivariant representation of $\mathbb{G}$ on $\overline{H}$, which we call the conjugate representation of $U$.
\end{definition}
\begin{proof}[Proof that definition \ref{def: conjugate representation} is well-defined]
    Since $J \otimes R$ is an anti-isomorphism, and $(R \otimes R) \circ \Delta = \chi \circ \Delta \circ R$, where $\chi$ denotes the flip map $a \otimes b \mapsto b \otimes a$, we automatically get that $\overline{U}$ is a representation. Now, $\alpha$-equivariance follows immediately from applying $(J \otimes R)$ to the equations establishing $\alpha$-equivariance of $U$.
\end{proof}

\begin{definition} \label{def: solution to conjugate equations}
    Let $U$ be any unitary $\alpha$-equivariant representation of $\mathbb{G}$ on a finite type Hilbert-$M$-bimodule $H$. We define the following operators.
    \begin{align*}
        &s_U: \mathcal{F}_0 \to H \mtimes \overline{H}: \vect{x} \mapsto \sum_{\xi \in \onb(H_0)} x \cdot \xi \otimes \Omega_{\overline{U}}^{1/2}(\overline{\xi}) \\
        &t_U: \mathcal{F}_0 \to \overline{H} \mtimes H: \vect{x} \mapsto \sum_{\xi \in \onb(H_0)} x \cdot \Omega_{\overline{U}}^{-1/2}(\overline{\xi}) \otimes \xi
    \end{align*}
    Note that for any $x \in M_0$, the sums defining $s_U(\vect{x})$ and $t_U(\vect{x})$ are finite, and therefore well-defined. As this requires $x \in M_0$, a priori we only have well-definedness on $\mathcal{F}_0$. However, we will see in corollary \ref{cor: conjugate equations have bounded solution} that very often $s_U$ and $t_U$ are bounded, and can therefore be extended to $L^2(M)$.
\end{definition}

\begin{lemma} \label{lem: conjugate equations for unitary finite type}
    Consider $U$, some unitary $\alpha$-equivariant representation of $\mathbb{G}$ on a finite type Hilbert-$M$-bimodule $H$. Let $\overline{U}$ be its conjugate representation as in \ref{def: conjugate representation}. Then $s_U$ and $t_U$ as in definition \ref{def: solution to conjugate equations} intertwine $U_0$ and $U \odot \overline{U}$ resp. $\overline{U} \odot U$.
\end{lemma}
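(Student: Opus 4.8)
The plan is to verify directly the three conditions making $s_U$ a morphism from $U_0$ to $U \odot \overline U$ in the sense of Definition \ref{def: equivariant representation category}: that $s_U$ takes values in $H \mtimes \overline H$, that it is $M$-bimodular, and that it satisfies the intertwining relation $(U \odot \overline U)(s_U \otimes 1) = (s_U \otimes 1)U_0$; the statement for $t_U$ is then the mirror image. Before anything else I would record the relation between the two modular operators, namely $\Omega_{\overline U}^{1/2}(\overline\xi) = \overline{\Omega_U^{-1/2}(\xi)}$ for every $\Omega_U$-eigenvector $\xi \in H_0$. This follows from $\overline U = (J \otimes R)U$ together with $R \tau_t = \tau_t R$ and the defining property of $\Omega$ in Definition \ref{def: Omega}, upon noting that $(J \otimes R)$ reverses products and that $J(\Omega_U^{it})\overline\xi = \overline{\Omega_U^{-it}\xi}$. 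It is precisely this relation that makes the twist $\Omega_{\overline U}^{1/2}$ appearing in $s_U$ cancel against the $\Omega$-factors produced by the conjugate representation.

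For well-definedness and bimodularity I would use two routine facts: the module-twisting identity $\Omega^{1/2}(a \cdot \zeta) = \mu^{1/2}(a) \cdot \Omega^{1/2}(\zeta)$ (immediate from the property of $\Omega$ recorded after Definition \ref{def: Omega}), and the ``maximally entangled vector'' reindexing $\sum_\xi A\xi \otimes \overline\xi = \sum_\xi \xi \otimes \overline{A^*\xi}$, valid for any operator $A$ and the chosen eigenbasis of $H_0$. Feeding the right $M$-action of $a \in M_0$ through $s_U(\vect x)$ and reindexing with $A = \rho_H(a^{\op})$ turns the defining condition $(\rho_H(a^{\op}) \otimes 1)\zeta = (1 \otimes \lambda_{\overline H}(\mu^{1/2}(a)))\zeta$ of $H \mtimes \overline H$ into an identity, so $s_U(\vect x) \in H \mtimes \overline H$. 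Bimodularity of $s_U: \mathcal F_0 \to H \mtimes \overline H$ is the same computation, now using $a \cdot \vect x \cdot b = \vect{ax\mu^{-1/2}(b)}$ and reindexing the left action of $\mu^{-1/2}(b)$ onto the conjugate leg.

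The heart of the proof is the intertwining relation, and here I would argue by slicing the Hilbert-space legs. Since $s_U \otimes 1$ acts trivially on the $\mathbb{G}$-leg, the relation is equivalent to the family of identities $(\omega_{c \otimes \overline d,\, s_U(\vect y)} \otimes \id)(U \odot \overline U) = (\omega_{s_U^*(c \otimes \overline d),\, \vect y} \otimes \id)U_0$ in $L^\infty(\mathbb{G})$, for $c,d \in H_0$ and $\vect y \in \mathcal F_0$. Expanding the left side with $U \odot \overline U = U_{13}\overline U_{23}(P \otimes 1)$ from Lemma \ref{lem: relative tensor product of representations}, and using the leg identity $(\omega_{a \otimes \overline b, c \otimes \overline d} \otimes \id)(U_{13}\overline U_{23}) = \Uelt{a}{c}\, \overline U_{\overline b, \overline d}$, I would substitute $\overline U_{\overline b, \overline e} = R(\Uelt{e}{b})$ and the modular formula $R(\Uelt{\xi}{\eta}) = \Uelt{\Omega_U^{-1/2}(\eta)}{\Omega_U^{1/2}(\xi)}^*$ recorded before Definition \ref{def: conjugate representation}. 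By the relation between the $\Omega$'s these $\Omega$-factors cancel and the left side collapses to $\sum_{\xi \in \onb(H_0)} \Uelt{c}{y\xi}\, \Uelt{\Omega_U^{-1/2}(d)}{\xi}^*$. Writing $\Uelt{c}{y\xi} = \sum_{z} u_{z,y}\, \Uelt{z^* \cdot c}{\xi}$ via $\alpha$-equivariance of $U$, and then applying the orthogonality relation $\sum_\xi \Uelt{a}{\xi}\Uelt{b}{\xi}^* = \langle b, a\rangle 1$ (which follows from $UU^* = 1$ and $S((\omega \otimes \id)U) = (\omega \otimes \id)U^*$), the $\xi$-sum collapses to $\sum_z \langle z \cdot \Omega_U^{-1/2}(d), c \rangle\, u_{z,y}$. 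Finally I would compute $s_U^*(c \otimes \overline d)$ directly — it is the vector $\vect w \in \mathcal F_0$ determined by $\langle \vect x, \vect w\rangle = \langle x \cdot \Omega_U^{-1/2}(d), c\rangle$ — and expand $(\omega_{\vect w, \vect y} \otimes \id)U_0 = u_{w,y}$ in the orthonormal basis to recover exactly $\sum_z \langle z \cdot \Omega_U^{-1/2}(d), c\rangle\, u_{z,y}$, matching the two sides.

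I expect the main obstacle to be purely one of bookkeeping: correctly tracking the two independent $\Omega^{1/2}$-twists (one built into $s_U$, one produced by $R$ via the conjugate representation), the relative-tensor-product projection $P$, and the conjugate-linearity of the various vector functionals, so that the $\Omega$-factors genuinely cancel and the orthogonality relation applies to a clean sum over $\onb(H_0)$. Once the intertwining identity for $s_U$ is established, the claim for $t_U$ follows by the symmetric argument with the left and right module structures interchanged, using the right invariance of $\psi = \varphi \circ R$ in place of the left invariance of $\varphi$ and the expansion $\overline U \odot U = \overline U_{13} U_{23}(P \otimes 1)$.
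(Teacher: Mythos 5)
Your proposal is correct and takes essentially the same approach as the paper's proof: both verify the intertwining relation by pairing against vector functionals and reducing to an identity of matrix coefficients in $L^{\infty}(\mathbb{G})$, using $\Omega_{\overline{U}} = J(\Omega_U^{-1})$, the identification of matrix coefficients of $\overline{U}$ as $R$ applied to those of $U$, the formula $R\left(\Uelt{\xi}{\eta}\right) = \Uelt{\Omega_U^{-1/2}(\eta)}{\Omega_U^{1/2}(\xi)}^*$, $\alpha$-equivariance of $U$, and the orthogonality relation $\sum_{\xi} \Uelt{a}{\xi}\Uelt{b}{\xi}^* = \langle b,a \rangle 1$ coming from unitarity, and both dispatch $t_U$ by the mirror argument (the paper alternatively notes $t_U = s_{\overline{U}}$ with $\overline{\overline{U}} = U$). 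The only blemish is your closing appeal to ``right invariance of $\psi$ in place of left invariance of $\varphi$'': no Haar weight enters this computation at all, so that remark is vacuous, though it does not affect correctness.
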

\begin{proof}
    We show the claim for $s_U$ first. Note that since $\tau_t$ and $R$ commute, a simple calculation shows that $\Omega_{\overline{U}} = J(\Omega_U^{-1})$. Now, fix some $x \in M_0$ and $\eta,\eta' \in H_0$. Also denote $\Uelt{\xi}{\eta}$ for $(\omega_{\xi,\eta} \otimes \id)U$.
    \begin{align*}
        (\overline{\eta \otimes \overline{\eta'}} \otimes 1)(U \odot \overline{U})(s_U(\vect{x}) \otimes 1) =& \; (\overline{\eta \otimes \overline{\eta'}} \otimes 1)U_{13} \overline{U}_{23} \left( \sum_{\xi \in \onb(H_0)} x \cdot \xi \otimes \Omega_{\overline{U}}^{1/2}(\overline{\xi}) \otimes 1 \right) \\
        =& \; \sum_{\xi \in \onb(H_0)} \Uelt{\eta}{x \cdot \xi} \overline{U} \left( \substack{ \overline{\eta'} \\ \Omega_{\overline{U}}^{1/2}(\overline{\xi}) } \right) \\
        =& \; \sum_{\xi \in \onb(H)} \Uelt{\eta}{x \cdot \xi} R\left( \Uelt{ \Omega_{ U }^{ -1/2 } ( \xi  ) }{ \eta' } \right) \\
        =& \; \sum_{\substack{\xi \in \onb(H_0) \\ y \in \onb(\mathcal{F}_0)}} u_{y,x} \Uelt{y^* \cdot \eta}{\xi} \Uelt{\Omega_U^{-1/2}(\eta')}{\xi}^* \\
        =& \; \sum_{y \in \onb(\mathcal{F}_0)} \langle \Omega_U^{-1/2}(\eta') , y^* \cdot \eta \rangle u_{y,x} \\
        =& \; \left( \overline{\eta \otimes \overline{\eta'}} \otimes 1 \right) (s_U \otimes 1)U_0 (\vect{x} \otimes 1)
    \end{align*}
    The claim for $t_U$ can either be handled analogously, or seen to follow from the one for $s_U$ by the fact that $\overline{\overline{U}} = U$ and $t_U = s_{\overline{U}}$.
\end{proof}

\begin{corollary} \label{cor: conjugate equations have bounded solution}
    Let $U$ be any finite type $\alpha$-equivariant representation of $\mathbb{G}$ on a Hilbert-$M$-bimodule $H$, and suppose that there are finitely many orbits $W \in \mathcal{E}$ such that $\lambda_H(1_W) \neq 0$. Then $s_U$ as defined in \ref{def: solution to conjugate equations} is a bounded operator, and hence extends to $L^2(M) \to H \mtimes \overline{H}$. Similarly, if there are finitely many orbits $W \in \mathcal{E}$ such that $\rho_H(1_W^{\op}) \neq 0$, $t_U$ is bounded. In particular, $s_U$ and $t_U$ are bounded whenever $U$ is a finite direct sum of irreducible $\alpha$-equivariant representations.
\end{corollary}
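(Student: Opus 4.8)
The plan is to reduce to the unitary case and then show that $s_U$ is block-diagonal with scalar entries that are constant along orbits. By Lemma \ref{lem: equivalent to unitary} we may assume $U$ is unitary (indeed $s_U$ is only defined in that case, as in definition \ref{def: solution to conjugate equations}). First I would compute the Gram form of $s_U$ explicitly: choosing $\onb(H_0)$ to consist of eigenvectors of $\Omega_U$ and using the identity $\Omega_{\overline{U}}^{1/2}(\overline{\xi}) = \overline{\Omega_U^{-1/2}(\xi)}$ coming from $\Omega_{\overline{U}} = J(\Omega_U^{-1})$ (established inside the proof of Lemma \ref{lem: conjugate equations for unitary finite type}), one finds for $x,y \in M_0$ that
\[
\langle s_U(\vect{x}), s_U(\vect{y}) \rangle = \tr_H\big( \Omega_U^{-1} \lambda_H(y^* x) \big).
\]
Since $y^* x = 0$ unless $x,y$ lie in the same block $M_i$, and since the relation $\Omega_U \lambda_H(x) = \lambda_H(\mu(x)) \Omega_U$ forces $\Omega_U$ to act as $\sigma_i^{-1} \otimes B_i$ on $1_i \cdot H \cong \mathbb{C}^{d_i} \otimes K_i$ for some positive $B_i$ on the multiplicity space, the functional $a \mapsto \tr_H(\Omega_U^{-1} \lambda_H(a))$ restricts on each $M_i$ to a positive multiple $c_i \, \del|_{M_i}$ of the delta-form, with $c_i = \tr_{1_i \cdot H}(\Omega_U^{-1}) / \del(1_i) \in (0,\infty)$. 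Consequently $s_U$ sends the blocks $L^2(M_i)$ into mutually orthogonal subspaces, acting as $\sqrt{c_i}$ times an isometry on each; so $s_U$ is bounded if and only if $\sup_i c_i < \infty$ (the supremum over $i$ in the left support), in which case $\norm{s_U}^2 = \sup_i c_i$.

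The crux is then to prove that $c_i$ is constant along each orbit of $\approx$; granting this, the hypothesis that only finitely many orbits meet the left support immediately gives $\sup_i c_i < \infty$. To establish orbit-constancy I would exploit that $s_U$ intertwines $U_0$ and $V := U \odot \overline{U}$ (Lemma \ref{lem: conjugate equations for unitary finite type}), both unitaries. Fix $x \in M_i$, $y \in M_j$ and $\zeta,\zeta' \in L^2(\mathbb{G})$, and consider
\[
Q := \langle (s_U \otimes 1) U_0(\vect{y} \otimes \zeta), \; s_U(\vect{x}) \otimes \zeta' \rangle .
\]
Expanding $U_0(\vect{y} \otimes \zeta)$ into its $L^2(M_k) \otimes L^2(\mathbb{G})$-components and using block-orthogonality together with the Gram computation collapses the sum to the single block $i$, giving $Q = c_i \langle u_{x,y} \zeta, \zeta' \rangle$. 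On the other hand, rewriting $Q = \langle s_U(\vect{y}) \otimes \zeta, \; (s_U \otimes 1) U_0^*(\vect{x} \otimes \zeta') \rangle$ via the intertwiner relations $(s_U \otimes 1)U_0 = V(s_U \otimes 1)$ and $V^*(s_U \otimes 1) = (s_U \otimes 1) U_0^*$, and running the same collapse on block $j$, gives $Q = c_j \langle u_{x,y} \zeta, \zeta' \rangle$. Hence $c_i = c_j$ whenever $u_{x,y} \neq 0$ for some $x \in M_i$, $y \in M_j$, that is, whenever $i \approx j$. I expect this double evaluation of $Q$ to be the main obstacle, precisely because $s_U$ is a priori unbounded: the point of the argument is that each evaluation only ever involves finitely many blocks at once and the mutual orthogonality of the block-images of $s_U$, so no global unbounded operator (such as $s_U^* s_U$) needs to be manipulated.

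The statement for $t_U$ follows from the one for $s_U$. Since $t_U = s_{\overline{U}}$ and $\lambda_{\overline{H}}(1_W) \neq 0$ exactly when $\rho_H(1_W^{\op}) \neq 0$, the hypothesis that finitely many orbits meet the right support of $U$ is the same as finitely many orbits meeting the left support of $\overline{U}$; applying the result for $s$ to $\overline{U}$ then yields boundedness of $t_U$. Finally, for the last assertion, if $U$ is a finite direct sum of irreducibles then each summand is finite type and unitary by Corollary \ref{cor: split into finite type irreducible unitaries}, and by irreducibility each self-intertwiner $\lambda_H(1_W)$ and $\rho_H(1_W^{\op})$ is either $0$ or the identity; hence each irreducible summand meets exactly one orbit on each side, so $U$ meets only finitely many orbits in both its left and right supports, and both $s_U$ and $t_U$ are bounded by the above.
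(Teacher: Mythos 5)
Your reduction steps and your Gram computation are correct and match the paper's first move: the paper likewise computes $s_U^* s_U \vect{1_i} = c_i \vect{1_i}$ for a block-constant $c_i$, so that boundedness is equivalent to $\sup_i c_i < \infty$, and likewise handles $t_U$ via $t_U = s_{\overline{U}}$ and the irreducible case via the dichotomy for the self-intertwiners $\lambda_H(1_W)$, $\rho_H(1_W^{\op})$. The gap is exactly at the step you flag as the crux, and it is fatal as written. Your second evaluation of $Q$ invokes ``$V^*(s_U \otimes 1) = (s_U \otimes 1)U_0^*$'' as an intertwiner relation, but for a priori unbounded $s_U$ this is \emph{not} a formal consequence of Lemma \ref{lem: conjugate equations for unitary finite type}: one cannot multiply the weakly-interpreted identity $(s_U \otimes 1)U_0 = V(s_U\otimes 1)$ on the right by $U_0^*$, because that means applying the (only weakly defined, unbounded) operator $(s_U\otimes 1)U_0$ to the vector $U_0^*(\vect{x}\otimes\zeta')$, which spreads over infinitely many blocks. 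What \emph{does} follow rigorously from Lemma \ref{lem: conjugate equations for unitary finite type} by taking adjoints is that $V^*(s_U\vect{x}\otimes\zeta')$ lies in the domain of $(s_U\otimes 1)^*$ with $(s_U^*\otimes 1)V^*(s_U\vect{x}\otimes\zeta') = U_0^*(s_U^* s_U\vect{x}\otimes\zeta') = c_i\, U_0^*(\vect{x}\otimes\zeta')$; plugging this licit version into your second evaluation returns $Q = c_i\langle u_{x,y}\zeta,\zeta'\rangle$ — the index $i$ again — so the double evaluation becomes a tautology and yields no relation between $c_i$ and $c_j$. To get $c_j$ you need the termwise expansion $V^*(s_U\vect{x}\otimes\zeta') = \sum_z s_U\vect{z}\otimes u_{x,z}^*\zeta'$ (paired against range-of-$s_U$ vectors), and for unbounded $s_U$ this ``conjugate'' intertwining relation is genuinely new input: it would be immediate if $s_U$ were bounded, which is circular, and it is essentially equivalent to the orbit-constancy you are trying to establish.

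The paper closes this gap by a different mechanism: it transports the constant along the orbit with the averaging map $\Phi$ of Lemma \ref{lem: integrating intertwiners}, i.e.\ it repeats the argument of Lemma \ref{lem: decomposition into irreducibles} for the (blockwise bounded) operator $s_U^*s_U - c_i$, using positivity of $\varphi$ on $u_{1_i,1_j}$-type elements to conclude $\lambda(1_j)\left(s_U^*s_U - c_i\right) = 0$ for all $j \approx i$. Your argument could alternatively be repaired by actually \emph{proving} the conjugate relation through a direct computation mirroring the proof of Lemma \ref{lem: conjugate equations for unitary finite type} — expanding $V^* = \overline{U}_{23}^*U_{13}^*$ on the finite sum $s_U\vect{x}\otimes\zeta'$ and using $S\left(\Uelt{\xi}{\eta}\right) = \Uelt{\eta}{\xi}^*$ together with the formulas for $R$ and $\tau_{-i/2}$ on matrix coefficients — since for fixed $x,\eta,\eta'$ that computation involves only finite sums; but this is an additional lemma you would have to supply, not a formal consequence of the relations you cite.
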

\begin{proof}
    We only show the first claim, since the second is handled analogously. By decomposing into a direct sum, we may assume that there is a unique $W \in \mathcal{E}$ such that $\lambda_H(1_W) = 1$. Now note that for any $i \in W$, we get by direct calculation $s_U^* s_U \vect{1_i} = \del(1_i)^{-1} \tr(\lambda_H(1_i) \Omega_U) \vect{1_i}$. Hence,
    \[
    \lambda_H(1_i) \left( s_U^* s_U - \del(1_i)^{-1} \tr(\lambda_H(1_i) \Omega_U) \right) = 0.
    \]
    Now, repeating the argument from lemma \ref{lem: decomposition into irreducibles}, we find that $s_U^* s_U - \del(1_i)^{-1} \tr(\lambda_H(1_i) \Omega_U) = 0$, and hence $\norm{s_U}^2 = \del(1_i)^{-1} \tr(\lambda_H(1_i) \Omega_U)$, and this value does not depend on the choice of $i \in W$.
\end{proof}

Together, corollaries \ref{cor: split into finite type irreducible unitaries}, \ref{cor: conjugate equations have bounded solution} and lemma \ref{lem: conjugate equations for unitary finite type} show that $\rep_f^{\alpha}(\mathbb{G})$ yields a concrete unitary $2$-category of finite type Hilbert-$M$-bimodules in the sense of \cite[Definition 2.1.13]{Rollier2025}, which abides by the conditions of \cite[Theorem 3.0.1]{Rollier2025}. Before moving on, we show the following final proposition of this section.

\begin{proposition} \label{prop: corep into equivariant corep}
    Let $X$ be any representation of $\mathbb{G}$ on a Hilbert space $H$. Endow $L^2(M) \otimes H \otimes L^2(M)$ with $M$-bimodule structure given by the left- and right module structure of $M$ on the left- and right leg of the tensor product respectively. Then the representation $(U_0)_{1,4}X_{2,4}(U_0)_{3,4}$ of $\mathbb{G}$ on $L^2(M) \otimes H \otimes L^2(M)$ is $\alpha$-equivariant.
\end{proposition}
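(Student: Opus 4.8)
The plan is to verify the two requirements of definition \ref{def: equivariant representation} in turn: first that $W := (U_0)_{1,4} X_{2,4}(U_0)_{3,4}$ is a genuine representation of $\mathbb{G}$, and then that it is $\alpha$-equivariant for the bimodule $K := L^2(M) \otimes H \otimes L^2(M)$, whose left action is $\lambda_K(x) = \lambda(x)_1$ (the left $M$-action on leg $1$) and whose right action is $\rho_K(x^{\op}) = \rho(x^{\op})_3$ (the right $M$-action on leg $3$). Throughout I regard legs $1,2,3$ as the Hilbert-space legs $L^2(M)$, $H$, $L^2(M)$ and leg $4$ as the $L^{\infty}(\mathbb{G})$-leg, and I repeatedly use that operators on disjoint sets of legs commute.

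For the representation property I would apply $\id \otimes \Delta$, splitting the group leg $4$ into legs $4$ and $5$. Since $\id \otimes \Delta$ is a homomorphism, applying the corepresentation identities $(\id \otimes \Delta)U_0 = (U_0)_{1,4}(U_0)_{1,5}$ and $(\id \otimes \Delta)X = X_{2,4}X_{2,5}$ factor-by-factor yields $(U_0)_{1,4}(U_0)_{1,5}X_{2,4}X_{2,5}(U_0)_{3,4}(U_0)_{3,5}$. I would then reorder these six factors into $\bigl((U_0)_{1,4}X_{2,4}(U_0)_{3,4}\bigr)\bigl((U_0)_{1,5}X_{2,5}(U_0)_{3,5}\bigr)$, i.e. $W$ with group-leg in position $4$ times $W$ with group-leg in position $5$, by sliding $(U_0)_{1,5}$ and $X_{2,5}$ leftwards past the factors living on disjoint legs. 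This is precisely the standard fact that a product of corepresentations sharing a common group leg but supported on disjoint Hilbert legs is again a corepresentation, already implicit in the proof of lemma \ref{lem: relative tensor product of representations}.

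The equivariance is where the placement of the module structure makes things clean. For left equivariance, $\lambda_K(x) = \lambda(x)_1$ acts on leg $1$ only and therefore commutes with both $X_{2,4}$ and $(U_0)_{3,4}$, so $W\bigl(\lambda(x)_1\bigr) = (U_0)_{1,4}\,\lambda(x)_1\,X_{2,4}(U_0)_{3,4}$. Now I invoke the left-equivariance of $U_0$ from lemma \ref{lem: U_0 is equivariant}, $(U_0)_{1,4}\lambda(x)_1 = [(\lambda \otimes \id)\alpha(x)]_{1,4}(U_0)_{1,4} = \bigl(\sum_y \lambda(y)_1 (u_{y,x})_4\bigr)(U_0)_{1,4}$; the prefactor is supported only on legs $1$ and $4$, so it pulls all the way to the front, giving $W(\lambda_K(x)\otimes 1) = [(\lambda_K \otimes \id)\alpha(x)]\,W$. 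The right-equivariance is entirely symmetric: $\rho(x^{\op})_3$ commutes with $(U_0)_{1,4}$ and $X_{2,4}$, so I move it next to $(U_0)_{3,4}$ and apply the right-equivariance of $U_0$, namely $\rho(x^{\op})_3(U_0)_{3,4} = (U_0)_{3,4}[(\rho \otimes \id)\alpha^{\op}(x^{\op})]_{3,4}$, to obtain $(\rho_K(x^{\op})\otimes 1)W = W(\rho_K \otimes \id)\alpha^{\op}(x^{\op})$.

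I do not expect any substantial obstacle: the middle tensorand $X_{2,4}$ carries no $M$-action on leg $2$, so it never interacts with the bimodule structure and simply \enquote{passes through} in both equivariance computations, which reduce to the already-established equivariance of $U_0$. The only point requiring genuine care is the leg bookkeeping in the representation-property step, in particular verifying that the factors carrying group-leg $5$ commute with the factors on disjoint Hilbert legs carrying group-leg $4$; this is routine but must be tracked explicitly.
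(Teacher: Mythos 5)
Your proposal is correct and takes essentially the same approach as the paper: the paper's proof likewise commutes $\lambda(x)_1$ past the factors on legs $2,3,4$, invokes the equivariance of $U_0$ from lemma \ref{lem: U_0 is equivariant} so that the prefactor $[(\lambda \otimes \id)\alpha(x)]_{1,4}$ emerges in front, and dismisses right equivariance as symmetric. The only difference is that you also verify the corepresentation property of $(U_0)_{1,4}X_{2,4}(U_0)_{3,4}$ explicitly, which the paper treats as implicit; this is harmless extra rigor, not a different route.
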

\begin{proof}
    Take $x \in M$ arbitrarily, and calculate as follows.
    \begin{align*}
        (U_0)_{1,4}X_{2,4}(U_0)_{3,4}(\lambda_{L^2(M) \otimes H \otimes L^2(M)}(x) \otimes 1) =& (U_0)_{1,4}X_{2,4}(U_0)_{3,4}(\lambda_{L^2(M)}(x) \otimes 1 \otimes 1 \otimes 1) \\
        =& ((U_0) (\lambda_{L^2(M)}(x) \otimes 1))_{1,4}X_{2,4}(U_0)_{3,4} \\
        =& ([(\lambda_{L^2(M)} \otimes \id)\alpha(x)](U_0))_{1,4}X_{2,4}(U_0)_{3,4} \\
        =& [(\lambda_{L^2(M) \otimes H \otimes L^2(M)} \otimes \id)\alpha(x)] (U_0)_{1,4}X_{2,4}(U_0)_{3,4}
    \end{align*}
    Right equivariance can be shown similarly.
\end{proof}

\section{$\mathbb{G}$ as an algebraic quantum group} \label{sc: algebraic quantum group}

\begin{definition} \label{def: O(G)}
    We define $\mathcal{O}(\mathbb{G})$ as the linear span of all $\Uelt{\xi}{\eta}$ where $U$ ranges over $\alpha$-equivariant unitary representations of $\mathbb{G}$ on finite type Hilbert-$M$-bimodules $H$, and $\xi,\eta$ are elements of $H_0 := M_0 \cdot H \cdot M_0$.
\end{definition}

\begin{theorem} \label{thm: O(G) is dense *-subalgebra}
    $\mathcal{O}(\mathbb{G})$ is a strongly dense $*$-subalgebra of $L^{\infty}(\mathbb{G})$. Moreover, $\Delta$ restricts to a coproduct $\mathcal{O}(\mathbb{G}) \to \mathcal{M}(\mathcal{O}(\mathbb{G}) \otimes_{\text{alg}} \mathcal{O}(\mathbb{G}))$, and the pair $(\mathcal{O}(\mathbb{G}), \Delta)$ is an algebraic quantum group in the sense of \cite{KustermansVanDaele97, VanDaele98}, with invariant functionals given by the restriction of $\varphi,\psi$.
\end{theorem}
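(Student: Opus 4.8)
My plan is to establish the four assertions in turn—that $\mathcal{O}(\mathbb{G})$ is a $*$-algebra, that $\Delta$ restricts to it, that $(\mathcal{O}(\mathbb{G}),\Delta)$ is an algebraic quantum group with integrals $\varphi,\psi$, and that it is strongly dense—matching the algebraic structure against the reconstruction of \cite[Theorem 3.0.1]{Rollier2025}, whose categorical hypotheses have just been verified for $\rep_f^\alpha(\mathbb{G})$. For the algebra structure I would proceed as follows. Given finite type unitary equivariant representations $U$ on $H$ and $V$ on $K$, equip $H \otimes K$ with the bimodule structure $\lambda_H \otimes 1$, $1 \otimes \rho_K$; a direct check (as in the proof of Lemma \ref{lem: U_0 is equivariant}) shows $U_{13}V_{23}$ is then $\alpha$-equivariant for this structure, though not of finite type, and for $\xi,\eta \in H_0$, $\zeta,\omega \in K_0$ one has
\[
\Uelt{\xi}{\eta}\,\Velt{\zeta}{\omega} = (\omega_{\xi \otimes \zeta,\, \eta \otimes \omega} \otimes \id)(U_{13}V_{23}).
\]
The crucial point is that $\lambda_H(1_F) \otimes \rho_K(1_{F'}^{\op})$ is a \emph{finite rank} projection whenever $F,F' \subset I$ are finite, because $H$ and $K$ are of finite type; hence the four vectors lie in a single finite dimensional $M_0$-bimodular subspace. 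Decomposing $U_{13}V_{23}$ into finite type subrepresentations by Lemma \ref{lem: decomposition into finite type}, this subspace meets only finitely many summands, so the product is a finite sum of matrix coefficients of finite type equivariant representations, i.e. lies in $\mathcal{O}(\mathbb{G})$. For the $*$-operation I would use the conjugate representation: combining $\overline{U}\left( \substack{\overline{\xi} \\ \overline{\eta}} \right) = R(\Uelt{\eta}{\xi})$ with the formula $R(\Uelt{\xi}{\eta}) = \Uelt{\Omega_U^{-1/2}(\eta)}{\Omega_U^{1/2}(\xi)}^*$ derived before Definition \ref{def: conjugate representation} gives $\Uelt{\xi}{\eta}^* = \overline{U}\left( \substack{\overline{\Omega_U^{1/2}\xi} \\ \overline{\Omega_U^{-1/2}\eta}} \right)$, a matrix coefficient of the finite type representation $\overline{U}$, so $\mathcal{O}(\mathbb{G})$ is $*$-closed.

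Next, the corepresentation identity $(\id \otimes \Delta)U = U_{12}U_{13}$ yields, for an orthonormal basis of $H$ chosen inside $H_0$,
\[
\Delta(\Uelt{\xi}{\eta}) = \sum_{\chi} \Uelt{\xi}{\chi} \otimes \Uelt{\chi}{\eta},
\]
while the antipode and counit act by $S(\Uelt{\xi}{\eta}) = \Uelt{\eta}{\xi}^*$ and $\varepsilon(\Uelt{\xi}{\eta}) = \langle \eta, \xi \rangle$ (the first extending Lemma \ref{lem: modular data on generators}). At this point I would invoke the reconstruction: since $\mathcal{O}(\mathbb{G})$ is exactly the concrete span of matrix coefficients of $\rep_f^\alpha(\mathbb{G})$ and the displayed coproduct agrees with the abstract one, the verification that $\Delta$ descends to a nondegenerate $*$-homomorphism $\mathcal{O}(\mathbb{G}) \to \mathcal{M}(\mathcal{O}(\mathbb{G}) \otimes_{\mathrm{alg}} \mathcal{O}(\mathbb{G}))$ with $\Delta(a)(1 \otimes b), (a \otimes 1)\Delta(b) \in \mathcal{O}(\mathbb{G}) \otimes_{\mathrm{alg}} \mathcal{O}(\mathbb{G})$, the presence of local units, coassociativity, and invertibility of the antipode are precisely the multiplier Hopf $*$-algebra axioms established in \cite[Theorem 3.0.1]{Rollier2025}; these transport to $(\mathcal{O}(\mathbb{G}),\Delta)$ through the identification.

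It then remains to show $\varphi$ and $\psi$ restrict to faithful invariant integrals, and this I expect to be the main obstacle, being the single place where properness is used decisively. Invariance is inherited verbatim from left/right invariance of the Haar weights once $\Delta$ is known to restrict, and faithfulness descends from faithfulness on $L^\infty(\mathbb{G})$. The real work is $\mathcal{O}(\mathbb{G}) \subseteq \operatorname{dom}\varphi \cap \operatorname{dom}\psi$: properness provides $u_{x,y} \in \operatorname{dom}\varphi$, and applying Lemma \ref{lem: integrating intertwiners} with $X = Y = U$ (whose boundedness conclusion rests on Lemma \ref{equivariant endomorphisms of M}) together with the bounded conjugate solutions $s_U,t_U$ of Corollary \ref{cor: conjugate equations have bounded solution} should produce Schur-type orthogonality relations showing $\varphi(\Uelt{\xi}{\eta}^*\Uelt{\xi}{\eta}) < \infty$, i.e. $\mathcal{O}(\mathbb{G}) \subseteq \mathfrak{n}_\varphi$. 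Since $\mathcal{O}(\mathbb{G})$ is idempotent, $\mathcal{O}(\mathbb{G}) = \mathcal{O}(\mathbb{G})^2 \subseteq \mathfrak{n}_\varphi^* \mathfrak{n}_\varphi \subseteq \operatorname{dom}\varphi$, and the statement for $\psi$ follows from $\psi = \varphi \circ R$ and $R(\mathcal{O}(\mathbb{G})) = \mathcal{O}(\mathbb{G})$.

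Finally, for strong density I would use that $U_0$ is the unitary implementation of the faithful (injective) action $\alpha$, so that the von Neumann algebra generated by its right leg—that is, by $\{u_{x,y} : x,y \in M_0\} \subseteq \mathcal{O}(\mathbb{G})$, which are the slices $(\omega_{x,y} \otimes \id)U_0$—is all of $L^\infty(\mathbb{G})$; hence $\mathcal{O}(\mathbb{G})$ is strongly dense. Should the appeal to the unitary implementation be deemed too indirect, Proposition \ref{prop: corep into equivariant corep} offers an alternative, recovering matrix coefficients of an arbitrary representation from those of the associated equivariant one.
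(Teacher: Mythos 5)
Your product and adjoint steps are sound and essentially the paper's own: the paper packages the product as a coefficient of $U \odot Q \odot V$ with $Q$ a finite type subrepresentation of $(U_0)_{13}(U_0)_{23}$, while you decompose $U_{13}V_{23}$ on $H \otimes K$ directly, but both rest on lemma \ref{lem: decomposition into finite type} together with the observation that the decomposing projections are $M$-bimodular, so the finite-dimensional bimodular subspace $1_F \cdot (H \otimes K) \cdot 1_{F'}$ splits along the summands and meets only finitely many of them. (Incidentally, your adjoint formula with $\overline{\Omega_U^{1/2}\xi}$ on top is the consistent one; the paper's displayed version swaps $\xi$ and $\eta$.) The first serious problem is the multiplier Hopf step: you obtain the containments $(a \otimes 1)\Delta(b), \Delta(a)(1 \otimes b) \in \mathcal{O}(\mathbb{G}) \otimes_{\text{alg}} \mathcal{O}(\mathbb{G})$ and the remaining axioms by \enquote{transporting through the identification} with the reconstructed algebraic quantum group $\mathcal{A}$ of \cite[Theorem 3.0.1]{Rollier2025}. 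At this stage, universality gives only a surjection $\mathcal{A} \to \mathcal{O}(\mathbb{G})$; its injectivity is exactly corollary \ref{cor: uniqueness for equivariant tannaka-krein}, which the paper deduces \emph{from} the present theorem, so your appeal is circular. A priori the concrete coefficients $\Uelt{\xi}{\eta}$ could satisfy more relations than the abstract generators $\Aelt{U}{\xi}{\eta}$, and neither the multiplier property of $\Delta$ nor faithfulness of the integrals descends through a quotient. What is needed is the paper's direct argument: the product formula shows $\Uelt{\xi}{\eta}\Velt{\xi'}{\eta'} = 0$ whenever the vectors $\vect{p \otimes q}$ and $\vect{r \otimes s}$ are separated by a finite type $M$-bimodular projection in $\End((U_0)_{13}(U_0)_{23})$, and this kills all but finitely many terms of $\sum_{\chi} \Uelt{\xi}{\chi} \otimes \Uelt{\chi}{\eta}\Velt{\xi'}{\eta'}$.

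The second serious problem is your density argument, which is false as stated: injectivity of $\alpha$ does not make the slices of $U_0$ generate $L^{\infty}(\mathbb{G})$. Take $\mathbb{G}$ compact acting trivially on $M = \mathbb{C}$; this action is proper, $U_0$ is the trivial representation, and its slices are the scalars, while $\mathcal{O}(\mathbb{G})$ is dense only because of all the \emph{other} equivariant representations. The argument you relegate to a fallback is the necessary one: apply proposition \ref{prop: corep into equivariant corep} to the regular representation $X$, split $(U_0)_{14}X_{24}(U_0)_{34}$ by lemma \ref{lem: decomposition into finite type} (bimodularity of the projections keeps the finitely supported vectors finitely supported, so each block contributes coefficients in $\mathcal{O}(\mathbb{G})$, and a slice of the whole representation is a limit of finite sums of these), and then recover the slices of $X$ itself inside the strong closure using $\sum_{i} u_{1_i,1_j} = \del(1_j)1$; note that here the relevant bimodular subspace $1_F \cdot (L^2(M) \otimes L^2(\mathbb{G}) \otimes L^2(M)) \cdot 1_{F'}$ is infinite-dimensional, so one only gets closure statements, which suffice. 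Finally, your treatment of the integrals is only a hope for \enquote{Schur-type orthogonality relations}, and the idempotence shortcut leans again on the circular identification. The paper's argument is short and complete, and you should reproduce something like it: for $\xi = 1_i \cdot \xi \in H_0$, equivariance forces $u_{1_i,1_j}\Uelt{\xi}{\xi} = 0 = \Uelt{\xi}{\xi}u_{1_i,1_j}$ for all but finitely many $j$, whence $\del(1_i)^2 \Uelt{\xi}{\xi} = u_{1_i,p}\Uelt{\xi}{\xi}u_{1_i,p}$ for a suitable central projection $p \in M_0$; properness gives $\varphi(u_{1_i,p}^2) < \infty$, so the right-hand side is a product of two elements that are square-integrable for $\varphi$ and hence lies in the domain of $\varphi$, and polarization finishes.
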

\begin{proof}
    Take $\Uelt{\xi}{\eta}$ and $\Velt{\xi'}{\eta'}$ in $\mathcal{O}(\mathbb{G})$ arbitrarily. Then $\Uelt{\xi}{\eta}\Velt{\xi'}{\eta'} = (U_{13}V_{23}) \left( \substack{\xi \otimes \xi' \\ \eta \otimes \eta'} \right)$. Take $p,q,r,s \in M_0$ central projections such that $\xi \cdot p = \xi$, $q \cdot \xi' = \xi'$, $\eta \cdot r = \eta$, and $s \cdot \eta' = \eta'$. These exist by definition of $\mathcal{O}(\mathbb{G})$. Then by lemma \ref{lem: decomposition into finite type}, we can decompose $(U_0)_{13}(U_0)_{23}$ into finite type subrepresentations by $M$-bimodular endomorphisms. Hence, since $p \cdot L^2(M^2) \cdot q$ and $r \cdot L^2(M^2) \cdot s$ are finite-dimensional spaces, we can find a finite type subrepresentation $Q$ of $(U_0)_{13}(U_0)_{23}$ such that
    \begin{equation}
        (U_{13}V_{23}) \left( \substack{\xi \otimes \xi' \\ \eta \otimes \eta'} \right) = (U \odot Q \odot V) \left( \substack{ \xi \mtimes \vect{p \otimes q} \mtimes \xi' \\ \eta \mtimes \vect{r \otimes s} \mtimes \eta'  } \right) \in \mathcal{O}(\mathbb{G}), \label{eq: O(G) closed under product}
    \end{equation}
    and hence, $\mathcal{O}(\mathbb{G})$ is indeed an algebra. Next, since $\Uelt{\xi}{\eta}^* = \overline{U} \left( \substack{ \overline{\Omega_U^{1/2}(\eta)} \\ \overline{\Omega_U^{-1/2}(\xi)} } \right)$, we get that $\mathcal{O}(\mathbb{G})$ is closed under taking adjoints. We have shown $\mathcal{O}(\mathbb{G})$ to be a $*$-algebra. Applying proposition \ref{prop: corep into equivariant corep} to the left regular representation, and splitting into finite type subrepresentations by lemma \ref{lem: decomposition into finite type}, we get strong density of $\mathcal{O}(\mathbb{G})$ in $L^{\infty}(\mathbb{G})$.

    By the explicit calculation \eqref{eq: O(G) closed under product}, one  finds that $\Uelt{\xi}{\eta} \Velt{\xi'}{\eta'} = 0$ whenever there exist $p,q,r,s \in M_0$ central projections such that firstly $\xi \cdot p = \xi$, $q \cdot \xi' = \xi'$, $\eta \cdot r = \eta$, and $s \cdot \eta' = \eta'$ and secondly there is some finite type $M$-bimodular projection $Q$ in $\End((U_0)_{13}(U_0)_{23})$ for which $\vect{p \otimes q}$ lies in the range of $Q$ and $\vect{r \otimes s}$ lies in the orthogonal complement of this range or vice versa.  Since $(U_0)_{13}(U_0)_{23}$ splits into finite type $\alpha$-equivariant subrepresentations, it follows that $(\mathcal{O}(\mathbb{G}) \otimes 1) \Delta(\mathcal{O}(\mathbb{G}))$ and $(1 \otimes \mathcal{O}(\mathbb{G})) \Delta(\mathcal{O}(\mathbb{G}))$ are contained in $\mathcal{O}(\mathbb{G}) \otimes_{\text{alg}} \mathcal{O}(\mathbb{G})$. Clearly the antipode of $\mathbb{G}$ restricted to $\mathcal{O}(\mathbb{G})$ and the counit given by $\varepsilon \left( \Uelt{\xi}{\eta} \right) = \langle \eta, \xi \rangle$ now turn $(\mathcal{O}(\mathbb{G}), \Delta)$ into a multiplier Hopf-$*$-algebra as in \cite{VanDaele94}.

    It remains to show that the algebra $\mathcal{O}(\mathbb{G})$ lies in the domain of the Haar weights. To this end, fix some $i \in I$, and some $\xi \in H_0$ with $U$ some $\alpha$-equivariant representation of $\mathbb{G}$ on $H$. Then we get that for all but finitely many $j \in I$ $u_{1_i,1_j} \Uelt{\xi}{\xi} = 0$ and $\Uelt{\xi}{\xi} u_{1_i,1_j} = 0$. Therefore, there exists some central projection $p \in M_0$ such that 
    \[
    \del(1_i)^2 \Uelt{\xi}{\xi} = u_{1_i,p} \Uelt{\xi}{\xi} u_{1_i,p}.
    \]
    Now, $u_{1_i,p} \Uelt{\xi}{\xi} u_{1_i,p}$ must lie in the domain of $\varphi$ since $\varphi(u_{1_i,p}^2) < \infty$. Hence, $\Uelt{\xi}{\xi}$ does as well. Now the general statement follows from the fact that
    \[
    \Uelt{\xi}{\eta} = \frac{1}{4} \left( \Uelt{\xi + \eta}{\xi + \eta} - \Uelt{\xi - \eta}{\xi - \eta} + i \Uelt{\xi - i \eta}{\xi - i \eta} - i \Uelt{\xi + i \eta}{\xi + i \eta} \right)
    \]
    and hence $\Uelt{\xi}{\eta}$ must lie in the domain of $\varphi$ for any $\xi,\eta \in H_0$.
\end{proof}

\begin{corollary} \label{cor: uniqueness for equivariant tannaka-krein}
    We may now strengthen the conclusion of \cite[Theorem 3.0.1]{Rollier2025} as follows. Given any discrete quantum space $(M,\del)$, and any two locally compact quantum groups $\mathbb{G}_1, \mathbb{G}_2$ admitting proper actions $\alpha_1,\alpha_2$ on $(M,\del)$ respectively, the following are equivalent.
    \begin{enumerate}
        \item There exists an isomorphism of quantum groups $\pi: L^{\infty}(\mathbb{G}_1) \to L^{\infty}(\mathbb{G}_2)$ such that $(\id \otimes \pi) \circ \alpha_1 = \alpha_2$.
        \item There exists an equivalence of unitary $2$-categories $F: \rep_f^{\alpha_1}(\mathbb{G}_1) \to \rep_f^{\alpha_2}(\mathbb{G}_2)$ and $M$-bimodular unitary isomorphisms $\pi_U: H_U \to H_{F(U)}$, where $H_U$ denotes the Hilbert-$M$-bimodule underlying the representation $U$, such that for any $T \in \mor_{\rep_f^{\alpha_1}(\mathbb{G}_1)}(U,V)$ we get $F(T) = \pi_V \circ T \circ \pi_U^*$.
    \end{enumerate}
    Hence, we may now rightfully speak of an equivariant Tannaka-Krein \underline{\textbf{duality}}.
\end{corollary}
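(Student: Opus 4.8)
The plan is to prove the two implications separately. The direction $1 \Rightarrow 2$ is a routine functoriality check, while $2 \Rightarrow 1$ carries the real content, which I will reduce to the reconstruction of \cite[Theorem 3.0.1]{Rollier2025} combined with the strong density established in Theorem \ref{thm: O(G) is dense *-subalgebra}.

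For $1 \Rightarrow 2$, given a quantum group isomorphism $\pi$ with $(\id \otimes \pi)\circ \alpha_1 = \alpha_2$, I would define $F$ on objects by $F(U) := (\id \otimes \pi)U$, keeping the underlying Hilbert-$M$-bimodule unchanged and taking $\pi_U = \id_{H_U}$, and on $2$-cells by the identity. The relations defining $\alpha_2$-equivariance of $(\id \otimes \pi)U$ follow by applying $\id \otimes \pi$ to those of $U$ and using $(\id \otimes \pi)\circ \alpha_1 = \alpha_2$ together with $(\id \otimes \pi)\circ \alpha_1^{\op} = \alpha_2^{\op}$, the latter holding because $\pi$ intertwines the unitary antipodes. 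Since $\pi$ intertwines the coproducts, $F$ is monoidal for $\odot$, and since $\pi$ is bijective one builds the inverse functor from $\pi^{-1}$; thus $F$ is an equivalence of unitary $2$-categories of the required form, with the required compatibility $F(T) = \pi_V \circ T \circ \pi_U^*$ holding trivially.

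For $2 \Rightarrow 1$, the key observation is that, by Theorem \ref{thm: O(G) is dense *-subalgebra}, for each $k \in \{1,2\}$ the pair $(\mathcal{O}(\mathbb{G}_k),\Delta)$ is exactly the algebraic quantum group reconstructed from $\rep_f^{\alpha_k}(\mathbb{G}_k)$ via \cite[Theorem 3.0.1]{Rollier2025}, realised concretely as a strongly dense subalgebra of $L^\infty(\mathbb{G}_k)$ lying in the domain of both Haar weights. I would transport $F$ to the algebraic level by setting, on matrix coefficients, $\pi(\Uelt{\xi}{\eta}) := F(U)\left(\substack{\pi_U(\xi) \\ \pi_U(\eta)}\right)$. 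Well-definedness, the $*$-homomorphism property, and compatibility with $\Delta$ all follow from the fact that $F$ preserves the categorical structure used in the proof of Theorem \ref{thm: O(G) is dense *-subalgebra}: products arise as coefficients of $U \odot Q \odot V$ (see \eqref{eq: O(G) closed under product}), adjoints as coefficients of conjugates $\overline{U}$, and one has $\Delta(\Uelt{\xi}{\eta}) = \sum_{z \in \onb(H_0)} \Uelt{\xi}{z} \otimes \Uelt{z}{\eta}$. Because $F$ commutes with $\odot$, with conjugation, and with the standard solutions $s_U,t_U$ of the conjugate equations (hence with the categorically determined operators $\Omega_U$), $\pi$ respects each of these and intertwines the invariant functionals $\varphi,\psi$, which are themselves recovered from the same categorical data; this yields an isomorphism of algebraic quantum groups $(\mathcal{O}(\mathbb{G}_1),\Delta) \to (\mathcal{O}(\mathbb{G}_2),\Delta)$.

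It remains to upgrade this to the von Neumann algebraic level and to read off the action-intertwining, and this is where I expect the main obstacle. Since each $\mathcal{O}(\mathbb{G}_k)$ is strongly dense in $L^\infty(\mathbb{G}_k)$ and the locally compact quantum group attached to an algebraic quantum group is unique, $\pi$ extends uniquely to an isomorphism $\pi: L^\infty(\mathbb{G}_1) \to L^\infty(\mathbb{G}_2)$; the delicate point is to verify that the abstract equivalence $F$ genuinely produces a $*$-isomorphism compatible with the modular and analytic structure, so that this extension, as well as the extension to $M \ovtimes L^\infty(\mathbb{G}_k)$, is legitimate, a task largely delegated to \cite[Theorem 3.0.1]{Rollier2025}. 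Finally, to obtain $(\id \otimes \pi)\circ \alpha_1 = \alpha_2$ I would exploit the distinguished role of the unitary implementation: $U_0$ is the monoidal unit of $\rep_f^{\alpha}(\mathbb{G})$, so $F$ identifies $U_0$ for $\alpha_1$ with that for $\alpha_2$, and since $\alpha(x) = \sum_y y \otimes u_{y,x}$ with $u_{y,x}$ the matrix coefficients of $U_0$, preservation of the unit forces the two actions to correspond under $\pi$, completing the argument.
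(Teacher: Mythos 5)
Your overall architecture is the same as the paper's: both directions are routed through the reconstruction theorem \cite[Theorem 3.0.1]{Rollier2025}, with $1 \Rightarrow 2$ a routine transport along $\pi$ (the paper does not even spell this out) and $2 \Rightarrow 1$ obtained by identifying $(\mathcal{O}(\mathbb{G}_k),\Delta)$ with the algebraic quantum group reconstructed from $\rep_f^{\alpha_k}(\mathbb{G}_k)$. The gap is that you assert this identification to be already contained in Theorem \ref{thm: O(G) is dense *-subalgebra}. It is not. That theorem only shows that $\mathcal{O}(\mathbb{G})$ is a strongly dense $*$-subalgebra of $L^{\infty}(\mathbb{G})$ which is an algebraic quantum group; it says nothing about its relation to the universal algebra $\mathcal{A}$ generated by the symbols $\Aelt{U}{\xi}{\eta}$ subject to the unitarity and intertwining relations of $\rep_f^{\alpha}(\mathbb{G})$. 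What comes for free is only a surjective $*$-homomorphism $\mathcal{A} \to \mathcal{O}(\mathbb{G})$, since the concrete coefficients satisfy the universal relations; the whole issue is injectivity. In your formulation this is exactly the well-definedness of
\[
\pi \left( \Uelt{\xi}{\eta} \right) := F(U) \left( \substack{\pi_U(\xi) \\ \pi_U(\eta)} \right):
\]
a priori the concrete coefficients inside $L^{\infty}(\mathbb{G}_1)$ could satisfy linear relations that are \emph{not} consequences of the categorical data (intertwiners, direct sums, tensor products, conjugates), and then the formula above does not descend. Saying that well-definedness "follows from the fact that $F$ preserves the categorical structure" is circular precisely at this point.

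This injectivity is where the paper does its actual work: assuming $\Uelt{\xi}{\eta} = 0$, it builds the finite-rank operator $T$ on $L^2(M) \otimes H \otimes L^2(M)$, forms the $\alpha$-equivariant representation $C = (U_0)_{14}U_{24}(U_0)_{34}$ of proposition \ref{prop: corep into equivariant corep}, and applies the averaging map $\Phi$ of lemma \ref{lem: integrating intertwiners} to obtain a genuine morphism $\Phi(T)$ of the category whose matrix coefficients vanish on all vectors $\vect{r} \otimes \xi \otimes \vect{s}$ but not on $\vect{e} \otimes \eta \otimes \vect{e^*}$; by \cite[Proposition 3.3.8]{Rollier2025} this categorical witness forces $\Aelt{U}{\xi}{\eta} = 0$ in $\mathcal{A}$, so the surjection $\mathcal{A} \to \mathcal{O}(\mathbb{G})$ is an isomorphism (and the Haar functionals match by \cite[Theorem 3.7]{VanDaele98}, not by categorical preservation as you suggest). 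Your remaining steps --- multiplicativity via $\odot$, the $*$-structure via conjugates, extension to $L^{\infty}(\mathbb{G}_k)$ by strong density and uniqueness of the lcqg attached to an algebraic quantum group, and action-intertwining via the monoidal unit $U_0$ --- are all plausible and consistent with how the reconstruction's uniqueness is meant to be used, but they all sit downstream of this missing identification, so as written the proof does not go through.
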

\begin{proof}
    Recall from the reconstruction \cite[Theorem 3.0.1]{Rollier2025} the algebraic quantum group $(\mathcal{A},\Delta)$ with an action on $(M,\del)$ such that its equivariant representation category is $\rep_f^{\alpha}(\mathbb{G})$. The algebra $\mathcal{A}$ was the universal $*$-algebra generated by $\Aelt{U}{\xi}{\eta}$ where $U$ ranges over unitary finite type $\alpha$-equivariant representations in $\rep_f^{\alpha}(\mathbb{G})$, and $\xi,\eta$ are finitely supported vectors in the finite type Hilbert-$M$-bimodules associated to those representations, under the relations that $A_U$ is unitary for every $U$, and that the intertwiners of $\text{Mor}_{\rep_f^{\alpha}(\mathbb{G})}(U,V)$ do indeed intertwine $A_U$ and $A_V$. In particular, there exists a surjective $*$-homomorphism $\mathcal{A} \to \mathcal{O}(\mathbb{G})$ which respects the comultiplications. Suppose now we are given a finite type, unitary $\alpha$-equivariant representation $U$ of $\mathbb{G}$, and vectors $\xi,\eta \in H_0 := M_0 \cdot H \cdot M_0$ such that $\Uelt{\xi}{\eta} = 0$. Take $e \in M_0$ arbitrarily with $\del(e^*e) = 1$, and consider
    \[
    T := \sum_{x,y \in \onb(\mathcal{F}_0)} E_{x \cdot \vect{e} \otimes \eta \otimes \vect{e^*} \cdot y^*,x \cdot \vect{e} \otimes \eta \otimes \vect{e^*} \cdot y^*} \in B(L^2(M) \otimes H \otimes L^2(M)).
    \]
    Let $C := (U_0)_{14}U_{24}(U_0)_{34}$, and $\Phi(T)$ be given by
    \[
    \sum_{z \in \onb(\mathcal{F}_0)} (\id \otimes \varphi) (\lambda_{L^2(M) \otimes H \otimes L^2(M)}(z) \otimes 1) C (T \otimes 1) C^* (\lambda_{L^2(M) \otimes H \otimes L^2(M)}(z^*) \otimes 1)
    \]
    as in lemma \ref{lem: integrating intertwiners}. A simple calculation then shows that $\langle \Phi(T) (\vect{e} \otimes \eta \otimes \vect{e^*}), \vect{e} \otimes \eta \otimes \vect{e^*} \rangle$ is nonzero, while $ \langle \Phi(T) (\vect{r} \otimes \xi \otimes \vect{s}), \vect{r} \otimes \xi \otimes \vect{s} \rangle = 0$ for every $r,s \in M_0$. Using \cite[Proposition 3.3.8]{Rollier2025}, it then follows that $\Aelt{H}{\xi}{\eta} = 0$, such that the $*$-homomorphism $\mathcal{A} \to \mathcal{O}(\mathbb{G})$ is in fact an isomorphism. It follows from \cite[Theorem 3.7]{VanDaele98} that the respective Haar weights of these algebraic quantum groups are also related through this isomorphism.
\end{proof}

\printbibliography[title=Bibliography]

\end{document}